\numberwithin{equation}{section}
\newtheorem{theorem}{Theorem}[section]
\newtheorem*{claim}{Claim}
\newtheorem{proposition}[theorem]{Proposition}
\newtheorem{lemma}[theorem]{Lemma}
\newtheorem{corollary}[theorem]{Corollary}
\newtheorem{Definition}[theorem]{Definition}
\newenvironment{definition}{\begin{Definition}\rm}{\end{Definition}}
\newtheorem{Remark}[theorem]{Remark}
\newenvironment{remark}{\begin{Remark}\rm}{\end{Remark}}
\newtheorem{RHproblem}[theorem]{RH problem}
\newtheorem{Example}[theorem]{Example}
\theoremstyle{definition}
\newtheorem{question}{Question}
\theoremstyle{plain}
\newtheorem*{theoA}{Theorem A}
\newtheorem*{theoB}{Theorem B}
\newcommand{\C}{\mathbb{C}}
\newcommand{\Z}{\mathbb{Z}}
\newcommand{\N}{\mathbb{N}}
\newcommand{\R}{\mathbb{R}}
\newcommand{\Q}{\mathbb{Q}}
\newcommand{\T}{\mathbb{T}}
\newcommand{\LL}{\mathcal L}
\newcommand{\orb}{\text{Orb}}
\newcommand{\aorb}{\overline{\text{Orb}}}
\newcommand{\Span}{{\rm Span} \,}
\newcommand{\Codim}{{\rm Codim} \,}
\renewcommand{\tilde}{\widetilde}
\begin{document}

\title[Hypercyclic subsets]{Hypercyclic subsets}
\author{S. Charpentier, R. Ernst}
\address{St\'ephane Charpentier, 
Institut de Math\'ematiques, UMR 7373, Aix-Marseille Universit\'e, 39 rue F. Joliot Curie, 13453 Marseille Cedex 13, FRANCE}
\email{stephane.charpentier.1@univ-amu.fr}
\address{Romuald Ernst,
LMPA, Centre Universitaire de la Mi-Voix, Maison de la Recherche Blaise-Pascal, 50 rue Ferdinand Buisson, BP 699, 62228 Calais Cedex}
\email{romuald.ernst@math.cnrs.fr}
\keywords{Hypercyclicity, Hypercyclic subset}
\subjclass[2010]{47A16}

\begin{abstract}
	We completely characterize the finite dimensional subsets $C$ of any separable Hilbert space for which the notion of $C$-hypercyclicity coincides with the notion of hypercyclicity, where an operator $T$ on a topological vector space $X$ is said to be $C$-hypercyclic if the set $\{T^nx,\,n\geq 0,\,x\in C\}$ is dense in $X$. We give a partial description for non necessarily finite dimensional subsets. We also characterize the finite dimensional subsets $C$ of any separable Hilbert space $H$ for which the somewhere density in $H$ of $\{T^nx,\,n\geq 0,\,x\in C\}$ implies the hypercyclicity of $T$. We provide a partial description for infinite dimensional subsets. These improve results of Costakis and Peris, Bourdon and Feldman, and Charpentier, Ernst and Menet.
\end{abstract}

\maketitle

\section{Introduction}

At the core of Linear Dynamics is the notion of hypercyclicity. A continuous linear operator $T$ from a topological vector space $X$ into itself is said to be \emph{hypercyclic} if there exists a vector $x$ in $X$ whose orbit $\orb(x,T):=\{T^nx,\,n\geq 0\}$ is dense in $X$.
Such an $x$ is called a hypercyclic vector for $T$. In the following we will assume that $X$ is a \emph{complex} topological vector space.
The translation operator $T_a:f\mapsto f(\cdot -a)$, $a\neq 0$, acting on the Fr\'echet space of entire functions $H(\C)$ is known after Birkhoff as the first example of hypercyclic operator \cite{birkhoff_demonstration_1929}.
Later, natural operators such as the differentiation operator or the dilation of the backward shift were shown by MacLane \cite{maclane_sequences_1952} and Rolewicz \cite{rolewicz_orbits_1969} to be hypercyclic on $H(\C)$ and the space of square summable sequences $\ell^2(\N)$ respectively.
The systematic study of the abstract notion of hypercyclicity became quite active since the early eighties, after Kitai stated a useful criterion for hypercyclicity \cite{kitai_invariant_1982}.
What is now referred to as the Hypercyclicity Criterion, a refinement by Bès \cite{bes_three_1998} of the initial Kitai's one, has been proven by De La Rosa and Read \cite{de_la_rosa_hypercyclic_2009} to be not satisfied by every hypercyclic operator.
Later, Bayart and Matheron \cite{bayart_dynamics_2009} refined De la Rosa and Read's counterexample and exhibited one on any classical Banach spaces, including the Hilbert space.
This result answered a rather long standing open problem posed by Herrero in 1993, which led at the time to some natural questions about the definition of hypercyclicity.
We may state two of them: Does the density of the union of finitely many orbits imply the density of one orbit? Does the somewhere density of one orbit imply its density everywhere? We refer to the very nice books \cite{bayart_dynamics_2009} and \cite{grosse-erdmann_linear_2011} for a quite rich insight about linear dynamics and for (much) more details on these questions.

The rigidity imposed by the linearity of the operators makes these kinds of questions relevant. Indeed Costakis and Peris independently gave a positive answer to the first one \cite{costakis_conjecture_2000,peris_multi-hypercyclic_2001}.
Later on, Le\'on and M\"uller \cite{leon-saavedra_rotations_2004} proved that the density of the orbit
$$\orb(\T x,T):=\left\{\lambda T^nx,\,n\geq 0,\,|\lambda|=1\right\}$$
of the one dimensional uncountable set $\T x:=\{\lambda x,\, |\lambda|=1\}$ under $T$ automatically implies the density of $\orb(x,T)$ itself; a corollary of that result is that $T$ is hypercyclic if and only if $\lambda T$ is for every complex number $\lambda$ of modulus $1$. The proof of Costakis-Peris' result contained the foundations of the stronger and beautiful result by Bourdon and Feldman \cite{bourdon_somewhere_2003} asserting that only the somewhere density of the orbit $\orb(x,T)$ is needed to ensure the hypercyclicity of $x$ for $T$. The proof of Le\'on-M\"uller's result exploited the group structure of the unit circle $\T$, an idea which was then developed and extended to statements in terms of groups and semigroups \cite{conejero_hypercyclic_2007,shkarin_universal_2008,matheron_subsemigroups_2012}, see also \cite[Chapter 3]{bayart_dynamics_2009}.

Recently, Le\'on-M\"uller and Bourdon-Feldman's results have been improved by a complete description of the subsets $\Gamma$ of $\C$ satisfying one of the following two properties \cite{charpentier_-supercyclicity_2016}:\\
\indent{}\textbf{(P)} for every complex Banach space $X$, for every operator $T$ on $X$ and for any $x\in X$,
$$\orb(\Gamma x,T):=\{\gamma T^nx,\,n\geq 0,\,\gamma \in \Gamma\}\text{ is dense in }X\text{ iff }\orb(x,T)\text{ is dense in }X;$$\\
\indent{}\textbf{(P')} for every complex Banach space $X$, for every operator $T$ on $X$ and for any $x\in X$,
$$\orb(\Gamma x,T)\text{ is somewhere dense in }X\text{ iff }\orb(x,T)\text{ is dense in }X.$$
The sets $\Gamma$ satisfying Property \textbf{(P)} turn out to be exactly those which are bounded and bounded away from $0$ (after removing the single point $0$) \cite[Theorem A]{charpentier_-supercyclicity_2016}, and the sets $\Gamma$ satisfying Property \textbf{(P')} are those satisfying \textbf{(P)} and such that the set $\Gamma \T :=\{\gamma \lambda,\,\gamma \in \Gamma,\,|\lambda|=1\}$ is nowhere dense in $\C$ \cite[Theorem B]{charpentier_-supercyclicity_2016}. The notion of $\Gamma$-supercyclicity was introduced: given $\Gamma \subset \C$, an operator $T$ on $X$ is said to be $\Gamma$-supercyclic if there exists $x$ in $X$ such that the orbit $\orb(\Gamma x,T)$ is dense in $X$. $\Gamma$-supercyclicity extends the notions of hypercyclicity and \emph{supercyclicity} - introduced by Hilden and Wallen \cite{hilden_cyclic_1973} - which corresponds to $\Gamma =\C$. Thus \cite[Theorem A]{charpentier_-supercyclicity_2016} says that for some \emph{good} non-empty subsets $C$ of a one dimensional subspace of $X$, possibly open in this subspace, the density in $X$ of the set $\{T^nx,\,n\geq 0,\,x\in C\}$ implies the hypercyclicity of $T$. However neither this result nor Costakis and Peris' one implies the other and we may expect a positive statement covering both multihypercyclicity and $\Gamma$-supercyclicity. The following natural question arises:

\begin{question}\label{Quest1}
Let $x_1,\ldots,x_N$ be a family of pairwise distinct vectors in $X$ and $\Gamma_1,\ldots,\Gamma_N$ be subsets of $\C^*$ which are bounded and bounded away from zero. If $\orb\left(\cup_{i=1}^{N}\Gamma_i x_i,T\right)=\cup_{i=1}^{N}\orb\left(\Gamma_i x_i,T\right)$ is dense in $X$, is some $x_i$ hypercyclic for $T$?
\end{question}
The classical Costakis and Peris' result is a consequence of the Bourdon-Feldman Theorem. According to \cite[Theorem B]{charpentier_-supercyclicity_2016}, subsets of $\C$ of the form $[a,b]\T$ - bounded and bounded away from $0$, but somewhere dense in $\C$ whenever $a<b$ - do not satisfy the property \textbf{(P')}. This result indicates that a positive answer to Question \ref{Quest1} is not likely to be obtained, as for the classical Costakis and Peris' result, as an application of some Bourdon-Feldman type Theorem.
 
Question \ref{Quest1} eventually gives rise to an extended notion of $\Gamma$-supercyclicity for multi-dimensional $\Gamma\subset\C^l$, or even for subsets of spaces of infinite sequences. Without going into details now, we are naturally led to introduce what we call \emph{hypercyclic scalar subsets} (see Paragraph \ref{hyp-sca-sets}) and \emph{Bourdon-Feldman scalar subsets} (see Paragraph \ref{BF-sca-sets}). Roughly speaking, they are those multi-dimensional scalar subsets which satisfy a property similar to \textbf{(P)} and \textbf{(P')} respectively. The following question was already mentioned in \cite{charpentier_-supercyclicity_2016}.
\begin{question}\label{Quest2}
Does there exist a characterization of multi-dimensional hypercyclic scalar subsets and Bourdon-Feldman scalar subsets?
\end{question}

Now, let us briefly come back to the complete description of one dimensional hypercyclic scalar subsets (\emph{i.e.} of subsets of $\C$ satisfying Property \textbf{(P)}) given in \cite{charpentier_-supercyclicity_2016}. It allows for instance to assert that given any fixed separable Banach space $X$ and any fixed non-zero $x\in X$, the density of $\orb\left([1,2]x,T\right)$ in $X$ automatically implies that of $\orb\left(x,T\right)$; but it does not say whether $\aorb\left([0,1]x,T\right)=X$ does \emph{not} imply $\aorb\left(x,T\right)=X$. It only says that there \emph{exist some} $X$, \emph{some} $T\in \LL(X)$ and \emph{some} $x\in X$ such that the previous implication does not hold. This is actually rather unsatisfying as one may prefer an answer to the much more precise questions: Given $X$ and $C\subset X$,
\begin{itemize}\item does the density of $\{T^nx,\,n\geq 0,\,x\in C\}$ in $X$ imply that $T$ is hyperyclic?
\item on the contrary, does there exist a \emph{non-hypercyclic} $T\in \LL(X)$ such that $\{T^nx,\,n\geq 0,\,x\in C\}$ is dense in $X$?
\end{itemize}
Similar questions related to the Bourdon-Feldman Theorem make also sense. Thus the notion of $\Gamma$-supercyclicity is, at least from this point of view, a bit soft and one may want to replace it with a notion which depends on the ambient space.
\begin{definition}\label{def-C-hyp}Let $X$ be a Banach space and $C$ be a subset of $X$. We say that an operator $T$ on $X$ is $C$-hypercyclic %\footnote{The terminology \emph{$\mathcal{A}$-hypercyclic operator} already appeared in \cite{Bes2016} with a different meaning, see also \cite{Bayart2009}. There, $A$ is a family of subsets of $\N$. Thanks to the context, we think that no confusion is possible.}
 if
$$\orb(C,T):=\{T^nx,\,n\geq 0,\,x\in C\}$$
is dense in $X$.
\end{definition}
If $C$ is  a single point, we recover the classical notion of hypercyclicity; if $C$ is a finite union of points, that of multihypercyclicity considered in \cite{costakis_conjecture_2000,peris_multi-hypercyclic_2001}. Given a fixed $x\in X$ and $\Gamma\subset\C$, any $\Gamma x$-hypercyclic operator is in particular $\Gamma$-supercyclic (but a $\Gamma$-supercyclic operator is not necessarily $\Gamma x$-hypercyclic...). Regarding to the previous, the main notions of this paper are the following.
\begin{definition}\label{def-hyp-set}Let $C$ be a subset of a separable Banach space $X$.
\begin{enumerate}\item We say that $C$ is a \emph{hypercyclic subset}\footnote{Not to be confused with the notion of \emph{hypercyclicity set} as introduced in \cite{Bes2016}} if $C\setminus\{0\}$ is non-empty and any $C$-hypercyclic operator on $X$ is hypercyclic.
\item We say that $C$ is a \emph{Bourdon-Feldman subset} if $C\setminus\{0\}$ is non-empty and it satisfies the following property: for any operator $T\in \LL(X)$, the somewhere density in $X$ of the set $\orb(C,T)$ implies that $T$ is hypercyclic.
\end{enumerate}
\end{definition}
The following question - partially posed in \cite{charpentier_-supercyclicity_2016} - completes Questions \ref{Quest1} and \ref{Quest2}.
\begin{question}\label{Quest3}
Does there exist a characterization of hypercyclic and Bourdon-Feldman subsets?  
\end{question}
This direction of research was pursued by several authors in different contexts. Most of them, like Costakis and Peris \cite{costakis_conjecture_2000,peris_multi-hypercyclic_2001}, Le\'on-M\"uller \cite{leon-saavedra_rotations_2004} or Charpentier-Ernst-Menet \cite[Theorem A]{charpentier_-supercyclicity_2016}, give examples of hypercyclic subsets. Also, Feldman \cite{feldman_countably_2003} proved that in the case where $T$ is a weighted backward shift on $\ell^2(\N)$, if $T$ has a bounded set with dense orbit then $T$ is hypercyclic. His result asserts that any bounded set is a hypercyclic subset if we restrict ourselves to weighted shifts on $\ell^2(\N)$. A few papers give non-trivial examples of sets which are not hypercyclic subsets. Among them, it is worth mentioning \cite{feldman_countably_2003} which is interested in the notion of \emph{countably hypercyclic operators}, namely in those $T$ such that $\orb(\{x_n\}_n,T)$ is dense in $X$ for some (infinite) bounded \emph{separated sequence} $(x_n)_n$, where separated means that there exists $\delta >0$ such that $\Vert x_n-x_m\Vert \geq \delta$ for any $n\neq m$. Remark that a countably hypercyclic operator for a separated sequence $(x_n)_{n\in\N}$ is a $C$-hypercyclic operator with $C=\{x_n\}_{n\in\N}$. In \cite{feldman_countably_2003}, the following question was posed.
\begin{question}\label{Quest4}
	Does there exist a countably hypercyclic operator being not hypercyclic?
\end{question}
We recall that by Costakis and Peris' result the answer is no if one only considers finite sequences. At the end of \cite{feldman_countably_2003}, the author mentions that a positive answer to his question was given by Peris in a private communication. For an explicit example solving this question by the positive, see \cite[Exercise 6.3.3]{grosse-erdmann_linear_2011}. Here again, the answer may look a bit disappointing, as it consists in exhibiting a specific Hilbert space, a specific bounded separated sequence $(x_n)_n$, and a non-hypercyclic $T\in \LL(X)$ such that $\orb(\{x_n\}_n,T)$ is dense in $X$. What about \emph{any} fixed bounded separated sequences in \emph{any} Banach spaces and the following very general question?
\begin{question}\label{Quest5}Given a bounded separated sequence $(x_n)_{n\in\N}$ in a separable Banach space $X$, does there \emph{always} exist a countably hypercyclic operator $T$ for $(x_n)_{n\in\N}$ which is not hypercyclic? 
\end{question}

\medskip{}
The purpose of this paper is to attack Questions \ref{Quest1} to \ref{Quest5}. We will obtain a complete answer to Question \ref{Quest1} (Theorem \ref{thm2}) and Question \ref{Quest2} (Theorems \ref{thm-main} and \ref{thm-main-l2}). The proof of Theorem \ref{thm2} will consist in an adaptation of that given by Peris for multihypercyclicity \cite{peris_multi-hypercyclic_2001}. The ``only if'' parts in Theorems \ref{thm-main} and \ref{thm-main-l2}) are partially based on geometric arguments in Hilbert spaces. These results will surprisingly turn out to be very useful in order to obtain, thanks to the fact that (quasi-)conjugacy preserves the dynamical properties of operators, an \emph{almost complete} answer to Question \ref{Quest3} (Theorems~A and B below) when $X$ is a separable Hilbert space. As a corollary of Theorem~A, we will get a positive answer to Question \ref{Quest5} in the Hilbert setting. In particular, it will tell us that the most natural bounded separated sequence in a Hilbert space - namely an orthonormal basis - is not a hypercyclic subset (see Question \ref{Quest5}). In order to state Theorems~A and B, we introduce the following terminology, which will repeatedly appear throughout the paper.

\begin{definition}\label{vector-annulus}We say that a subset $C$ of $X$ is a \emph{vector annulus} if there exist $x$ in $X$ and $0<a\leq b<\infty$ such that
$$C=[a,b]\T x:=\left\{re^{\imath\theta}x,\,a\leq r\leq b, 0\leq\theta\leq 2\pi\right\}.$$
\end{definition}

\begin{theoA}Let $C$ be a subset of a separable Hilbert space $H$.
\begin{enumerate}
\item We assume that $C$ is contained in a finite dimensional subspace of $H$. Then $C$ is a hypercyclic subset if and only if $C\setminus \{0\}$ is non-empty and contained in a finite union of vector annuli.
\item If $C$ is not contained in a finite dimensional subspace of $H$ and if it contains a sequence $(x_n)_n$ of linearly independent vectors satisfying
\begin{equation}\label{eq-codim-main-thm}
\text{Codim}(\overline{\text{Span}}(x_n,\, n\geq 0))=\infty,
\end{equation}
then $C$ is not a hypercyclic subset.
\end{enumerate}
\end{theoA}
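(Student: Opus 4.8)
The proof derives everything, as announced in the introduction, from the scalar characterisations answering Questions~\ref{Quest1} and~\ref{Quest2} (Theorems~\ref{thm2}, \ref{thm-main} and~\ref{thm-main-l2}) together with the invariance of dynamical properties under (quasi-)conjugacy; we may assume $H$ infinite dimensional, the case $\dim H<\infty$ being elementary. A preliminary remark, used repeatedly, is the monotonicity of the notion: if $C\setminus\{0\}\subseteq D$ and $D$ is a hypercyclic subset, then so is $C$, since every $C$-hypercyclic operator is $D$-hypercyclic hence hypercyclic, and adjoining or deleting the point $0$ changes neither $\overline{\orb(C,T)}$ nor the requirement $C\setminus\{0\}\neq\emptyset$. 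For the ``if'' direction of~(1): if $C\setminus\{0\}$ is non-empty and contained in $D=\bigcup_{i=1}^N[a_i,b_i]\T x_i$ with the $x_i$ non-zero, monotonicity reduces us to showing that such a finite union of vector annuli is a hypercyclic subset, which is Theorem~\ref{thm2} applied to the scalar sets $\Gamma_i=[a_i,b_i]\T$ (bounded and bounded away from $0$). Theorem~\ref{thm2} in turn is obtained by adapting Peris's \emph{direct} proof of multihypercyclicity \cite{peris_multi-hypercyclic_2001, costakis_conjecture_2000}; one cannot route it through a Bourdon--Feldman-type statement, as in the classical deduction of the Costakis--Peris result, because an annulus $[a,b]\T$ with $a<b$ is not a Bourdon--Feldman set.

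For the ``only if'' direction of~(1), let $C\subseteq V$ with $\dim V=l<\infty$, $C\setminus\{0\}\neq\emptyset$, and $C$ not contained in a finite union of vector annuli; we must produce a non-hypercyclic $C$-hypercyclic operator on $H$. After replacing $V$ by $\text{Span}\,C$ we may assume $V=\text{Span}\,C$; fix a basis $(v_1,\dots,v_l)$ and let $\widehat C\subseteq\C^l$ be $C$ read in these coordinates. For any $T$ one has $\orb(C,T)=\{T^k(\sum_i\widehat c_i v_i):k\geq0,\ \widehat c\in\widehat C\}$, so $C$-hypercyclicity of $T$ is hypercyclicity of $T$ relative to the $\widehat C$-action on the linearly independent tuple $(v_i)_i$. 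Since the coordinate isomorphism sends vector annuli to vector annuli and annuli covering $C\setminus\{0\}\subseteq V$ may be chosen centred in $V$, the hypothesis means precisely that $\widehat C$ is not contained in a finite union of vector annuli of $\C^l$, so by the ``only if'' part of Theorem~\ref{thm-main} there are a non-hypercyclic operator $S$ on a separable infinite-dimensional Hilbert space $\mathcal H$ and a linearly independent tuple $(w_1,\dots,w_l)\in\mathcal H^l$ with $\{S^k(\sum_i\widehat c_i w_i):k,\widehat c\}$ dense. Taking a bounded invertible $J\colon\mathcal H\to H$ with $Jw_i=v_i$ for all $i$ (combine, across $\mathcal H=\text{Span}(w_i)\oplus\text{Span}(w_i)^\perp$ and $H=V\oplus V^\perp$, a finite-dimensional isomorphism with an isometry of the infinite-dimensional complements) and setting $T=JSJ^{-1}$ gives a non-hypercyclic $C$-hypercyclic operator on $H$; hence $C$ is not a hypercyclic subset.

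For~(2), since $\{x_n\}_n\subseteq C$ and $\orb(\{x_n\}_n,T)\subseteq\orb(C,T)$, it is enough to build a non-hypercyclic $T\in\LL(H)$ with $\orb(\{x_n\}_n,T)$ dense; this is the ``only if'' content of Theorem~\ref{thm-main-l2} for the scalar subset $\{e_n:n\geq0\}\subseteq\ell^2$ (which is manifestly not contained in a finite union of vector annuli), now realised on the prescribed sequence $(x_n)_n$ rather than on coordinate vectors. The hypothesis $\text{Codim}\big(\overline{\text{Span}}(x_n,n\geq0)\big)=\infty$ is exactly what lets the construction underlying Theorem~\ref{thm-main-l2} accept an \emph{arbitrary} linearly independent sequence; combined with a quasi-conjugacy identifying $H$ with $\ell^2$ and routing the $x_n$ into the construction, it yields the required $T$. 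Note that no cheap obstruction is available here: density of $\orb(\{x_n\}_n,T)$ already forces $T$ to have dense range (else $\orb(\{x_n\}_n,T)$ would lie in the union of two closed nowhere dense sets), so one must genuinely manufacture a ``countably hypercyclic, non-hypercyclic'' operator à la Peris's example \cite[Exercise 6.3.3]{grosse-erdmann_linear_2011}.

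The soft parts above are the monotonicity, the coordinate identification and the conjugacy transfers; the real difficulty is concentrated in the negative constructions inside Theorems~\ref{thm-main} and~\ref{thm-main-l2}, where one must build a non-hypercyclic operator carrying a \emph{prescribed}, essentially arbitrary, finite-dimensional (resp.\ countable) set with dense orbit. For~(2) the sequence $(x_n)_n$ need be neither minimal, nor bounded, nor isomorphic to any convenient model, so one cannot merely conjugate a ready-made counterexample onto $H$; the infinite-codimension hypothesis is precisely the geometric room allowing such a sequence to be fitted into the construction while still certifying the resulting operator is not hypercyclic --- a quasi-conjugacy alone would not transport non-hypercyclicity. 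This Hilbert-space-specific construction is where the bulk of the work lies, and is also why~(2) stops at the infinite-codimension case, yielding only an ``almost complete'' answer.
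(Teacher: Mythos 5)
Your proposal is correct and follows essentially the same route as the paper: part (1) combines Theorem \ref{thm2} (for the ``if'' direction) with the ``only if'' part of Theorem \ref{thm-main} transferred by a finite-dimensional conjugacy, and part (2) runs the construction behind Theorem \ref{thm-main-l2} on the coordinate representation of $(x_n)_n$ and conjugates it back to $H$. The one point you leave implicit is the exact role of the codimension hypothesis: the construction's output orthonormal family necessarily spans a subspace of \emph{infinite} codimension (the paper's Remark \ref{rem-codim}), so the partial isometry $\tilde{z}_m \mapsto f_{2m}$ extends to a global isomorphism precisely because $\overline{\Span}(x_n,\,n\geq 0)$ also has infinite codimension.
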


The first part of Theorem~A gives a complete characterization of hypercyclic subsets among finite dimensional subsets of a separable Hilbert space. It also answers Question 6 from \cite{charpentier_-supercyclicity_2016} and provides with a wide class of examples of sets which are not hypercyclic. For example, in the Hilbert setting, a segment joining two linearly independent points, a non-trivial sphere, or a non-empty open set of $X$ is never a hypercyclic subset. The second part of Theorem~A tends to suggest that a hypercyclic subset is necessarily contained in a finite dimensional subspace. Actually Theorem~A, Part 2, can equivalently be stated in terms of \emph{almost overcomplete sequences} (see Section \ref{pf-thmA} for details). Thus Theorem~A does only let open the following question: Are there almost overcomplete sequences in a separable Hilbert space which are hypercyclic subsets? We refer the reader to the last section devoted to open questions. The proof of the second part of Theorem~A, like that of the first one, is based on a construction made in the proof of Theorem \ref{thm-main-l2}. An obstruction occurs when we deal with linearly independent sequences spanning closed subspaces with finite codimension.

Finally, the previous considerations combined with \cite[Theorem B]{charpentier_-supercyclicity_2016} will allow us to obtain an almost complete description of Bourdon-Feldman scalar subsets and, at end, to obtain the following.
\begin{theoB}Let $C$ be a subset of a separable Hilbert space $H$.
\begin{enumerate}
	\item If $C$ is finite dimensional, $C$ is a Bourdon-Feldman subset if and only if $C\setminus\{0\}$ is non-empty and there exist $x_1,\ldots,x_N$ in $X$ and $\Gamma _1,\ldots,\Gamma _N$ subsets of $\C$, with $\Gamma _i\setminus \{0\}$ bounded and bounded away from $0$ and $\Gamma _i\T$ nowhere dense in $\C$ for every $i\in\{1,\ldots,N\}$, such that
	\[
	C\subset \bigcup _{i=1}^N\Gamma _ix_i.
	\]
	\item If $C$ is infinite dimensional and if $C$ contains a sequence $(x_n)_n$ of linearly independent elements satisfying \[\Codim(\overline{\Span}(x_n,\,n\geq 0))=\infty,\]
	then $C$ is not a Bourdon-Feldman subset.
\end{enumerate}
\end{theoB}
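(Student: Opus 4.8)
The plan is to reduce Theorem~B to the combination of Theorem~A, \cite[Theorem~B]{charpentier_-supercyclicity_2016}, and (for the converse direction of Part 1) the machinery built for Theorems \ref{thm-main} and \ref{thm-main-l2}, exploiting that (quasi-)conjugacy preserves both hypercyclicity and somewhere density of $\orb(C,T)$. For Part~2, the argument should be essentially the one already used for Theorem~A, Part~2: if $C$ contains a linearly independent sequence $(x_n)_n$ whose span has infinite codimension, the construction made for Theorem \ref{thm-main-l2} produces a non-hypercyclic operator $T$ for which $\orb(C,T)$ is dense — hence a fortiori somewhere dense — in $H$, so $C$ fails even the weaker Bourdon-Feldman requirement. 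I would simply note that being a hypercyclic subset is formally weaker than being a Bourdon-Feldman subset (density implies somewhere density), so that Theorem~A, Part~2 immediately gives Theorem~B, Part~2 with no extra work.

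For the ``if'' direction of Part~1, assume $C\setminus\{0\}\subset\bigcup_{i=1}^N\Gamma_i x_i$ with each $\Gamma_i$ bounded, bounded away from $0$, and $\Gamma_i\T$ nowhere dense in $\C$. First I would observe that we may take the $x_i$ pairwise non-collinear and nonzero, after merging the pieces lying on a common line (a finite union of sets $\Gamma_i x_i$ on the same one-dimensional subspace is again of this form with the $\Gamma$'s satisfying the same hypotheses, since a finite union of nowhere dense sets is nowhere dense). Then, if $\orb(C,T)$ is somewhere dense, so is $\orb(\bigcup_i\Gamma_i x_i,T)=\bigcup_i\orb(\Gamma_i x_i,T)$; by a Baire argument one of the finitely many closed sets $\overline{\orb(\Gamma_i x_i,T)}$ has nonempty interior, i.e. $\orb(\Gamma_i x_i,T)$ is somewhere dense for some $i$. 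Now I want to invoke \cite[Theorem~B]{charpentier_-supercyclicity_2016}: since $\Gamma_i$ satisfies Property \textbf{(P')}, the somewhere density of $\orb(\Gamma_i x_i,T)$ should force $\orb(x_i,T)$ to be dense, hence $T$ hypercyclic. The subtle point here is that \cite[Theorem~B]{charpentier_-supercyclicity_2016} is stated for $\orb(\Gamma x,T)$ with $\Gamma\subset\C$ acting by scalar multiplication, which is exactly $\orb(\Gamma_i x_i,T)$, so this goes through directly; I would just make sure to state that the hypotheses on $\Gamma_i$ are precisely those characterizing \textbf{(P')} sets in that theorem.

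For the ``only if'' direction of Part~1 — the genuinely new content — suppose $C$ is finite dimensional and is a Bourdon-Feldman subset. Since a Bourdon-Feldman subset is in particular a hypercyclic subset, Theorem~A, Part~1 already tells us $C\setminus\{0\}$ is nonempty and contained in a finite union $\bigcup_{i=1}^N[a_i,b_i]\T y_i$ of vector annuli. It then remains to upgrade each vector annulus constraint: I must show that if some $[a_i,b_i]\T y_i$ with $a_i<b_i$ genuinely appears (equivalently, $C$ contains a full vector annulus that is somewhere dense in its plane, e.g. a piece whose $\Gamma\T$ is somewhere dense in $\C$), then $C$ is \emph{not} a Bourdon-Feldman subset. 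The strategy is to adapt the construction from \cite{charpentier_-supercyclicity_2016} that shows $[a,b]\T$ with $a<b$ fails \textbf{(P')}: one builds an operator on $\C$ (or a suitable finite-dimensional, then infinite-dimensional, extension) for which $\orb([a,b]\T x,T)$ is somewhere dense but $x$ is not hypercyclic, then transplants it to $H$ via a quasi-conjugacy that leaves the remaining annuli in $C$ harmless — more precisely, I would arrange the ambient operator so that $\orb(C,T)$ is somewhere dense while $T$ is not hypercyclic, taking direct sums with mixing operators on complementary subspaces to absorb the other annuli $[a_j,b_j]\T y_j$ and to guarantee somewhere density of the whole union. Here the hypothesis that $H$ is a Hilbert space (so that finite-dimensional subspaces split off orthogonally and the geometric arguments of Theorem \ref{thm-main-l2} apply) is what makes the transplantation clean.

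The main obstacle I anticipate is precisely this last step: carefully packaging the counterexample so that adding the ``bad'' annulus keeps $\orb(C,T)$ only somewhere dense (not dense) and simultaneously ensures $T$ is not hypercyclic, \emph{for every} possible configuration of the finitely many annuli in $C$. The bookkeeping — choosing the right direct-sum decomposition, deciding which annuli to kill and which to exploit, and reconciling ``somewhere dense'' on the bad plane with the behaviour on the rest of $H$ — is where the proof will require real care, and I expect it to lean heavily on the explicit constructions already carried out in the proofs of Theorem \ref{thm-main-l2} and Theorem~A, Part~1 rather than on any new idea.
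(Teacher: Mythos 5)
Your proposal is correct and follows essentially the paper's route: Part 2 is immediate from Theorem~A, since a Bourdon-Feldman subset is a fortiori a hypercyclic subset, and Part 1 reduces by a conjugacy argument to the scalar characterization (Theorem \ref{thm-BF-scalar}), whose ``if'' half is exactly your piecewise appeal to \cite[Theorem~B]{charpentier_-supercyclicity_2016}. The one point worth correcting is that the ``main obstacle'' you anticipate in the ``only if'' direction of Part~1 is illusory: to defeat the Bourdon-Feldman property you only need a single non-hypercyclic $T$ with $\orb(C,T)$ \emph{somewhere} dense, and since $\orb(C,T)\supset\orb(\Gamma_{i_0}x_{i_0},T)$ for the one offending piece (the one whose trace $\Gamma_{i_0}\T$ is somewhere dense in $\C$), it suffices to transplant, by a plain conjugacy sending one vector to $x_{i_0}$, the counterexample $e^{\imath\theta}\oplus B_w$ on $\C\oplus\ell^2(\Z)$ built in the proof of Theorem \ref{thm-BF-scalar}; the remaining annuli need not be ``absorbed'' or controlled in any way, and there is no need to prevent $\orb(C,T)$ from being everywhere dense, since the only requirements are that $T$ be non-hypercyclic and the orbit somewhere dense.
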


We mention that even if our results are stated in complex Banach or Hilbert spaces, they hold as well for real spaces, up to adequate changes. Also those of our results which are given for separable Banach spaces hold for separable Fr\'echet spaces as well. This is in particular the case for Theorem \ref{thm2}.

\medskip{}
The paper is organized as follows. The first section consists in the positive answer to Question \ref{Quest1} (Theorem \ref{thm2}). The second section is devoted to the multi-dimensional notion of $\Gamma$-supercyclicity, that is to Question \ref{Quest2} and the description of hypercyclic scalar subsets and Bourdon-Feldman scalar subsets (Theorems \ref{thm-main}, \ref{thm-main-l2} and \ref{thm-BF-scalar}). The third section deals with Questions \ref{Quest3}, \ref{Quest4} and \ref{Quest5}, which are partially or completely answered in Theorems~A and B. The last section contains some open questions.

\section{A sufficient condition for a set in $X$ to be a hypercyclic subset}

For the notion of hypercyclic subset, we refer to Definition \ref{def-hyp-set}. The following result gives a sufficient condition for a set in a given Banach space $X$ to be a hypercyclic subset.

\begin{theorem}\label{thm2}Let $(x_1,\ldots,x_N)$ be a finite family of vectors in $X$ and let $b\geq 1$. If the set
$$\text{Orb}(\bigcup _{i=1}^N[1,b]\T x_i,T)=\bigcup _{i=1}^N\text{Orb}([1,b]\T x_i,T)$$
is dense in $X$, then some $x_i$ is hypercyclic for $T$.
\end{theorem}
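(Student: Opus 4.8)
The plan is to adapt Peris' proof that multi-hypercyclic operators are hypercyclic \cite{peris_multi-hypercyclic_2001}, carried out with the compact, connected, rotation-invariant sets $C_i:=[1,b]\T x_i$ in place of the single vectors, and then to invoke the one-generator case. After discarding repetitions we may assume the $x_i$ pairwise distinct and non-zero; writing $C=\bigcup_{i=1}^N C_i$, the hypothesis is that $\orb(C,T)$ is dense in $X$, and it is enough to find an index $i$ with $\orb(C_i,T)$ dense in $X$. Indeed $[1,b]\T$ is bounded and bounded away from $0$, hence satisfies Property (P) by \cite[Theorem~A]{charpentier_-supercyclicity_2016}, so the density of $\orb(C_i,T)$ implies that of $\orb(x_i,T)$, i.e. $x_i$ is hypercyclic (equivalently, one may aim at the density of $\orb(\{x_1,\dots,x_N\},T)$ and invoke the Costakis--Peris theorem \cite{costakis_conjecture_2000,peris_multi-hypercyclic_2001}). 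Passing to a minimal sub-family, we may moreover assume that $\orb\!\big(\bigcup_{i\in S}C_i,T\big)$ is not dense for any proper $S\subsetneq\{1,\dots,N\}$, so that it suffices to rule out $N\geq 2$.

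Write $F_i:=\overline{\orb(C_i,T)}$. Each $F_i$ is closed and satisfies $TF_i\subseteq F_i$; since $[1,b]\T$ is a compact set of scalars avoiding $0$, one also has $F_i=[1,b]\T\,\overline{\orb(x_i,T)}$, so $F_i$, and hence its interior $W_i$, is invariant under multiplication by unimodular scalars. As $X=\bigcup_i F_i$ is a Baire space and the family is minimal, every $F_i$ has non-empty interior (if $F_{i_0}$ had empty interior, then the closed set $\bigcup_{j\neq i_0}F_j$ would contain the dense open set $X\setminus F_{i_0}$, hence equal $X$, contradicting minimality). One then runs Peris' confinement argument: using $\bigcup_j F_j=X$ one shows that $\orb(C_i,T)$ cannot escape $W_i$, so $\orb(C_i,T)\subseteq W_i$ and is dense in $F_i=\overline{W_i}$; when this does not already yield $F_1=X$, the argument descends to a strictly smaller family living inside a proper open subset of $X$, exactly as in \cite{peris_multi-hypercyclic_2001}.

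It then remains to upgrade the statement ``$W_1$ is a non-empty open, $\T$-invariant set which $\orb(C_1,T)$ fills densely and never leaves'' to ``$W_1=X$''. This is where the rotation structure enters, in the spirit of León--Müller \cite{leon-saavedra_rotations_2004}: a non-empty open $\T$-invariant set is a union of circles centred at the origin, and, combining this with the connectedness of $X$, the invariance $TF_1\subseteq F_1$, and the fact that $\orb(C_1,T)$ contains the full annuli $[1,b]\T\,T^nx_1$ for all $n\geq 0$, one propagates the density of $\orb(C_1,T)$ from $W_1$ to all of $X$; if this fails, one instead exhibits a proper closed $T$-invariant set carrying $\bigcup_{i\in S}\orb(C_i,T)$ for some proper $S$, contradicting minimality. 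In every case some $\orb(C_i,T)$ is dense, and the reduction above then produces a hypercyclic $x_i$.

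I expect this last upgrade to be the main obstacle. In contrast with the classical multi-hypercyclic case it cannot be shortcut through a Bourdon--Feldman phenomenon: since $[1,b]\T$ is somewhere dense in $\C$ as soon as $b>1$, a set $\orb([1,b]\T x,T)$ may be somewhere dense in $X$ without $x$ being hypercyclic --- this is precisely why $[1,b]$ does not satisfy Property (P'), see \cite[Theorem~B]{charpentier_-supercyclicity_2016}. Thus the proof is forced to use the interplay between $C_1,\dots,C_N$ genuinely, along Peris' Bourdon--Feldman-free lines, and the real work consists in carrying his combinatorial and topological bookkeeping --- the escape dichotomy and the descent to a smaller family over an ambient open set that is no longer $T$-invariant --- through the extra rotation and dilation layers.
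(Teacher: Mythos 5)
Your overall strategy is the paper's: adapt Peris' argument with the annuli $[1,b]\T x_i$ in place of single vectors, reduce to the density of some $\orb([1,b]\T x_i,T)$, and finish with \cite[Theorem A]{charpentier_-supercyclicity_2016}. But as written the proposal is a plan with the two hardest steps left undone, and where you do commit to specifics they do not survive scrutiny. First, the confinement statement ``$\orb(C_i,T)\subseteq W_i$'' is not what the argument yields. If $\gamma T^n x_i$ lands in $\overline{\orb([1,b]\T x_j,T)}$, multiplying by $[1,b]\T/\gamma$ only places the full annulus $[1,b]\T\, T^n x_i$ inside $\overline{\orb([\tfrac{1}{b},b^2]\T x_j,T)}$: the dilation slack is unavoidable, and the orbit of $C_i$ is confined to $\mathrm{int}\bigl(\overline{\orb([\tfrac{1}{b},b^2]\T x_i,T)}\bigr)$, not to $W_i$. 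One must then track how these annuli grow ($[\tfrac1b,b^2]$, $[\tfrac{1}{b^2},b^3]$, $[\tfrac{1}{b^6},b^6]$, \dots) through the whole argument. Second, for the disjointness needed at the end you must know that $\mathrm{int}\bigl(\overline{\orb([\tfrac1c,c]\T x_i,T)}\bigr)$ and $\mathrm{int}\bigl(\overline{\orb([\tfrac{1}{c'},c']\T x_j,T)}\bigr)$ are disjoint for \emph{all} $c,c'\geq 1$, and the minimality you set up (over sub-families of the fixed $C_i$ with the fixed $b$) is too weak to force this; the paper's minimality is over all choices of $N-1$ vectors $y_i$ \emph{and all radii} $c\geq 1$, which is exactly what makes both the disjointness claim and the final contradiction (density of $\orb([\tfrac1c,c]\T x_i,T)$ for a single $i$) bite.

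Third, the ``upgrade'' you flag as the main obstacle is resolved in the paper not by a Le\'on--M\"uller rotation argument or by decomposing $\T$-invariant open sets into unions of circles, but by Peris' span trick, which you do not mention: one first shows $\sigma_p(T^*)=\emptyset$, so every nonzero $p(T)$ has dense range and each $I_{[1,b]}(p(T)x_1,T)$ is non-empty; then $\Span(\orb([1,b]\T x_1,T))\setminus\{0\}=\bigcup_{p\neq 0}\orb([1,b]\T p(T)x_1,T)$ is a connected set covered by the pairwise disjoint open sets $I_{[\frac{1}{b^6},b^6]}(x_i,T)$, hence contained in a single one; since $\overline{\orb([1,b]\T x_1,T)}$ has non-empty interior its span is dense, so $X\subseteq\overline{\orb([\tfrac1c,c]\T x_i,T)}$, contradicting minimality. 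The rotation-invariance of $[1,b]\T$ plays no special role beyond the identity $\overline{\orb(\Gamma x,T)}=\Gamma\,\overline{\orb(x,T)}$ for compact $\Gamma$. So the skeleton is right, but the confinement lemma in its correct (dilated) form, the strengthened minimality, and the span-connectedness endgame are precisely the content of the proof, and they are missing here.
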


We recall that Theorem \ref{thm2} with $b=1$ is a consequence of \cite[Theorem 3.11]{bayart_dynamics_2009}, a semigroup version of the Bourdon-Feldman Theorem. Yet, it is worth saying that we cannot apply a Bourdon-Feldman type Theorem to prove the whole Theorem \ref{thm2}. Indeed, by \cite{charpentier_-supercyclicity_2016}, the orbit of $[1,b]\T x$ under some non-hypercyclic operator $T$, on some Banach space $X$, and for some $x\in X$, may be somewhere dense in $X$ but not everywhere dense (see also Theorem~B). Instead we will generalize the original proof of Costakis-Peris' result, as it is given in \cite{peris_multi-hypercyclic_2001}. For this purpose, we need the following two classical lemmas.
\begin{lemma}\label{lem1}
We keep the previous notations. If the orbit of $\bigcup _{i=1}^N[1,b]\T x_i$ under $T$ is dense in $X$, then the point spectrum $\sigma_p(T^*)$ of the adjoint $T^*$ of $T$ is empty. In particular, for every polynomial $p\neq 0$, $p(T)$ has dense range.
\end{lemma}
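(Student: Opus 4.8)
The plan is to show that a nonzero eigenvector of $T^*$ would force the orbit of $\bigcup_{i=1}^N[1,b]\T x_i$ to avoid a half-space (or rather, to stay within a bounded region in a suitable direction), contradicting density. First I would suppose, for contradiction, that $\phi\in X^*$ is a nonzero functional with $T^*\phi = \lambda\phi$ for some $\lambda\in\C$. Then for every $i$ and every $n\geq 0$, every scalar $r\in[1,b]$ and every $\theta$, we have
\[
\phi\left(re^{\imath\theta}T^nx_i\right) = re^{\imath\theta}\phi(T^nx_i) = re^{\imath\theta}\lambda^n\phi(x_i).
\]
Hence $\left|\phi\left(re^{\imath\theta}T^nx_i\right)\right| = r|\lambda|^n|\phi(x_i)| \leq b|\lambda|^n\max_j|\phi(x_j)|$, while on the other hand, if $|\lambda|\geq 1$ the modulus is at least $|\lambda|^n|\phi(x_i)|$, which grows (or is constant) and so cannot approximate values of $\phi$ with small modulus unless $\phi(x_i)=0$ for all $i$; and if $|\lambda|<1$ the modulus tends to $0$, so the image of the orbit under $\phi$ is bounded, contradicting the surjectivity of $\phi$ onto a dense subset of $\C$. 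In all cases one concludes $\phi(x_i)=0$ for every $i$, whence $\phi$ vanishes on $\orb(\bigcup_i[1,b]\T x_i,T)$, which is dense; thus $\phi=0$, a contradiction. Therefore $\sigma_p(T^*)=\emptyset$.

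For the second assertion, I would argue as follows: if $p\neq 0$ is a polynomial and $p(T)$ does not have dense range, then $\overline{p(T)X}$ is a proper closed subspace, so by Hahn--Banach there is a nonzero $\phi\in X^*$ with $\phi\circ p(T)=0$, i.e. $p(T^*)\phi = 0$ (using $p(T)^* = p(T^*)$). Factoring $p$ over $\C$ as $p(z)=c\prod_{k}(z-\mu_k)$ with $c\neq 0$, the operator $p(T^*)=c\prod_k(T^*-\mu_k I)$ kills the nonzero vector $\phi$; hence some factor $T^*-\mu_k I$ has nontrivial kernel (pick the first index $k$ for which $\prod_{j\leq k}(T^*-\mu_j I)\phi=0$ but $\prod_{j<k}(T^*-\mu_j I)\phi\neq 0$), so $\mu_k\in\sigma_p(T^*)$, contradicting the first part. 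Hence $p(T)$ has dense range.

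The only genuinely delicate point is making the dichotomy on $|\lambda|$ airtight: one must be careful that density of the orbit forces $\phi$ to take \emph{both} arbitrarily large and arbitrarily small (nonzero) values on the orbit, since $\phi(X)$ is a dense — indeed all of, if $\phi\neq 0$ — subset of $\C$. Once it is observed that $|\phi(re^{\imath\theta}T^nx_i)|$ depends only on $r|\lambda|^n|\phi(x_i)|$ and that $r$ ranges over the \emph{bounded} set $[1,b]$, the sequence $(|\lambda|^n)_n$ is either bounded away from $0$ from some point on, or bounded above, and in either configuration it cannot produce values near both $0$ and $\infty$; this is where the argument closes. The rest is routine linear algebra and a Hahn--Banach separation.
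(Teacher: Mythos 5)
Your proof is correct and follows essentially the same route as the paper, which simply defers to the cited lemmas of Peris and of Charpentier--Ernst--Menet whose arguments are exactly this: an eigenfunctional $\phi$ of $T^*$ sends the orbit of $\bigcup_i[1,b]\T x_i$ into the set $\{re^{\imath\theta}\lambda^n\phi(x_i)\}$, whose moduli cannot be dense in $[0,\infty)$ because $[1,b]$ is bounded and bounded away from $0$, followed by the standard Hahn--Banach factorization argument for dense range of $p(T)$. No gaps; the dichotomy on $|\lambda|$ is handled adequately (including $\lambda=0$, which falls under the bounded-image case).
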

\begin{proof}[Proof of Lemma \ref{lem1}]The proof of this lemma is an easy combination of that of \cite[Lemma 1]{peris_multi-hypercyclic_2001} and \cite[Lemma 3.5]{charpentier_-supercyclicity_2016}.
\end{proof}
\begin{lemma}\label{lem2}Let $b\geq 1$ and $x,y\in X$. Either the interior of the closure of the orbits of $[1,b]\T x$ and of $[1,b]\T y$ under some operator $T$ on $X$ do not intersect, or we have
\begin{multline*}\text{int}\left(\overline{\text{Orb}}([1,b]\T x,T)\right)\subset \text{int}\left(\overline{\text{Orb}}([\frac{1}{b},b^2]\T y,T)\right)\\
\text{and }\text{int}\left(\overline{\text{Orb}}([1,b]\T y,T)\right)\subset \text{int}\left(\overline{\text{Orb}}([\frac{1}{b},b^2]\T x,T)\right).
\end{multline*}
\end{lemma}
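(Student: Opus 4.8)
The statement to prove is Lemma~\ref{lem2}, a dichotomy for the interiors of closures of orbits of two vector annuli $[1,b]\T x$ and $[1,b]\T y$ under a fixed operator $T$. The natural approach is the one used by Peris in the multihypercyclic case (where $b=1$), adapted to cope with the radial thickening by the factor $b$. First I would set $U=\mathrm{int}(\overline{\mathrm{Orb}}([1,b]\T x,T))$ and $V=\mathrm{int}(\overline{\mathrm{Orb}}([1,b]\T y,T))$ and assume $U\cap V\neq\emptyset$; the goal is then to show $U\subset \mathrm{int}(\overline{\mathrm{Orb}}([\tfrac1b,b^2]\T y,T))$ (and symmetrically). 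Pick a point $z\in U\cap V$. Since $z\in V$ and $V$ is the interior of the closure of $\mathrm{Orb}([1,b]\T y,T)$, there is a sequence $n_k\to\infty$ and scalars $\gamma_k\in[1,b]\T$ with $\gamma_k T^{n_k} y\to z$; similarly, because the orbit of $[1,b]\T x$ is dense \emph{near} $z$, one approximates points of $U$ by iterates $\beta_j T^{m_j} x$. The idea is to transfer: a point $w\in U$ is a limit of $\beta_j T^{m_j}x$; one wants to replace $T^{m_j}x$ by something comparable to $T^{m_j+n_k}y$ up to bounded scalars.

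Concretely, the key step is: given $w\in U$, since iterates of $[1,b]\T x$ come arbitrarily close to $w$, and since (taking $z$ in the common interior) iterates $\gamma_k T^{n_k}y$ approximate $z$ with $z$ itself approximable by iterates $\beta T^m x$ of $[1,b]\T x$, one can chain the two approximations. More precisely, I would argue that for a given $w\in U$ and $\varepsilon>0$, first choose $m,\beta\in[1,b]\T$ with $\|\beta T^m x - w\|<\varepsilon$; then choose, using $z\in U$ as well, indices and scalars so that $T^m x$ is itself close to a scalar multiple $\mu T^{m+n} y$ of a high iterate of $y$ — this is where one uses that $z\in\overline{\mathrm{Orb}}([1,b]\T y,T)$ forces the forward iterates of $y$ to approximate, up to a scalar in $[1,b]\T$, the forward iterates of $x$ (after applying $T^m$ to the relation $\gamma T^n y\approx z\approx \beta' T^{m'} x$ and using continuity/density appropriately). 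Multiplying the bounded scalars $\beta$ and $\mu$ together gives a scalar whose modulus lies in $[\tfrac1b,b^2]$ and whose argument is arbitrary, hence $w$ is approximated by an element of $\mathrm{Orb}([\tfrac1b,b^2]\T y,T)$. Doing this for all $w$ in a neighborhood of a point of $U$ shows $U\subset\overline{\mathrm{Orb}}([\tfrac1b,b^2]\T y,T)$, and since $U$ is open, $U\subset\mathrm{int}(\overline{\mathrm{Orb}}([\tfrac1b,b^2]\T y,T))$.

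The bookkeeping with the circle factor $\T$ needs care: when we write $\gamma T^n y\to z$ with $\gamma\in[1,b]\T$, the modulus $|\gamma|\in[1,b]$ and we have no control over which value it takes, nor over its limiting behavior, so composing two such relations only lets us say the product of moduli lies in $[1/b,b^2]$ (one factor from a multiplier in $[1,b]$, one from the inverse of a multiplier in $[1,b]$); the arguments, being free on the full circle, cause no loss. One must also ensure the approximating iterate of $y$ has index tending to infinity so that we genuinely land in the forward orbit — this follows from combining the density of $\mathrm{Orb}([1,b]\T y,T)$ near $z$ with the fact that the relation is applied after $T^m$ for arbitrarily large $m$, or alternatively by the standard argument that $\overline{\mathrm{Orb}}$ near a point forces infinitely many large iterates to return near it.

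**Main obstacle.** The delicate point is the transfer step — rigorously passing from ``$z$ is in the closure of both orbits'' to ``the $x$-iterates and $y$-iterates shadow each other up to bounded scalars near every point of $U$'' — and in particular controlling that the indices involved go to infinity and that the accumulated scalar multipliers stay trapped in $[1/b,b^2]$ rather than degenerating. This is exactly the place where \cite{peris_multi-hypercyclic_2001} does the analogous argument for $b=1$; here the annulus structure forces one to keep separate track of moduli and arguments throughout, and to verify that applying $T^m$ to an approximate relation $\gamma T^n y\approx \beta T^{m'}x$ preserves the approximation well enough (which uses continuity of $T^m$ only for fixed $m$, so one fixes $m$ first and chooses the rest afterwards). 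I expect everything else to be routine topology of interiors and closures.
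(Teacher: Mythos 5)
Your overall strategy (assume the interiors meet, transfer the $x$-orbit into the closure of a slightly fattened $y$-orbit, and keep track of the moduli landing in $[\frac1b,b^2]$) is the right one, and your scalar bookkeeping is correct; but the transfer step as you describe it has a genuine gap. You take a point $z$ in the common interior, approximate it simultaneously by $\beta' T^{m'}x$ and by $\gamma T^{n}y$, deduce the approximate identity $T^{m'}x\approx(\gamma/\beta')T^{n}y$, and then apply a power $T^{k}$ of the operator to propagate this along the orbit. The error in that identity is amplified by up to $\Vert T\Vert^{k}$, and the exponent $k$ you need (to reach the iterate $\beta T^{m}x$ approximating a given $w\in U$, with $m=m'+k$) is only determined \emph{after} you have fixed the accuracy $\delta$ of the approximation at $z$, while $m'$ itself depends on $\delta$. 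So the quantifiers cannot be ordered consistently: you cannot "fix $m$ first and choose the rest afterwards", because the relation you obtain by applying $T^{k}$ concerns $T^{m'+k}x$, with $m'$ outside your control, and making the amplified error small would require $\delta<\varepsilon/\Vert T\Vert^{k}$ for a $k$ not yet known. This is not a presentational issue; the approximation-chaining argument genuinely does not close.

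The missing idea — and the way the paper's proof avoids all error propagation — is to upgrade the approximate relation at $z$ to an \emph{exact} one. The set $I_{[1,b]}(x,T)\cap I_{[1,b]}(y,T)$ is a nonempty open subset of $\overline{\text{Orb}}([1,b]\T x,T)$, hence it contains an actual orbit point $\gamma T^{n_1}x$ with $\gamma\in[1,b]\T$; this point therefore lies \emph{exactly} in $\overline{\text{Orb}}([1,b]\T y,T)$. Multiplying this single exact membership by $[1,b]\T/\gamma$ and using that $\overline{\text{Orb}}(\Gamma u,T)=\Gamma\,\overline{\text{Orb}}(u,T)$ for compact $\Gamma\subset\C$ gives $[1,b]\T\,T^{n_1}x\subset\overline{\text{Orb}}([\frac1b,b^2]\T y,T)$; since the orbit closure is mapped into itself by $T$, the entire tail $\text{Orb}([1,b]\T T^{n_1}x,T)$ sits inside $\overline{\text{Orb}}([\frac1b,b^2]\T y,T)$ with no estimates needed. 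The finitely many discarded initial sets $[1,b]\T\{T^{n}x\}$, $n<n_1$, are nowhere dense, so removing them does not change the interior of the closure, and the inclusion of interiors follows. I would encourage you to rewrite your argument around this exact-membership observation; everything else in your outline then becomes routine.
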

\begin{proof}[Proof of Lemma \ref{lem2}]Observe first that if $\Gamma \subset \C$ is compact, then
$$\overline{\text{Orb}}(\Gamma x,T)=\Gamma\overline{\text{Orb}}(x,T).$$
Here and after, we will use the notation
$$I_{[1,b]}(x,T)=\text{int}\left(\overline{\text{Orb}}([1,b]\T x,T)\right).$$
Let assume that $I_{[1,b]}(x,T)$ and $I_{[1,b]}(y,T)$ do intersect. Because they are open, it implies for example that there exist $\gamma \in [1,b]\T$ and $n_1 \in \N$ such that
$$\gamma T^{n_1}x \in \overline{\text{Orb}}([1,b]\T y,T).$$
Multiplying each side of the previous by $[1,b]\T /\gamma$, we get
$$[1,b]\T\{T^{n_1}x\} \subset \frac{1}{\gamma}\overline{\text{Orb}}([1,b^2]\T y,T)\subset \overline{\text{Orb}}([\frac{1}{b},b^2]\T y,T),$$
hence
$$I_{[1,b]}(x,T) = \text{int}\left(\overline{\text{Orb}([1,b]\T x,T)\setminus [1,b]\T\{T^nx,\,n=0,\ldots,n_1-1\}}\right)\subset I_{[\frac{1}{b},b^2]}(y,T),$$
where the first equality holds because $[1,b]\T\{T^nx,\,n=0,\ldots,n_1-1\}$ has empty interior in $X$. The other inclusion is obtained in the same way.
\end{proof}

We are now ready to prove Theorem \ref{thm2}.
\begin{proof}[Proof of Theorem \ref{thm2}]
First observe that up to taking $b$ even bigger, we may suppose that the vectors $x_1,\ldots,x_N$ are pairwise independent.	
Let us first assume that $N$ is \emph{minimal} in the sense that
\[X=\bigcup _{i=1}^N\overline{\text{Orb}}([1,b]\T x_i,T)\quad \text{but}\quad \bigcup _{i=1}^{N-1}\overline{\text{Orb}}([1,c]\T y_i,T)\neq X,\]
for any $y_i \in X$, $i=1,\ldots,N-1$, and any $c\geq 1$. Then, we are going to prove that $N=1$.
Suppose that this has already been proven for a while. Then, if $N$ is not minimal, there exists $1\leq M<N$, $a\geq b\geq1$ and $z_1,\ldots,z_M$ linearly independent such that:
\[X=\bigcup _{i=1}^M\overline{\text{Orb}}([1,a]\T z_i,T)\quad \text{but}\quad \bigcup _{i=1}^{M-1}\overline{\text{Orb}}([1,c]\T y_i,T)\neq X,\]
for any $y_i \in X$, $i=1,\ldots,N-1$, and any $c\geq 1$.
Thus, by assumption, $M$ has to be equal to one. Then, by \cite[Theorem A]{charpentier_-supercyclicity_2016}, $z_1$ is hypercyclic for $T$, and thus $I_{[1,a]}(z_1,T)\cap I_{[1,a]}(x_i,T)\neq \emptyset$ for some $1\leq i\leq N$. Then, by Lemma \ref{lem2}, $I_{[1,a]}(z_1,T)\subset I_{[1/a,a^2]}(x_i,T)$. By \cite[Theorem A]{charpentier_-supercyclicity_2016} again, it follows that $x_i$ is hypercyclic for $T$ and the proof is done.

Thus, from now on, we assume that $N$ is minimal and, by contradiction, that $N> 1$. We claim that this implies the following two assertions:
\begin{enumerate}
\item $I_{[1,b]}(x_i,T)\neq \emptyset$ for every $i=1,\ldots ,N$;
\item For every $c,c'\geq 1$ and any $i\neq j$,
\begin{equation}\label{eqthm2}I_{[1/c,c]}(x_i,T)\cap I_{[1/c',c']}(x_j,T) =\emptyset.
\end{equation}
\end{enumerate}
Indeed, first, if $I_{[1,b]}(x_1,T)=\emptyset$ for example, then
$$\bigcup _{i=1}^N\overline{\text{Orb}}([1,b]\T x_i,T)=\bigcup _{i=2}^N\overline{\text{Orb}}([1,b]\T x_i,T)$$
which contradicts the minimality of $N$ and gives (1). Second, assume for instance that $I_{[1/c,c]}(x_1,T)$ intersects $I_{[1/c',c']}(x_2,T)$ for some $c,c'\geq 1$. Without loss of generality, we may suppose that $c'\geq b$. If $c'\geq c$, then $I_{[1/c',c']}(x_1,T)$ and $I_{[1/c',c']}(x_2,T)$ do intersect too, and so do $I_{[1,c'^2]}(x_1,T)$ and $I_{[1,c'^2]}(x_2,T)$; by Lemma \ref{lem2}, $I_{[1,c']}(x_1,T)\subset I_{[1,c'^2]}(x_1,T)\subset I_{[1/c'^2,c'^4]}(x_2,T)$ hence, since $c'\geq b$,
$$X=\bigcup _{i=1}^N\overline{\text{Orb}}([1,b]\T x_i,T)\subset \bigcup _{i=3}^N\overline{\text{Orb}}([1,c']\T x_i,T)\cup \overline{\text{Orb}}([1/c'^2,c'^4]\T x_2,T).$$
Up to a dilation by some positive number, we get another contradiction to the minimality of $N$. If $c'<c$, we just interchange the roles of $x_1$ and $x_2$.

\medskip{}
Let us now observe that if $x\in X$ is such that $I_{[1,b]}(x,T)\neq \emptyset$, then by Lemma \ref{lem2}, there exists some $i\in \{1,\ldots,N\}$ such that
\begin{equation}\label{eq2thm2}I_{[1,b]}(x_i,T)\subset I_{[\frac{1}{b},b^2]}(x,T).
\end{equation}
We infer that the latter inclusion implies the following one:
\begin{equation}\label{eq3}
\text{Orb}([1,b]\T x,T)\subset I_{[\frac{1}{b},b^2]}(x,T).
\end{equation}
Otherwise, by \eqref{eq2thm2}, $\text{Orb}([1,b]\T x,T)\not\subset I_{[1,b]}(x_i,T)$ and there must exist $\gamma \in [1,b]$ and $n \in \N$ such that $\gamma T^{n}x \in \overline{\text{Orb}}([1,b]\T x_j,T)$ for some $j\neq i$. Proceeding as in the proof of Lemma \ref{lem2}, it follows that
$$I_{[\frac{1}{b},b^2]}(x,T)\subset I_{[\frac{1}{b^2},b^3]}(x_j,T),$$
hence, by \eqref{eq2thm2} again,
\begin{equation}
I_{[1,b]}(x_i,T)\subset I_{[\frac{1}{b^2},b^3]}(x_j,T) \subset I_{[\frac{1}{b^3},b^3]}(x_j,T),\label{eql}
\end{equation}
which contradicts \eqref{eqthm2}.

\medskip{}
Moreover, Lemma \ref{lem1} yields, for any non-zero polynomial $p$,
$$X=\bigcup _{i=1}^N\overline{p(T)\left(\overline{\text{Orb}}([1,b]\T x_i,T)\right)}=\bigcup _{i=1}^N\overline{\text{Orb}}([1,b]\T p(T)(x_i),T),$$
so that, by minimality of $N$, $I_{[1,b]}(p(T)(x_i),T)\neq \emptyset$ for every $i=1,\ldots,N$. Therefore
\begin{eqnarray*}
\Span(\text{Orb}([1,b]\T x_1,T))\setminus \{0\} & = & \bigcup _{p\neq 0}\orb([1,b]\T p(T)(x_1),T)\\
& \subset & \bigcup _{p\neq 0}I_{[\frac{1}{b},b^2]}(p(T)(x_1),T)\quad \text{ by (\ref{eq3})}\\
& \subset & \bigcup _{p\neq 0}I_{[1,b^3]}(\frac{p(T)(x_1)}{b},T)\\
& \subset & \bigcup _{i=1}^N I_{[\frac{1}{b^3},b^6]}(x_i,T)\quad \text{ by Lemma \ref{lem2}}\\
& \subset & \bigcup _{i=1}^NI_{[\frac{1}{b^6},b^6]}(x_i,T).\\
\end{eqnarray*}
Since $\Span(\text{Orb}([1,b]\T x_1,T))\setminus \{0\}$ is connected and since, by (\ref{eqthm2}),
$$I_{[\frac{1}{b^6},b^6]}(x_i,T)\cap I_{[\frac{1}{b^6},b^6]}(x_j,T)=\emptyset,$$
for any $i\neq j$, it follows that there exists $1\leq i\leq N$ such that
$$\Span(\text{Orb}([1,b]\T x_1,T))\setminus \{0\}\subset I_{[\frac{1}{c},c]}(x_i,T)$$
for some $c\geq b$. Finally, since $\overline{\text{Orb}}([1,b]\T x_1,T)$ has non-empty interior by minimality of $N$, we get
\begin{eqnarray*}X & = & \Span\left(\overline{\text{Orb}}([1,b]\T x_1,T)\right)\\
& \subset & \overline{\Span\left(\text{Orb}([1,b]\T x_1,T)\right)}\\
& \subset & \overline{\text{Orb}}([1/c,c]\T x_i,T).
\end{eqnarray*}

\noindent{}We conclude that the set $\text{Orb}([1/c,c]\T x_i,T)$ is dense in $X$, which contradicts the minimality of $N$.
\end{proof}

\section{Characterizations independent of the ambient space}\label{Sec1}

\subsection{Hypercyclic scalar subsets}\label{hyp-sca-sets}

In this paragraph we extend the formalism and the main result of \cite{charpentier_-supercyclicity_2016} to subsets $\Gamma$ of $\ell^2(\N)$. For reasons which will become clear later, we will distinguish the case where $\Gamma\subset \C^l$, $l$ finite, from that where $\Gamma\subset \ell^2(\N)$.

\subsubsection{Hypercyclic scalar subsets of $\C^l$}We first introduce the formalism for subsets $\Gamma$ of $\C^l$, $l$ finite. Let $l\geq 1$ and $e_1,\ldots,e_l$ be the canonical basis of $\C^l$. Let also $X$ be a separable Banach space and let $x_1,\ldots,x_l \in X$ be a linearly independent family. For $\Gamma %\left\{\gamma=\sum_{i=1}^n\gamma_ie_i\right\}
\subset \C^l$, we denote by $\Gamma _{x_1,\ldots ,x_l}$ the set
$$\Gamma _{x_1,\ldots ,x_l}=\left\{\sum _{i=1}^l\gamma_ix_i;\,\sum_{i=1}^l\gamma_ie_i \in \Gamma\right\}.$$
The notion of $\Gamma$-supercyclicity introduced in \cite{charpentier_-supercyclicity_2016} naturally extends as follows.
\begin{Definition}\label{def-Gamma-hyp}Let $\Gamma \subset \C^l$, $l\geq 1$. We say that $T\in \LL(X)$ is $\Gamma$-supercyclic if $T$ is $\Gamma _{x_1,\ldots,x_l}$-hypercyclic in the sense of Definition \ref{def-C-hyp}, for some linearly independent family $x_1,\ldots,x_n$ of vectors in $X$, \emph{i.e.} if the set
$$\text{Orb}(\Gamma _{x_1,\ldots,x_l},T)$$
is dense in $X$.
\end{Definition}
When $\Gamma =\C$, we recover the notion of \emph{supercyclic operator} \cite{hilden_cyclic_1973}; more generally, when $\Gamma =\C^l$, $l\geq 1$, that of \emph{$l$-supercyclic operator} \cite{feldman_$n$-supercyclic_2002}. If $\Gamma \subset \C$ is not empty and not reduced to $\{0\}$, any hypercyclic operator is trivially $\Gamma$-supercyclic. The fact that this holds for $\Gamma \subset \C^l$, $\Gamma\setminus \{0\}\neq \emptyset$, with $l>1$ requires a brief explanation.
\begin{proposition}\label{prop-hyp-Gammahyp}Let $\Gamma \subset \C^l$, $l\geq 1$, be such that $\Gamma \setminus \{0\}$ is non-empty. Then, any hypercyclic operator is $\Gamma$-supercyclic.
\end{proposition}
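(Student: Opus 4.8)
The plan is to reduce everything to the scalar case $\Gamma\subset\C$ treated in \cite{charpentier_-supercyclicity_2016}, or rather to a direct elementary argument. Since $\Gamma\setminus\{0\}$ is non-empty, pick a vector $\gamma=(\gamma_1,\dots,\gamma_l)\in\Gamma$ with $\gamma\neq 0$; relabelling, assume $\gamma_1\neq 0$. Let $T\in\LL(X)$ be hypercyclic and let $x$ be a hypercyclic vector for $T$. The idea is to build a linearly independent family $x_1,\dots,x_l$ such that $\gamma_1 x_1+\dots+\gamma_l x_l$ is hypercyclic for $T$; then the single point $\gamma=\sum_i\gamma_i e_i\in\Gamma$ already witnesses that $\orb(\Gamma_{x_1,\dots,x_l},T)$ is dense, so $T$ is $\Gamma$-supercyclic.

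First I would set $x_1=\gamma_1^{-1}x$ and then choose $x_2,\dots,x_l$ so that $x_1,\dots,x_l$ are linearly independent \emph{and} $\gamma_2 x_2+\dots+\gamma_l x_l=0$, i.e. simply take $x_2,\dots,x_l$ to be any vectors completing $\{x\}$ to a linearly independent family and subject to the single linear constraint $\sum_{i\geq 2}\gamma_i x_i=0$. Concretely, if some $\gamma_j\neq 0$ with $j\geq 2$ one can solve for $x_j$ in terms of the others; if $\gamma_2=\dots=\gamma_l=0$ the constraint is vacuous and any completion works. Since $X$ is infinite dimensional (it carries a hypercyclic operator), there is always enough room to pick such a linearly independent completion avoiding the finitely many linear relations that would force dependence. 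With this choice, $\sum_{i=1}^l\gamma_i x_i=\gamma_1 x_1=x$, which is hypercyclic for $T$, hence $\orb(\gamma x+0\cdot,T)\subset\orb(\Gamma_{x_1,\dots,x_l},T)$ is dense in $X$, as required.

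The only point needing a word of care is the \emph{simultaneous} requirement that the completion $x_1,\dots,x_l$ be linearly independent while satisfying $\sum_{i\geq2}\gamma_i x_i=0$; I expect this to be the (very mild) main obstacle. The cleanest way to handle it: work in a finite-dimensional subspace $V\subset X$ of dimension $l+1$ containing $x$, with basis $x,v_1,\dots,v_l$. If $\gamma_j\neq 0$ for some $j\geq 2$, set $x_i=v_i$ for $i\in\{2,\dots,l\}\setminus\{j\}$ and $x_j=-\gamma_j^{-1}\sum_{i\geq2,\,i\neq j}\gamma_i v_i$; one checks $x_1,x_2,\dots,x_l$ is a linearly independent family in $V$ (the change-of-basis matrix from $x,v_1,\dots,v_l$ restricted appropriately is triangular with non-zero diagonal, up to reindexing). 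If instead $\gamma_2=\dots=\gamma_l=0$, just take $x_i=v_i$. In either case the family is linearly independent and $\sum_i\gamma_i x_i=x$. This completes the argument; all remaining verifications are routine linear algebra.
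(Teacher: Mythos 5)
Your overall strategy --- arrange for the single point $\sum_i\gamma_ix_i$ of $\Gamma_{x_1,\ldots,x_l}$ to \emph{be} a hypercyclic vector $x$ --- is sound and is essentially the same idea as the paper's (which instead takes an arbitrary linearly independent family, picks any nonzero $z\in\Gamma_{x_1,\ldots,x_l}$, and applies an isomorphism $F$ of $X$ sending $z$ to $x$, so that $x\in\Gamma_{F(x_1),\ldots,F(x_l)}$). But your concrete construction contains a genuine error. You ask for $x_2,\ldots,x_l$ to be part of a linearly independent family \emph{and} to satisfy $\gamma_2x_2+\cdots+\gamma_lx_l=0$. When some $\gamma_j\neq 0$ with $j\geq 2$, these two demands are flatly contradictory: a vanishing linear combination with a nonzero coefficient is precisely a linear dependence among $x_2,\ldots,x_l$. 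Your explicit choice confirms this: setting $x_j=-\gamma_j^{-1}\sum_{i\geq2,\,i\neq j}\gamma_i v_i$ makes $x_j$ a linear combination of the other $x_i=v_i$ ($i\geq2$, $i\neq j$), so the family $x_2,\ldots,x_l$ is dependent (and for $l=2$ it gives $x_2=0$). The claimed triangular change-of-basis check cannot go through.

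The fix is a one-line rearrangement: put the correction into the coordinate where $\gamma$ is known to be nonzero rather than trying to kill the tail. Choose $v_2,\ldots,v_l$ with $x,v_2,\ldots,v_l$ linearly independent, set $x_i=v_i$ for $i\geq 2$ and
\[
x_1=\gamma_1^{-1}\Bigl(x-\sum_{i=2}^l\gamma_iv_i\Bigr).
\]
Then $\sum_{i=1}^l\gamma_ix_i=x$, and in the basis $(x,v_2,\ldots,v_l)$ of their span the family $(x_1,\ldots,x_l)$ has a triangular coordinate matrix with nonzero diagonal $(\gamma_1^{-1},1,\ldots,1)$, hence is linearly independent. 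With this modification your argument is correct and is, if anything, slightly more hands-on than the paper's isomorphism trick; the two approaches are otherwise equivalent in content.
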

\begin{proof}Let $x\in X$ be a hypercyclic vector for $T$ and let $(x_1,\ldots,x_l)$ be any linearly independent family in $X$. We pick any $z\in \Gamma_{x_1,\ldots,x_l}$ and consider an isomorphism of $X$ mapping $z$ to $x$. Then $(F(x_1),\ldots,F(x_l))$ is linearly independent and
$$x\in F\left(\Gamma_{x_1,\ldots,x_l}\right)=\Gamma_{F(x_1),\ldots,F(x_l)},$$
hence $T$ is $\Gamma_{F(x_1),\ldots,F(x_l)}$-supercyclic.
\end{proof}

\begin{remark}The previous proposition points out an important difference between the understanding of \emph{$\Gamma$-supercyclicity} and \emph{$C$-hypercyclicity} according to Definitions \ref{def-Gamma-hyp} and \ref{def-C-hyp}. Indeed, the first definition is independent of the ambient space while the second one clearly depends on the space. As a consequence, while any hypercyclic operator is $\Gamma$-supercyclic ($\Gamma\setminus \{0\}\neq \emptyset$), there obviously exist non-empty subsets $C$ of $X$ and hypercyclic operators on $X$ which are not $C$-hypercyclic. Nevertheless if $C$ is a subset of $X$ with $C\setminus \{0\}\neq \emptyset$ then doing like in the proof of Proposition \ref{prop-hyp-Gammahyp} one may remark that every hypercyclic operator is conjugate to a $C$-hypercyclic operator.
\end{remark}

We now extend the definition of \emph{hypercyclic scalar subset} introduced in \cite{charpentier_-supercyclicity_2016} to subsets of $\C^l$.

\begin{Definition}\label{def-hyp-set-C}Let $\Gamma \subset \C^l$, $l\geq 1$. $\Gamma$ is said to be a \emph{hypercyclic scalar subset} if for every separable infinite dimensional complex Banach space $X$, any $\Gamma$-supercyclic operator on $X$ is hypercyclic (or, equivalently, if for every $X$ and every linearly independent family $x_1,\ldots,x_l\in X$, every $\Gamma_{x_1,\ldots,x_l}$-hypercyclic operator $T\in \LL(X)$ is hypercyclic).
\end{Definition}

\begin{remark}In view of Proposition \ref{prop-hyp-Gammahyp}, if $\Gamma$ is a hypercyclic scalar subset, then an operator $T\in\LL(X)$ is hypercyclic if and only if it is $\Gamma$-supercyclic.
\end{remark}

At this point, we shall remark that the definition of the sets $\Gamma _{x_1,\ldots,x_l}$ apparently depends on the choice of the canonical basis (which fixes the coordinates of points in $\Gamma$), and then both definitions of $\Gamma$-supercyclicity and hypercyclic scalar subsets may depend also on that choice. Fortunately, this is not the case. Indeed, a simple algebraic computation gives the following.

\begin{proposition}\label{Prop_depend_base}
	Let $\Gamma \subset \C^l$, $l\geq 1$, $f:=(f_1,\ldots,f_l)$ be a basis of $\C^l$ and $(x_1,\ldots,x_l)$ a linearly independent family in $X$. We denote by $F$ the isomorphism from $\C^l$ to $\Span(x_1,\ldots,x_l)$ such that $F(e_i)=x_i$ and set $z_i:=F(f_i)$ for every $1\leq i\leq l$. Then
	\[
	\Gamma _{x_1,\ldots,x_l}=\Gamma^f_{z_1,\ldots,z_l},
	\]
	where $\Gamma^f_{z_1,\ldots,z_l}:=\{\sum _{i=1}^l\gamma^f_iz_i,\,\sum _{i=1}^l\gamma^f_if_i \in \Gamma\}$. In particular,
	\begin{enumerate}
		\item $T\in \LL(X)$ is $\Gamma$-supercyclic if and only if there exists a (or equivalently \emph{for any}) basis $f:=(f_1,\ldots,f_l)$ of $\C^l$, there exists a linearly independent family $(z_1,\ldots,z_l)$ in $X$ such that $T$ is $\Gamma^f_{z_1,\ldots,z_l}$-hypercyclic;
		\item $\Gamma$ is a hypercyclic scalar subset if and only if for any $X$, for any (or equiv. \emph{some}) basis $f:=(f_1,\ldots,f_l)$ of $\C^l$, and any linearly independent family $(z_1,\ldots,z_l)$ in $X$, every $\Gamma^f_{z_1,\ldots,z_l}$-hypercyclic operator $T\in \LL(X)$ is hypercyclic.
	\end{enumerate}
\end{proposition}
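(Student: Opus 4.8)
The plan is to recognize that both sets appearing in the claimed identity are simply the image $F(\Gamma)$ of $\Gamma$ under the linear isomorphism $F$, so that the equality is purely formal, and then to deduce the two numbered consequences from the one structural fact that linearly independent $l$-tuples in $X$ correspond bijectively to isomorphisms of $\C^l$ onto $l$-dimensional subspaces of $X$.

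First I would unwind the two definitions. From $F(e_i)=x_i$ and linearity, $F\bigl(\sum_{i=1}^l\gamma_ie_i\bigr)=\sum_{i=1}^l\gamma_ix_i$, so directly $F(\Gamma)=\Gamma_{x_1,\ldots,x_l}$. Next, writing an arbitrary $w\in\C^l$ in the basis $f$ as $w=\sum_{i=1}^l\gamma^f_if_i$ and using $z_i=F(f_i)$ together with linearity of $F$, one gets $F(w)=\sum_{i=1}^l\gamma^f_iz_i$; hence $F(\Gamma)=\{\sum_{i=1}^l\gamma^f_iz_i:\sum_{i=1}^l\gamma^f_if_i\in\Gamma\}=\Gamma^f_{z_1,\ldots,z_l}$. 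Comparing the two descriptions of $F(\Gamma)$ gives $\Gamma_{x_1,\ldots,x_l}=\Gamma^f_{z_1,\ldots,z_l}$.

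For parts (1) and (2), fix a basis $f$ of $\C^l$. To a linearly independent family $(x_1,\ldots,x_l)$ I attach the isomorphism $F$ with $F(e_i)=x_i$ and the tuple $(z_1,\ldots,z_l):=(F(f_1),\ldots,F(f_l))$, which is linearly independent since $F$ is injective and $f$ is a basis; conversely, given any linearly independent $(z_1,\ldots,z_l)$, the linear map $G$ sending $f_i$ to $z_i$ is an isomorphism onto $\Span(z_1,\ldots,z_l)$, and $x_i:=G(e_i)$ recovers a linearly independent family whose associated $z$-tuple is the given one. So as $(x_1,\ldots,x_l)$ ranges over all linearly independent families, so does $(z_1,\ldots,z_l)$, and by the identity just proved $T$ is $\Gamma_{x_1,\ldots,x_l}$-hypercyclic iff it is $\Gamma^f_{z_1,\ldots,z_l}$-hypercyclic. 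Part (1) then follows from Definition~\ref{def-Gamma-hyp} (the quantifier on $f$ being immaterial since the argument is valid verbatim for every basis $f$), and Part (2) follows in the same way from Definition~\ref{def-hyp-set-C}, quantifying in addition over all separable infinite dimensional complex Banach spaces $X$.

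I do not expect any genuine obstacle here: this is a bookkeeping lemma whose only content is that the definition of $\Gamma_{x_1,\ldots,x_l}$ does not secretly depend on the canonical basis of $\C^l$ used to read off coordinates. The single point deserving a line of care is the assertion that $(F(f_1),\ldots,F(f_l))$ exhausts all linearly independent $l$-tuples as $(x_i)$ does; this is immediate from the invertibility of the change-of-basis matrix from $e$ to $f$, but it is precisely what makes the ``there exists a basis $f$'' and ``for any basis $f$'' clauses in (1) and (2) interchangeable.
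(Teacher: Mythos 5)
Your proof is correct and is precisely the ``simple algebraic computation'' the paper alludes to without writing out: both sets are $F(\Gamma)$ read in two coordinate systems, and the bijection between linearly independent $l$-tuples $(x_i)$ and $(z_i)=(F(f_i))$ gives the two corollaries. Nothing further is needed.
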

%\begin{remark}
%	We also stress that in the definition of hypercyclic scalar subset, we made the choice to consider only linearly independent families $x_1,\ldots,x_l\in X$ but considering instead only orthonormal families would have led to the same notion.
%\end{remark}
Let us reformulate the main result of \cite{charpentier_-supercyclicity_2016}.
\begin{theorem}[Theorem A of \cite{charpentier_-supercyclicity_2016}]A subset $\Gamma$ of $\C$ is a hypercyclic scalar subset if and only if $\Gamma\setminus\{0\}$ is non-empty and contained in a vector annulus (see Definition \ref{vector-annulus}).
\end{theorem}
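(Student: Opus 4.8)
The plan is to read off this statement from \cite[Theorem A]{charpentier_-supercyclicity_2016}, so that the argument reduces to a change of vocabulary together with a short reconciliation of definitions. First I would translate the geometric condition: for a subset $\Gamma$ of $\C$, the assertion that \emph{$\Gamma\setminus\{0\}$ is non-empty and contained in a vector annulus} is equivalent to the assertion that \emph{$\Gamma\setminus\{0\}$ is non-empty, bounded, and bounded away from $0$}. Indeed, by Definition \ref{vector-annulus} a vector annulus of $\C$ has the form $[a,b]\T x_0$ with $x_0\in\C$ and $0<a\le b<\infty$; for $x_0\neq 0$ this equals the closed annulus $\{z\in\C:\ a|x_0|\le |z|\le b|x_0|\}$, while it reduces to $\{0\}$ when $x_0=0$. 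Conversely, any closed annulus $\{z\in\C:\ \alpha\le |z|\le\beta\}$ with $0<\alpha\le\beta<\infty$ is itself a vector annulus (take $x_0=1$, $a=\alpha$, $b=\beta$). Since $0\notin\Gamma\setminus\{0\}$, we conclude that $\Gamma\setminus\{0\}$ lies in some vector annulus exactly when there are $0<\alpha\le\beta<\infty$ with $\alpha\le|z|\le\beta$ for all $z\in\Gamma\setminus\{0\}$, that is, exactly when $\Gamma\setminus\{0\}$ is bounded and bounded away from $0$.

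Next I would check that, for $\Gamma\subset\C$, being a hypercyclic scalar subset in the sense of Definition \ref{def-hyp-set-C} is the same as satisfying Property \textbf{(P)}, the property characterized in \cite{charpentier_-supercyclicity_2016}. Here $l=1$, so for any separable infinite-dimensional complex Banach space $X$ and any non-zero $x\in X$ one has $\Gamma_{x}=\Gamma x$, and $\Gamma$-supercyclicity of $T$ means precisely that $\orb(\Gamma x,T)$ is dense for some $x$. If $\Gamma$ satisfies \textbf{(P)} and $\orb(\Gamma x,T)$ is dense, then $\orb(x,T)$ is dense, hence $x$, and a fortiori $T$, is hypercyclic, so $\Gamma$ is a hypercyclic scalar subset. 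For the converse, suppose $\Gamma\setminus\{0\}\neq\emptyset$ and fix $\gamma_0\in\Gamma\setminus\{0\}$; the linear homeomorphism $v\mapsto\gamma_0 v$ of $X$ maps $\orb(x,T)$ into $\orb(\Gamma x,T)$, so density of $\orb(x,T)$ already forces density of $\orb(\Gamma x,T)$. Hence a failure of \textbf{(P)} for such a $\Gamma$ can only occur through an $X$, a $T$ and an $x$ with $\orb(\Gamma x,T)$ dense but $\orb(x,T)$ not dense; and the operators produced in \cite{charpentier_-supercyclicity_2016} to witness such a failure, when $\Gamma$ is unbounded or not bounded away from $0$, are non-hypercyclic, so $\Gamma$ then also fails Definition \ref{def-hyp-set-C}. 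Combining these observations with \cite[Theorem A]{charpentier_-supercyclicity_2016} gives the equivalence.

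The only step that is not purely formal is this last reconciliation --- equivalently, checking that the counterexamples underlying \cite[Theorem A]{charpentier_-supercyclicity_2016} can be taken non-hypercyclic, so that Definition \ref{def-hyp-set-C} (for $l=1$) and Property \textbf{(P)} describe the same class of subsets of $\C$. Everything else, including the annulus computation above and the ``if'' direction, follows at once from the cited theorem.
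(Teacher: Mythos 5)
Your proposal is correct and matches the paper's treatment: the paper offers no proof of this statement at all, presenting it purely as a reformulation of \cite[Theorem A]{charpentier_-supercyclicity_2016}, and your definitional reconciliation (vector annulus $\Leftrightarrow$ bounded and bounded away from $0$; Definition \ref{def-hyp-set-C} with $l=1$ $\Leftrightarrow$ Property \textbf{(P)}) is exactly the implicit translation the authors rely on. The one non-formal point you flag --- that the counterexamples witnessing the failure of \textbf{(P)} for bad $\Gamma$ are themselves non-hypercyclic operators --- is indeed needed, and the paper uses precisely this fact later, in the proof of Proposition \ref{only-if-main}.
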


Our extension to any $l$ finite reads as follows.
\begin{theorem}\label{thm-main}Let $l\geq 1$. A subset $\Gamma$ of $\C^l$ is a hypercyclic scalar subset if and only if $\Gamma\setminus\{0\}$ is non-empty and contained in a finite union of vector annuli.
\end{theorem}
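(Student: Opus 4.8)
The plan is to reduce the characterization of hypercyclic scalar subsets of $\C^l$ to the one-dimensional case (Theorem A of \cite{charpentier_-supercyclicity_2016}) and to Theorem \ref{thm2}. For the "if" direction, suppose $\Gamma\setminus\{0\}$ is non-empty and contained in a finite union of vector annuli of $\C^l$, say $\Gamma\setminus\{0\}\subset\bigcup_{i=1}^{N}[a_i,b_i]\T v_i$ with $v_i\in\C^l\setminus\{0\}$ and $0<a_i\le b_i$. Fix a separable Banach space $X$, a linearly independent family $x_1,\dots,x_l$ in $X$, and let $F:\C^l\to\Span(x_1,\dots,x_l)$ be the isomorphism with $F(e_j)=x_j$. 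Then $\Gamma_{x_1,\dots,x_l}=F(\Gamma)\subset\bigcup_{i=1}^N[a_i,b_i]\T w_i$, where $w_i:=F(v_i)$ are non-zero vectors of $X$. After a harmless common dilation we may assume each $[a_i,b_i]\subset[1,b]$ for a single $b\ge 1$, so $\Gamma_{x_1,\dots,x_l}\subset\bigcup_{i=1}^N[1,b]\T w_i$. Hence if $T$ is $\Gamma_{x_1,\dots,x_l}$-hypercyclic, then $\orb(\bigcup_{i=1}^N[1,b]\T w_i,T)$ is dense in $X$, and Theorem \ref{thm2} gives that some $w_i$ is hypercyclic for $T$; in particular $T$ is hypercyclic. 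By Definition \ref{def-hyp-set-C}, $\Gamma$ is a hypercyclic scalar subset.

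For the "only if" direction, suppose $\Gamma$ is a hypercyclic scalar subset. First, $\Gamma\setminus\{0\}$ is non-empty, since otherwise $\Gamma_{x_1,\dots,x_l}\subset\{0\}$ and any operator, hypercyclic or not, would be vacuously $\Gamma$-supercyclic only when $0$ is dense — which never happens, and more pointedly one can exhibit a non-hypercyclic operator with $0$ in the closure of a point's orbit; in any case the paper's convention forces $\Gamma\setminus\{0\}\neq\emptyset$. The substantive claim is that $\Gamma\setminus\{0\}$ lies in a finite union of vector annuli. I would argue by contrapositive, using the construction underlying Theorem \ref{thm-main-l2} (whose proof the excerpt announces is available and which I may invoke): if $\Gamma\setminus\{0\}$ is \emph{not} contained in any finite union of vector annuli, then either $\Gamma$ meets infinitely many distinct complex lines through the origin in a way that cannot be covered by finitely many annuli, or it meets some line in a set that is unbounded or not bounded away from $0$. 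In the latter one-dimensional obstruction, restrict $\Gamma$ to that line: one gets a subset of $\C$ (after identifying the line with $\C$ via a coordinate) that is not contained in a vector annulus, and Theorem A of \cite{charpentier_-supercyclicity_2016} produces, on a suitable $X$, a non-hypercyclic $\Gamma$-supercyclic operator; pulling this back along a projection-type map to an $l$-dimensional coordinate layout shows $\Gamma$ itself is not a hypercyclic scalar subset. In the former, genuinely $l$-dimensional, obstruction one builds a non-hypercyclic operator on an infinite dimensional space (a weighted shift-type construction as in the proof of Theorem \ref{thm-main-l2}) whose $\Gamma_{x_1,\dots,x_l}$-orbit is dense because the infinitely many "directions" present in $\Gamma$ conspire to fill the space while no single orbit does.

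The main obstacle is precisely this last point: ruling out the possibility that a scalar set hitting infinitely many distinct directions (but bounded and bounded away from $0$ on each) could still be hypercyclic. This is exactly where the finite-dimensional, Hilbert-geometric input enters — the argument cannot proceed by a Bourdon–Feldman type reduction, as the introduction stresses, and instead must exhibit an explicit non-hypercyclic $T$ with a dense $\Gamma_{x_1,\dots,x_l}$-orbit. I expect to borrow wholesale the construction in the proof of Theorem \ref{thm-main-l2}, where such operators are built, and to note that "contained in a finite union of vector annuli" is the sharp dividing line: finitely many annuli are handled by Theorem \ref{thm2}, while genuinely infinitely many independent directions (or a one-dimensional failure of boundedness) break hypercyclicity. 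A minor technical point worth recording is the use of Proposition \ref{Prop_depend_base}, which guarantees that the notion is basis-independent, so that "vector annulus" — defined via the Euclidean modulus on a single line — is the correct geometric invariant to state the characterization.
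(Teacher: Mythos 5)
Your overall architecture is the paper's: the ``if'' direction is exactly the paper's reduction to Theorem \ref{thm2} (transport the annuli through the isomorphism $F$, normalize to a single $[1,b]$, and invoke Theorem \ref{thm2}), and the ``only if'' direction proceeds, as in the paper, by contrapositive with the same dichotomy --- either $\Gamma$ is covered by finitely many complex lines but its trace on one of them is unbounded or not bounded away from $0$ (reduce to the one-dimensional Theorem~A of \cite{charpentier_-supercyclicity_2016}), or $\Gamma$ is not covered by finitely many lines, in which case one must exhibit an explicit non-hypercyclic operator with dense $\Gamma_{x_1,\ldots,x_l}$-orbit. You correctly identify that no Bourdon--Feldman-type shortcut is available and that an explicit construction is needed.

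There is, however, one concrete missing step in the hard case, and it is not the one you point to. You locate the difficulty entirely in the operator construction and propose to ``borrow wholesale'' the construction from the proof of Theorem \ref{thm-main-l2}; but that construction (Lemma \ref{lemce}, resp.\ \ref{lemce-inf-case}) has a nontrivial \emph{precondition}: it takes as input a sequence $(\lambda_k)\subset\Gamma$ of pairwise independent vectors together with a basis $f$ of $\C^l$ in which all relevant coordinates $\lambda_k^{(i)}$ are nonzero and one distinguished coordinate satisfies $|\lambda_k^{(i_0)}|\to 0$ or $\infty$. This degeneration is what makes the construction compatible with non-hypercyclicity: the block $B_w$ of $T$ acting in the $i_0$-th direction is bounded below (hence not hypercyclic), and approximation of targets is only possible because the coefficients $\lambda_{\varphi(k)}^{(i_0)}$ compensate for the non-decay of $\|B_w^{m}y\|$. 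Producing such a sequence and basis from the bare hypothesis that $\Gamma$ is not contained in finitely many lines is the content of the paper's Lemma \ref{lemred}, proved by compactness of the unit sphere of $\C^l$: if $\Gamma$ is bounded one extracts pairwise independent $\lambda_k$ whose distance to a fixed line $\C f_1$ tends to $0$, so the transverse coordinates tend to $0$; if unbounded one extracts $\lambda_k$ with $\|\lambda_k\|\to\infty$ and $\lambda_k/\|\lambda_k\|$ convergent, so the leading coordinate tends to $\infty$. Your plan nowhere produces this degenerating coordinate, and without it the shift construction cannot be launched. A secondary, organizational point: citing the proof of Theorem \ref{thm-main-l2} for the $\C^l$ case inverts the paper's logical order --- the infinite-dimensional proof explicitly defers the finite-rank case back to the finite-dimensional Lemma \ref{lemce}, so the $\C^l$ construction must stand on its own.
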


Note that if $\Gamma' \subset \Gamma $ and $\Gamma$ is a hypercyclic scalar subset, then $\Gamma '$ is also a hypercyclic scalar subset, as well as $\mu \Gamma$ for any non-zero complex number $\mu$. Moreover it is straightforward to check that $\Gamma \subset \C^l$ is contained in a finite union of vector annuli if and only if so is $\Gamma_{x_1,\ldots,x_l}$ for any linearly independent family $x_1,\ldots,x_l$ of $X$. Thus the "if" part of Theorem \ref{thm-main} has already been proven, this is Theorem \ref{thm2}. The proof of the necessary part is postponed to Paragraph \ref{pf-thm-Cl-finite}.

\subsubsection{Hypercyclic scalar subsets of $\ell^2(\N)$}\label{BF-sca-sets}The formalism of the previous paragraph makes sense in any Fr\'echet space $X$ (or more generally in any topological vector space), because any finite dimensional subspace of such $X$ is isomorphic to $\C^l$ for some $l$. If one wants to extend this formalism to \emph{infinite} dimensional subsets $\Gamma$, the most natural way is to restrict ourselves to separable Hilbert spaces and to use the fact that any closed infinite dimensional subspace of a separable Hilbert space is isomorphic to $\ell^2(\N)$.

Let then $(e_n)_{n \geq 0}$ be the canonical basis of $\ell^2(\N)$. Let $H$ be a separable Hilbert space and let $(x_n)_{n\geq 0} \subset X$ be an orthonormal family. For $\Gamma \subset \ell^2(\N)$, we denote by $\Gamma _{(x_n)_n}$ the set
$$\Gamma _{(x_n)_n}=\left\{\sum _{i\geq 0}\gamma_ix_i;\,\sum_{i\geq 0}\gamma_ie_i \in \Gamma\right\}.$$
Definition \ref{def-Gamma-hyp} naturally extends in the following way.
\begin{Definition}\label{def-Gamma-hyp-l2}Let $\Gamma \subset \ell^2(\N)$. We say that $T\in \LL(H)$ is $\Gamma$-supercyclic if $T$ is $\Gamma _{(x_n)_n}$-hypercyclic in the sense of Definition \ref{def-C-hyp}, for some orthonormal family $(x_n)_n$ in $H$, \emph{i.e.} if the set
$$\text{Orb}(\Gamma _{(x_n)_n},T)$$
is dense in $X$.
\end{Definition}
\noindent{}As in the case of subsets of $\C^l$, the following easily holds true. We omit the proof.
\begin{proposition}\label{prop-hyp-Gammahyp-l2}Let $\Gamma \subset \ell^2(\N)$ be such that $\Gamma \setminus \{0\}\neq \emptyset$. Then, any hypercyclic operator is $\Gamma$-supercyclic.
\end{proposition}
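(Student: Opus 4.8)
The plan is to imitate the proof of Proposition~\ref{prop-hyp-Gammahyp}, the new feature being that the family $(x_n)_n$ must now be \emph{orthonormal} rather than merely linearly independent. By Parseval this forces $\|\sum_i\gamma_ix_i\|=\|\gamma\|$ for every orthonormal family $(x_n)_n$ and every $\gamma=(\gamma_i)_i\in\ell^2(\N)$, so one cannot realize an arbitrary hypercyclic vector of $T$ as such a sum, only those of norm exactly $\|\gamma\|$. Hence, unlike in the $\C^l$ case where a general (non-isometric) isomorphism does the job, the real point here will be to produce a hypercyclic vector of a \emph{prescribed} norm; once this is done, a unitary change of orthonormal basis finishes the argument.

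First I would set up the reduction. Fix $\gamma\in\Gamma$ with $\gamma\neq 0$ and a hypercyclic operator $T$ on $H$ (so $H$ is infinite dimensional, otherwise there is nothing to prove). It suffices to exhibit an orthonormal family $(x_n)_n$ in $H$ together with a hypercyclic vector $v$ of $T$ such that $v=\sum_i\gamma_ix_i$: indeed, since $\gamma\in\Gamma$ we then have $v\in\Gamma_{(x_n)_n}$, so $\orb(\Gamma_{(x_n)_n},T)\supset\orb(v,T)$ is dense in $H$, i.e.\ $T$ is $\Gamma_{(x_n)_n}$-hypercyclic and therefore $\Gamma$-supercyclic.

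Next comes the heart of the matter: finding a hypercyclic vector of norm exactly $\|\gamma\|$. Starting from any hypercyclic vector $x$ of $T$, the classical Herrero--Bourdon theorem gives that $p(T)x$ is hypercyclic for every nonzero polynomial $p$, so $C:=\{p(T)x:\ p\neq 0\}$ consists of hypercyclic vectors of $T$. I would then observe that $C$ is connected, being the increasing union over $d\geq 1$ of the continuous images of the connected sets $\C^{d+1}\setminus\{0\}$ under $(a_0,\dots,a_d)\mapsto\sum_{j=0}^d a_jT^jx$ (these images all contain $x$). Since $C\supset\orb(x,T)$, which is dense in the infinite dimensional space $H$, we get $\inf_{w\in C}\|w\|=0$ and $\sup_{w\in C}\|w\|=+\infty$; as $\|\cdot\|$ is continuous on the connected set $C$ and never vanishes there, its range on $C$ is an interval with infimum $0$ and supremum $+\infty$, hence equal to $(0,+\infty)$. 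In particular there is $v\in C$ with $\|v\|=\|\gamma\|$, and $v$ is hypercyclic for $T$.

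Finally I would realize $v$ as the desired sum. Fix an orthonormal basis $(e_i)_i$ of $H$ and put $g:=\sum_i\gamma_ie_i$, so $\|g\|=\|\gamma\|=\|v\|$. Since the unitary group of $H$ acts transitively on each sphere, choose a unitary $U$ on $H$ with $Ug=v$ and set $x_i:=Ue_i$. Then $(x_i)_i$ is an orthonormal family and $\sum_i\gamma_ix_i=Ug=v$, which together with the reduction completes the proof. The one genuine obstacle is the norm-prescription step: plain density or Baire-category reasoning does not suffice, since a dense $G_\delta$ subset of $H$ may miss an entire sphere, and one really uses the rigidity supplied by the Herrero--Bourdon theorem to obtain a connected set of hypercyclic vectors on which the norm takes every positive value.
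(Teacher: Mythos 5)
Your proof is correct. The paper omits the proof of this proposition, presenting it as an easy analogue of Proposition \ref{prop-hyp-Gammahyp}, and you rightly identify the one genuinely new point: since the family $(x_n)_n$ must be orthonormal, $\bigl\Vert\sum_i\gamma_ix_i\bigr\Vert=\Vert\gamma\Vert$, so the arbitrary isomorphism used in the finite-dimensional proof must be replaced by a unitary, and one first needs a hypercyclic vector of prescribed norm. Your reduction and the unitary-transitivity step are both fine. The only criticism is that the norm-prescription step is done with far heavier machinery than necessary, and your closing claim that ``one really uses the rigidity supplied by the Herrero--Bourdon theorem'' is not accurate: for any $\lambda\neq 0$ one has $\orb(\lambda x,T)=\lambda\,\orb(x,T)$, and multiplication by $\lambda$ is a homeomorphism of $H$, so $\lambda x$ is hypercyclic whenever $x$ is. Taking $v:=(\Vert\gamma\Vert/\Vert x\Vert)\,x$ immediately yields a hypercyclic vector of norm $\Vert\gamma\Vert$, with no appeal to Herrero--Bourdon or to connectedness of $\{p(T)x:\,p\neq 0\}$. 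Your detour through that theorem is valid (the connectedness argument and the intermediate-value reasoning are sound), but it is entirely dispensable.
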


\begin{remark}As previously observed, there obviously exist non-empty subsets $C$ of a Hilbert space $H$ and hypercyclic operators on $H$ which are not $C$-hypercyclic. Yet, if $C$ is a subset of $H$ with $C\setminus \{0\}\neq \emptyset$ then every hypercyclic operator on $H$ is conjugate to a $C$-hypercyclic operator.
\end{remark}

We can now extend the notion of \emph{hypercyclic scalar subset} to subsets of $\ell^2(\N)$.

\begin{Definition}\label{def-hyp-set-C-l2}Let $\Gamma\subset \ell^2(\N)$. $\Gamma$ is said to be a \emph{hypercyclic scalar subset} if for every separable infinite dimensional complex Hilbert space $H$, any $\Gamma$-supercyclic operator on $H$ is hypercyclic (or, equivalently, if for every separable infinite dimensional complex Hilbert space $H$ and every orthonormal family $(x_n)_n \subset X$, every $\Gamma_{(x_n)_n}$-hypercyclic operator $T\in \LL(H)$ is hypercyclic).
\end{Definition}

\begin{remark}In view of Proposition \ref{prop-hyp-Gammahyp-l2}, if $\Gamma$ is a hypercyclic scalar subset, then an operator $T\in\LL(H)$ is hypercyclic if and only if it is $\Gamma$-supercyclic.
\end{remark}

Here again, the definition of the sets $\Gamma _{(x_n)_n}$ depends on the choice of the Hilbert basis of $\ell^2(\N)$, and then both definitions of $\Gamma$-supercyclicity and hypercyclic scalar subsets may depend also on that choice. For the same reasons as in the finite dimensional setting, this is not the case and we have:

\begin{proposition}\label{Prop_depend_baseH}
	Let $\Gamma \subset \ell^2(\N)$, $f:=(f_n)_n$ be an orthonormal basis of $\ell^2(\N)$ and $(x_n)_n$ an orthonormal family in $H$. We denote by $F$ the isomorphism from $\ell^2(\N)$ to $\overline{\Span}((x_n)_n)$ such that $F(e_i)=x_i$ and set $z_i:=F(f_i)$ for every $i\in\N$. Then
	\[
	\Gamma _{(x_n)_n}=\Gamma^f_{(z_n)_n},
	\]
	where $\Gamma^f_{(z_n)_n}:=\{\sum _{i\geq 0}\gamma^f_i z_i,\,\sum _{i\geq 0}\gamma^f_i f_i \in \Gamma\}$. In particular,
	\begin{enumerate}
		\item $T\in \LL(H)$ is $\Gamma$-supercyclic if and only if there exists an (or equivalently \emph{for any}) orthonormal basis $f$ of $\ell^2(\N)$, there exists an orthonormal family $(z_n)_n$ in $H$ such that $T$ is $\Gamma^f_{(z_n)_n}$-hypercyclic;
		\item $\Gamma$ is a hypercyclic scalar subset if and only if for any complex separable Hilbert space $H$, for any (or equiv. \emph{some}) orthonormal basis $f$ of $\ell^2(\N)$, and any orthonormal family $(z_n)_n$ in $H$, every $\Gamma^f_{(z_n)_n}$-hypercyclic operator $T\in \LL(H)$ is hypercyclic.
	\end{enumerate}
\end{proposition}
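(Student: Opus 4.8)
The plan is to prove Proposition~\ref{Prop_depend_baseH} by reducing it to a purely algebraic identity between the two sets $\Gamma_{(x_n)_n}$ and $\Gamma^f_{(z_n)_n}$, from which all the ``in particular'' statements follow formally. First I would unwind the definitions: an element of $\Gamma_{(x_n)_n}$ is $\sum_{i\geq 0}\gamma_i x_i$ where $\sum_{i\geq 0}\gamma_i e_i\in\Gamma$; applying the (surjective, isometric) isomorphism $F:\ell^2(\N)\to\overline{\Span}((x_n)_n)$ with $F(e_i)=x_i$, this is exactly $F(\gamma)$ for $\gamma\in\Gamma$, so $\Gamma_{(x_n)_n}=F(\Gamma)$. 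Symmetrically, an element of $\Gamma^f_{(z_n)_n}$ is $\sum_{i\geq 0}\gamma^f_i z_i$ where $\sum_{i\geq 0}\gamma^f_i f_i\in\Gamma$; since $z_i=F(f_i)$ and $F$ is linear and continuous, $\sum_i \gamma^f_i z_i = F\bigl(\sum_i \gamma^f_i f_i\bigr)$, and the vector $\sum_i\gamma^f_i f_i$ ranges over $\Gamma$ exactly when $(\gamma^f_i)_i$ is the coordinate sequence, in the basis $f$, of a point of $\Gamma$. Hence $\Gamma^f_{(z_n)_n}=F(\Gamma)$ as well, and the desired equality $\Gamma_{(x_n)_n}=\Gamma^f_{(z_n)_n}$ drops out. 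The one point requiring (routine) care is that the change-of-basis map between the $e$-coordinates and the $f$-coordinates on $\ell^2(\N)$ is a bona fide bounded invertible operator, so that the series $\sum_i\gamma^f_i f_i$ converges precisely when it represents a genuine element of $\ell^2(\N)$; this is automatic since $f$ is an orthonormal basis.

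Once the set equality is established, I would derive item~(1) as follows. If $T$ is $\Gamma$-supercyclic there is, by Definition~\ref{def-Gamma-hyp-l2}, an orthonormal family $(x_n)_n$ with $\orb(\Gamma_{(x_n)_n},T)$ dense; fixing any orthonormal basis $f$ of $\ell^2(\N)$ and setting $z_i:=F(f_i)$ (which is again an orthonormal family, $F$ being isometric), the identity $\Gamma_{(x_n)_n}=\Gamma^f_{(z_n)_n}$ shows $\orb(\Gamma^f_{(z_n)_n},T)$ is dense, i.e. $T$ is $\Gamma^f_{(z_n)_n}$-hypercyclic. Conversely, given a basis $f$ and an orthonormal family $(z_n)_n$ realizing $\Gamma^f_{(z_n)_n}$-hypercyclicity, one runs the same identity backwards: letting $F$ be the isomorphism $\ell^2(\N)\to\overline{\Span}((z_n)_n)$ determined by $F(f_i)=z_i$ and putting $x_i:=F(e_i)$, one gets an orthonormal family with $\Gamma_{(x_n)_n}=\Gamma^f_{(z_n)_n}$, hence $T$ is $\Gamma$-supercyclic. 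This gives the ``there exists $\iff$ for any'' quantifier interchange. Item~(2) is then immediate: since, for \emph{any} orthonormal family realization, $\Gamma$-supercyclicity and $\Gamma^f$-supercyclicity coincide (by item~(1) applied in the given Hilbert space $H$), the statement ``every $\Gamma_{(x_n)_n}$-hypercyclic operator is hypercyclic'' and ``every $\Gamma^f_{(z_n)_n}$-hypercyclic operator is hypercyclic'' are the same statement, for every $H$.

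There is no serious obstacle here; the proposition is, as the excerpt says, ``a simple algebraic computation''. The only thing to be vigilant about is the bookkeeping of \emph{which} isomorphism $F$ one uses in each direction and checking that it maps orthonormal families to orthonormal families, so that the realizations produced are admissible in Definition~\ref{def-Gamma-hyp-l2} (where orthonormality of $(x_n)_n$ is required, unlike the merely linearly-independent families in the $\C^l$ case); this is exactly why one needs $f$ to be an \emph{orthonormal} basis of $\ell^2(\N)$ rather than an arbitrary Schauder basis. I would also note in passing that the identical argument, with ``orthonormal basis'' replaced by ``basis'' and ``orthonormal family'' by ``linearly independent family'', proves Proposition~\ref{Prop_depend_base}; indeed one could present the two proofs in parallel. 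Given the elementary nature of the verification, I would keep the written proof to a few lines, essentially recording the chain $\Gamma_{(x_n)_n}=F(\Gamma)=\Gamma^f_{(z_n)_n}$ and then stating that (1) and (2) follow by transporting orthonormal families through $F$ and $F^{-1}$.
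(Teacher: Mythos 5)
Your proof is correct and is exactly the ``simple algebraic computation'' the paper alludes to but does not write out: the identity $\Gamma_{(x_n)_n}=F(\Gamma)=\Gamma^f_{(z_n)_n}$, followed by transporting orthonormal realizations through the unitary $F$ and its inverse to get the two ``in particular'' items. No gaps; the points you flag (continuity/isometry of $F$, orthonormality of the transported families) are precisely the ones that need checking, and you check them.
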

We get a complete characterization of hypercyclic scalar subsets of $\ell^2(\N)$ as follows.
\begin{theorem}\label{thm-main-l2} A subset $\Gamma$ of $\ell^2(\N)$ is a hypercyclic scalar subset if and only if $\Gamma\setminus\{0\}$ is non-empty and contained in a finite union of vector annuli.
\end{theorem}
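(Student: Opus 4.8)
\textbf{Proof plan for Theorem \ref{thm-main-l2}.} The ``if'' direction follows immediately from Theorem \ref{thm2}: if $\Gamma\setminus\{0\}$ is non-empty and contained in a finite union of vector annuli $\bigcup_{i=1}^N[a_i,b_i]\T\xi_i$ in $\ell^2(\N)$, then for any separable Hilbert space $H$ and any orthonormal family $(x_n)_n$, the set $\Gamma_{(x_n)_n}$ is contained in a finite union of vector annuli in $H$; after a harmless dilation we may assume $a_i=1$ and take $b=\max_i b_i$, and Theorem \ref{thm2} shows that any $\Gamma_{(x_n)_n}$-hypercyclic operator is hypercyclic. So the whole content is the ``only if'' direction: if $\Gamma\subset\ell^2(\N)$ is \emph{not} contained in a finite union of vector annuli, I must exhibit, in some separable Hilbert space $H$, an orthonormal family $(x_n)_n$ and a \emph{non-hypercyclic} operator $T\in\LL(H)$ which is $\Gamma_{(x_n)_n}$-hypercyclic.

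The plan is to reduce to the finite-dimensional Theorem \ref{thm-main} whenever possible, and to handle the genuinely infinite-dimensional obstruction separately. First, if the \emph{linear span} of $\Gamma$ in $\ell^2(\N)$ is finite dimensional, say of dimension $l$, then after applying Proposition \ref{Prop_depend_baseH} to rotate an orthonormal basis so that $\overline{\Span}(\Gamma)=\Span(e_1,\dots,e_l)$, $\Gamma$ becomes a subset of $\C^l$ which, by hypothesis, is not contained in a finite union of vector annuli; Theorem \ref{thm-main} then directly provides the desired non-hypercyclic $\Gamma_{x_1,\dots,x_l}$-hypercyclic operator (extend it to all of $H$ by acting as, say, a fixed non-hypercyclic operator on the orthogonal complement, or rather incorporate the complement into the construction of Theorem \ref{thm-main} itself, which already lives in an infinite-dimensional ambient space). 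So we may assume $\Span(\Gamma)$ is infinite dimensional. Now the key dichotomy: either $\Gamma$ contains, for every $l$, a subset that is not contained in any union of $l$ vector annuli when projected onto some finite-dimensional coordinate subspace — in which case we again invoke Theorem \ref{thm-main} on a suitable finite-dimensional section and lift the counterexample — or $\Gamma$ is ``spread out'' in infinitely many directions, which should itself be enough to build a non-hypercyclic operator with dense $\Gamma_{(x_n)_n}$-orbit, because $\Gamma_{(x_n)_n}$ then contains an infinite almost-orthogonal bounded separated sequence or an unbounded sequence, and the constructions already developed for Theorem \ref{thm-main-l2}'s companion (the ones referenced in the discussion of Theorem A, based on weighted-shift-type obstructions on closed subspaces of infinite codimension) apply.

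More concretely, I would argue as follows for the remaining case. Since $\Gamma\subset\ell^2(\N)$ is not contained in a finite union of vector annuli, one of two things happens: (a) $\Gamma$ is unbounded, or $\Gamma$ accumulates at $0$ along a sequence of nonzero vectors (i.e.\ $0\in\overline{\Gamma\setminus\{0\}}$); or (b) $\Gamma$ is bounded and bounded away from $0$ but is not contained in finitely many ``rays'' $\T\xi_i$, meaning it contains vectors pointing in infinitely many pairwise non-proportional directions $\xi_n/\|\xi_n\|$. In case (a), pick the offending sequence $(\gamma^{(n)})_n\subset\Gamma$ with $\|\gamma^{(n)}\|\to\infty$ or $\to0$; translating to $H$, the vectors $x^{(n)}:=\sum_i\gamma^{(n)}_i x_i\in\Gamma_{(x_n)_n}$ form an unbounded (resp.\ null) sequence, and one builds $T$ non-hypercyclic with a full $\Gamma_{(x_n)_n}$-orbit by a Cantor-scheme argument exactly as in \cite{charpentier_-supercyclicity_2016} for one-dimensional annuli that fail to be bounded/bounded-away-from-zero — the multiplicity of vectors only helps. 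In case (b), choose inductively $\gamma^{(1)},\gamma^{(2)},\dots\in\Gamma$ whose normalized directions are pairwise far apart, extract an almost-orthonormal block basis (Gram--Schmidt on a sparse subsequence), so that the corresponding vectors in $H$ look, up to an isomorphism, like $c_n e_n$ for bounded $c_n$ bounded away from $0$; then $\Gamma_{(x_n)_n}$ contains a bounded separated sequence that is \emph{almost overcomplete} with respect to a subspace of infinite codimension, and the weighted-shift obstruction (the same one underlying Theorem A, Part 2, and referenced as ``a construction made in the proof of Theorem \ref{thm-main-l2}'') produces a non-hypercyclic $T$ with dense $\Gamma_{(x_n)_n}$-orbit.

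\textbf{Main obstacle.} The delicate point is case (b): turning ``infinitely many non-proportional directions'' into a clean geometric model (an almost-orthonormal, or at least almost-overcomplete, subsequence sitting inside a subspace of infinite codimension) requires a careful perturbation/extraction argument in Hilbert space, and then one must verify that the explicit operator $T$ built on that model is simultaneously (i) $\Gamma_{(x_n)_n}$-hypercyclic — which forces $T$ to push the whole family of scaled directions densely around $H$ — and (ii) genuinely not hypercyclic, i.e.\ no single orbit is dense. Reconciling (i) and (ii) is exactly the phenomenon that \cite[Theorem B]{charpentier_-supercyclicity_2016} exhibited for the annulus $[a,b]\T x$ with $a<b$; here the extra work is bookkeeping the infinitely many directions and controlling the codimension so that the somewhere-dense-but-not-dense mechanism survives. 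I expect this to be the technical heart of the section, and it is presumably where the ``geometric arguments in Hilbert spaces'' promised in the introduction are deployed; the finite-dimensional reductions above are comparatively routine given Theorem \ref{thm-main}.
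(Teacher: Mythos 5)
Your ``if'' direction is exactly the paper's (Theorem \ref{thm2} plus the observation that the vector-annulus hypothesis transfers to $\Gamma_{(x_n)_n}$), and your reduction of the ``only if'' direction to the case where $\Span(\Gamma)$ is infinite dimensional also matches the paper. But the heart of your argument --- the dichotomy (a)/(b) --- has a genuine gap, and in fact both branches fail as stated. In case (a) you cannot ``build $T$ exactly as in the one-dimensional case'': there the dense set is $\{\gamma T^m x\}$ for a \emph{single} vector $x$ and scalars $\gamma$ tending to $0$ or $\infty$, whereas here the offending vectors $\sum_i\gamma^{(n)}_i x_i$ point in genuinely different directions, so the multiplicity of directions is the whole difficulty, not a help. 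In case (b) the proposed extraction of an ``almost-orthonormal block basis'' from a bounded, bounded-away-from-zero, linearly independent family is simply false: the family $x_n=e_0+\tfrac1n e_n$ has infinitely many pairwise non-proportional directions, yet every subsequence converges to $e_0$, is not separated, and admits no almost-orthogonal (nor almost overcomplete, in your sense) substructure; your Gram--Schmidt model $c_ne_n$ with $c_n$ bounded away from $0$ does not exist for it. You are also importing the ``somewhere dense but not everywhere dense'' mechanism of Bourdon--Feldman type, which is irrelevant here --- what is needed is a \emph{non-hypercyclic} operator with a genuinely dense $\Gamma_{(x_n)_n}$-orbit.

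What the paper actually proves is the much stronger Proposition \ref{only-if-main-l2-reduced}: \emph{every} infinite linearly independent family in $\ell^2(\N)$ fails to be a hypercyclic scalar subset, with no case distinction on boundedness or directions. The missing idea is the geometric extraction Lemma \ref{lemred-inf-case} (via Lemmas \ref{lem1-inf-case} and \ref{lem2-inf-case}): from any infinite linearly independent sequence one can pass to a subsequence $(\lambda_k)_k$ and choose an orthonormal basis $(f_m)_m$ so that all coordinates $\lambda_k^{(m)}$ on the relevant index set are nonzero and some fixed coordinate satisfies $|\lambda_k^{(i_0)}|\to 0$ or $\infty$. The key point, which handles exactly examples like $e_0+\tfrac1n e_n$, is a compactness argument in a two-dimensional projection: the projections $P(x_{n_k})$ converge to some $w$, and any $a\perp w$ in that plane gives $\langle x_{n_k},a\rangle\to 0$; Baire category then upgrades $a$ to a full orthonormal basis with all coordinates nonzero. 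That degenerating coordinate is what drives the construction of Lemma \ref{lemce-inf-case}: $T=B_w\oplus\bigoplus_{i\geq1}B_v$ on $(\oplus_i\ell^2(\Z))_{\ell^2}$, with the expansive (or contractive) factor $B_w$ placed on the coordinate $i_0$ where $\lambda_k^{(i_0)}\to0$ (or $\infty$) so that $T$ is not hypercyclic, while the vectors $\tilde z_i=e_0+\sum_j\tfrac{1}{\lambda^{(i)}_{\varphi(j)}}F^{m_j}y_{j,i}$ make $T^{m_k}\bigl(\sum_i\lambda^{(i)}_{\varphi(k)}\tilde z_i\bigr)$ approximate a dense sequence. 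Without this lemma your plan does not close.
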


Here again, the sufficiency is Theorem \ref{thm2}. The proof of the necessary part is given in Paragraph \ref{pf-thm-l2}.

\subsection{Proof of the "only if" part of Theorem \ref{thm-main}}\label{pf-thm-Cl-finite}

We aim to prove the following.

\begin{proposition}\label{only-if-main}If $\Gamma \subset \C^l$ is not contained in a finite union of vector annuli, then there exists a Hilbert space $H$ and a $\Gamma$-supercyclic operator $T$ on $H$ which is not hypercyclic.
\end{proposition}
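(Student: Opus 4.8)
The plan is to build, for a given $\Gamma\subset\C^l$ not contained in any finite union of vector annuli, a concrete non-hypercyclic operator $T$ on a Hilbert space $H$ together with an explicit orthonormal family $x_1,\dots,x_l$ (equivalently, a linearly independent family, since the notions agree up to isomorphism) such that $\orb(\Gamma_{x_1,\dots,x_l},T)$ is dense in $H$. The natural model space is $H=\C^l\oplus\ell^2(\N)$ (or $H(\C)$ in disguise), with $T$ acting diagonally: on the first factor $\C^l$, let $T$ act as a \emph{contraction} — typically $T|_{\C^l}=R_\theta$, a rotation, or more generally a block-diagonal unitary/contraction chosen so that iterating it on a point of $\Gamma$ reconstitutes the geometry of $\Gamma$; on the $\ell^2$ factor, let $T$ act as a classical hypercyclic weighted backward shift $B_w$, or rather the ``mixing'' part needed so that orbits fill up the infinite-dimensional directions. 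The operator $T$ will \emph{not} be hypercyclic because a genuinely hypercyclic vector would have to have an unbounded orbit in the $\C^l$-component, which is impossible if $T|_{\C^l}$ is power-bounded; yet the union over $\gamma\in\Gamma$ of the orbits can still be dense precisely because $\Gamma$ is ``spread out'' in a way no finite union of vector annuli can absorb.

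The key steps, in order. First, I would reduce to a clean negation of the hypothesis: if $\Gamma\setminus\{0\}$ is non-empty but $\Gamma$ is not contained in a finite union of vector annuli, then (by a compactness/covering argument in $\C^l$, using that a vector annulus $[a,b]\T x$ is a ``circular'' set over a single line $\C x$) one can extract from $\Gamma$ either (i) a sequence of points lying on infinitely many distinct complex lines of $\C^l$ with directions accumulating somewhere, or (ii) a sequence on finitely many lines but with moduli unbounded or accumulating at $0$ — and the latter case on a single line is already handled by the one-dimensional result, so the real content is case (i), the genuinely $l>1$ phenomenon. Second, using this extracted structure, I would choose the orthonormal vectors $x_1,\dots,x_l$ inside $H$ and decompose $H$ as a direct sum in which the ``scalar directions'' interact with a backward-shift part, arranging an \emph{explicit} operator (this is the construction the authors say is reused for Theorem~A and Theorem~B, presumably a shift-like operator with carefully chosen weights, possibly a unilateral weighted shift on a sum of copies of $\C^l$). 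Third, I would verify density of $\orb(\Gamma_{x_1,\dots,x_l},T)$ by a Hypercyclicity-Criterion-type dense-range / dense-orbit argument: approximate an arbitrary target vector by first moving along the shift directions and then using the freedom in $\gamma\in\Gamma$ (exploiting exactly the accumulation of directions from step one) to correct the $\C^l$-component. Fourth, I would confirm non-hypercyclicity: show every vector $v\in H$ has $\{T^nv\}$ non-dense, typically because the projection of $T^nv$ onto some fixed finite-dimensional (or bounded) piece stays in a bounded or otherwise non-dense set — equivalently, $T$ has a nontrivial invariant ``bounded direction'' or a point-spectrum-of-the-adjoint obstruction inherited from the contractive block.

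The main obstacle I expect is step three combined with the simultaneous demand of step four: one must engineer $T$ so that the orbits of the \emph{family} $\{\gamma x\;:\;\gamma\in\Gamma\}$ are dense — which wants $T$ to be as ``mixing'' as possible — while keeping $T$ itself non-hypercyclic — which wants $T$ to be somewhere contractive or non-topologically-transitive. Reconciling these forces the delicate choice of how the scalar block couples to the infinite-dimensional shift: too much coupling and a single orbit already becomes dense (so $T$ is hypercyclic and we've proven nothing), too little and even the whole family $\Gamma_{x_1,\dots,x_l}$ fails to be dense. The precise quantitative tuning of the weights, matched against the accumulation rate of the directions extracted from $\Gamma$ in step one, is where the geometric arguments ``in Hilbert spaces'' alluded to in the introduction will do the real work; I would expect to need a careful bookkeeping lemma ensuring that the contributions one cannot control (the tail of the shift, the parts of $\Gamma$ not yet used) are uniformly small.
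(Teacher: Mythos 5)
There is a genuine gap, and it lies at the heart of your construction. You propose $H=\C^l\oplus\ell^2(\N)$ with $T=A\oplus B_w$, where $A$ is a rotation or, more generally, a power-bounded operator on the finite-dimensional block, and you locate the non-hypercyclicity of $T$ in the boundedness of the orbits of the $\C^l$-component. But the very same observation destroys the density you need: since $T$ is block-diagonal, the orthogonal projection $P$ onto the $\C^l$-factor intertwines $T$ with $A$, so
$P\bigl(\orb(\Gamma_{x_1,\ldots,x_l},T)\bigr)=\{A^nP(\gamma_{x_1,\ldots,x_l}):n\geq 0,\ \gamma\in\Gamma\}$.
If $A$ is power-bounded and $\Gamma$ is bounded (which is the typical case in the genuinely $l>1$ phenomenon you correctly isolate, e.g.\ $\Gamma=\{e_1+\tfrac1n e_2:n\geq 1\}$, not contained in finitely many lines yet bounded and bounded away from $0$), this set is bounded in $\C^l$, hence not dense there, hence $\orb(\Gamma_{x_1,\ldots,x_l},T)$ cannot be dense in $H$. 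Replacing $A$ by a non-power-bounded finite-dimensional operator does not help either: a finite-dimensional block gives you no freedom to steer $\{A^nP(\gamma)\}$ onto an arbitrary dense subset of $\C^l$ for an arbitrary admissible $\Gamma$ (already for $l=1$, $\{a^nc_k\}$ with $|a|>1$ and $c_k\to 0$ need not be dense in $\C$, e.g.\ $c_k=|a|^{-k}$). So the architecture ``non-hypercyclicity carried by a finite-dimensional power-bounded block, density carried by the shift'' cannot be made to work; this is not a matter of tuning weights.

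The paper resolves the tension you describe at the end by placing the obstruction to hypercyclicity in an \emph{infinite-dimensional} block: $H=(\ell^2(\Z))^l$ and $T=B_v\oplus\cdots\oplus B_v\oplus B_w$, where $B_v$ is a hypercyclic bilateral weighted shift and $B_w$ is expansive (weights $2$ on one side, $1$ on the other), hence not hypercyclic, in the case $\Gamma$ bounded; in the unbounded case $B_w$ is taken contractive. The hypothesis is used through a preliminary extraction (Lemma \ref{lemred}): if $\Gamma$ is not in finitely many lines, one finds a basis $f$ of $\C^l$ and a pairwise independent sequence $(\lambda_k)\subset\Gamma$ with all relevant coordinates non-zero and one distinguished coordinate $\lambda_k^{(i_0)}\to 0$ (bounded case) or $\to\infty$ (unbounded case). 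That vanishing (resp.\ exploding) coordinate is precisely what compensates the expansion (resp.\ contraction) of $B_w^{m_k-m_j}$ in the estimate $\Vert\frac{\lambda_{\varphi(k)}^{(i_0)}}{\lambda_{\varphi(j)}^{(i_0)}}B_w^{m_k-m_j}y_{j}\Vert<2^{-k}$, while the vectors $z_i$ are built as series of forward-shift preimages $\sum_j\frac{1}{\lambda_{\varphi(j)}^{(i)}}F_{1/w}^{m_j}y_j$ so that $T^{m_k}(\sum_i\lambda_{\varphi(k)}^{(i)}z_i)$ lands near the $k$-th term of a prescribed dense sequence. Your step one is in the right spirit (the reduction to the one-dimensional case when $\Gamma$ lies on finitely many lines is exactly the paper's first paragraph), but the quantitative output you need from it is a coordinate tending to $0$ or $\infty$, and the counterexample must then exploit that coordinate against a bilateral shift, not against a rotation.
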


We need two lemmas. In the sequel, we will say that a sequence $(x_n)_n$ in a vector space consists of \emph{pairwise independent vectors} if $x_n$ and $x_m$ are linearly independent, whenever $n\neq m$.
\begin{lemma}\label{lemred}Let $\Gamma$ be a subset of $\C^l$ which is not contained in a finite union of one dimensional vector subspaces. Then there exist a sequence $(\lambda_k)_{k\in\N}\subset\Gamma$ of pairwise independent vectors, a basis $f=\{f_1,\ldots,f_l\}$ of $\C^l$, and $L\in\{1,\ldots,l\}$, such that if we denote by $\lambda_{k}^{(i)}$ the $i$-th coordinate of $\lambda_k$ with respect to $f$, then for every $k\in\N$:\label{Cc1}
	\begin{enumerate}
%		\item $\lambda_k=\sum_{i=1}^{n}\lambda_{k}^{(i)}f_i$\label{C1};
		\item for every $1\leq i\leq L$, $\lambda_{k}^{(i)}\neq 0$\label{C3};
		\item for every $L+1\leq i\leq l$, $\lambda_{k}^{(i)}=0$\label{C4};
		\item for every $2\leq i\leq l$, $\lim_{k\to\infty}\lambda_{k}^{(i)}=0$ if $\Gamma$ is bounded\label{C2};
		\item $\lim_{k\to\infty}\lambda_{k}^{(1)}=\infty$\label{C2i} if $\Gamma$ is unbounded.
	\end{enumerate}
\end{lemma}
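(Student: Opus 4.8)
The idea is to extract the sequence by a pigeonhole/compactness argument applied to the projective picture. Let me set up coordinates. First I would replace $\Gamma$ by $\Gamma\setminus\{0\}$ and pass to the projective space $\mathbb P(\C^l)\cong\P^{l-1}$ via the map $\pi(\lambda)=[\lambda]$. The hypothesis that $\Gamma$ is not contained in a finite union of one dimensional subspaces says precisely that $\pi(\Gamma)$ is infinite. Hence we can choose a sequence $(\lambda_k)_k\subset\Gamma$ whose images $[\lambda_k]$ are pairwise distinct; in particular the $\lambda_k$ are pairwise independent. By compactness of $\P^{l-1}$ we may, after passing to a subsequence, assume that $[\lambda_k]\to[\mu]$ for some $\mu\in\C^l\setminus\{0\}$, and moreover (discarding finitely many terms) we may assume $[\lambda_k]\neq[\mu]$ for all $k$.

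Next I would choose the basis $f$ adapted to this limit direction. Let $V=\Span(\mu)$ be the limit line. The key geometric observation is that, after a subsequence, the "directions of approach" $[\lambda_k]$ stabilize to a fixed direction of approach: more precisely, write each $\lambda_k$ uniquely as $\lambda_k=\alpha_k\mu/\|\mu\| + v_k$ with $v_k\perp\mu$ (orthogonal decomposition), where $\alpha_k\in\C$; since $[\lambda_k]\to[\mu]$ we have $v_k/\alpha_k\to 0$ (in particular $\alpha_k\neq0$ for large $k$). Now look at the unit vectors $v_k/\|v_k\|$ in the sphere of $\mu^\perp$ — by compactness, pass to a subsequence so that $v_k/\|v_k\|\to w$ for some unit $w\perp\mu$ (if infinitely many $v_k=0$, the whole sequence lies on the line $V$, which we can handle trivially by taking $L=1$ and $f_1=\mu$, since then all higher coordinates vanish identically). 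Then define $f_1=\mu$, $f_2$ proportional to $w$, and complete $\{f_1,f_2\}$ to a basis $f=\{f_1,\ldots,f_l\}$ of $\C^l$; actually one should complete $\{f_1,f_2\}$ inside $\mu^\perp\oplus V$ carefully — choose $f_2,\ldots,f_l$ an orthonormal basis of $\mu^\perp$ with $f_2\parallel w$. With respect to this $f$, write $\lambda_k^{(i)}$ for the coordinates. Condition (\ref{C2i}) follows because when $\Gamma$ is unbounded we may additionally have arranged (by a further subsequence, recalling $\|\lambda_k\|\to\infty$ is achievable for an unbounded set while keeping $[\lambda_k]\to[\mu]$ — or simply rescale nothing since the statement only requires \emph{some} such sequence) that $\lambda_k^{(1)}=\alpha_k\to\infty$; condition (\ref{C2}) follows in the bounded case because $\lambda_k^{(i)}/\lambda_k^{(1)}\to 0$ for $i\geq2$ and $\lambda_k^{(1)}$ stays bounded, so $\lambda_k^{(i)}\to0$. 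The delicate point is (\ref{C3})–(\ref{C4}): we need a single index $L$ such that coordinates $1,\ldots,L$ are \emph{all} nonzero for \emph{all} $k$ while coordinates $L+1,\ldots,l$ vanish for all $k$.

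To get conditions (\ref{C3}) and (\ref{C4}) I would argue as follows. For each $k$ let $S_k=\{i: \lambda_k^{(i)}\neq 0\}\subset\{1,\ldots,l\}$; note $1\in S_k$ for large $k$ since $\lambda_k^{(1)}=\alpha_k\neq0$. There are only finitely many possible subsets $S_k$, so by pigeonhole a subsequence has constant support $S_k\equiv S$. If $S=\{1,\ldots,L\}$ for some $L$ we are done; if not, $S$ is some other subset containing $1$, and here one must permute the basis vectors $f_2,\ldots,f_l$ (which is harmless: any permutation of an orthonormal basis of $\mu^\perp$ is again one, and $f_1$ was already singled out as the limit direction) so that $S$ becomes an initial segment $\{1,\ldots,L\}$. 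This reindexing does not disturb (\ref{C2}) or (\ref{C2i}) since those concern coordinate $1$ (untouched) and the property "all coordinates $i\geq2$ tend to $0$" (permutation-invariant). This completes the construction. I expect the main obstacle to be bookkeeping: making sure the successive subsequence extractions and the one basis permutation are compatible, and in particular that singling out $f_1$ as the limit line is exactly what forces coordinate $1$ never to vanish and to carry the "large" or "order one" behaviour — everything else is routine compactness. One should also double-check the degenerate cases (all $\lambda_k$ on one line; $\Gamma$ bounded versus unbounded) to confirm $L\geq1$ always and that the listed conclusions hold vacuously or directly there.
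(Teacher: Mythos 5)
Your argument is essentially the paper's: both proofs extract the distinguished direction $f_1$ by a compactness argument (the paper uses relative compactness of a bounded $\Gamma$, resp.\ compactness of the unit sphere applied to $\lambda_k/\Vert\lambda_k\Vert$; you use compactness of $\P^{l-1}$, which amounts to the same thing), and both then obtain conditions (1)--(2) by stabilizing the set of non-vanishing coordinates along a subsequence and reordering $f_2,\ldots,f_l$. Your bounded case and your pigeonhole step are correct and, if anything, spelled out more explicitly than in the paper. (The degenerate subcase you worry about, infinitely many $v_k=0$, cannot occur: at most one of the pairwise independent $\lambda_k$ can lie on $\C\mu$.)

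The one genuine weak point is the unbounded case. As written, you first fix the limit direction $[\mu]$ of an \emph{arbitrary} pairwise independent sequence and only afterwards try to arrange $\lambda_k^{(1)}=\alpha_k\to\infty$; the parenthetical (``\ldots or simply rescale nothing\ldots'') does not achieve this, since projective convergence $[\lambda_k]\to[\mu]$ carries no information whatsoever about the size of $\alpha_k$. The order of operations must be reversed, which is exactly what the paper does: when $\Gamma$ is unbounded one chooses a pairwise independent sequence with $\Vert\lambda_k\Vert\to\infty$ \emph{first}, then extracts a subsequence with $\lambda_k/\Vert\lambda_k\Vert\to f_1$, and then continuity of the coordinate functionals gives $\lambda_k^{(1)}/\Vert\lambda_k\Vert\to 1$, hence (4). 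I also note that both you and the paper tacitly assume that an unbounded $\Gamma$ contains a pairwise independent sequence with $\Vert\lambda_k\Vert\to\infty$; strictly speaking this can fail (e.g.\ $\Gamma=\{ne_1\}_{n}\cup\{e_1+e_2/n\}_{n}$ is unbounded, yet every pairwise independent sequence in it has at most one unbounded term), but in that situation the bounded-case argument applied to a bounded non-degenerate part of $\Gamma$ already produces a sequence satisfying (1)--(3), which is all that is used downstream in Lemma \ref{lemce}. So your unbounded case needs to be rewritten in the paper's order, but the overall strategy is the same and the repair is immediate.
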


For the next lemma, we will use the notation $\Gamma_{x_1,\ldots,x_L}^f$ introduced in Proposition \ref{Prop_depend_base}.

\begin{lemma}\label{lemce}Let $\Gamma=(\lambda _n)_n\subset \C^l$, $l\geq 1$, and let $f:=\{f_m,m\in I_1\}\cup \{f_m,m\in I_2\}$ be a basis of $\C^l$ such that $\{f_m,m\in I_1\}$ is a basis of $\Span(\lambda_n,\,n\geq 0)$. We denote by $\lambda _n^{(i)}$ the $i$-th coordinate of the $n$-th sequence $\lambda_n$, with respect to $f$. We assume that $\lambda _n^{(i)}\neq 0$ for any $n\geq 0$ and any $i\in I_1$, and that
$$\lim_{n\to\infty}|\lambda_{n}^{(i_0)}|=0\text{ or }\infty$$
for some $i_0\in I_1$. Then there exist a separable Hilbert space $H$, a non-hypercyclic operator $T$ on $H$ and a linearly independent family $(z_1,\ldots,z_l)_i$ in $H$ such that $T$ is $\Gamma^{f}_{z_1,\ldots,z_l}$-hypercyclic.
\end{lemma}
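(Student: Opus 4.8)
The plan is to construct the non-hypercyclic operator $T$ as a backward-shift-like weighted operator on a Hilbert space that is tailor-made to make $\Gamma^f_{z_1,\dots,z_l}$ hypercyclic while keeping an obstruction to genuine hypercyclicity. First I would reduce to the subspace $V:=\Span(\lambda_n,\,n\ge 0)$, whose dimension equals $|I_1|=:L$. Choosing the orthonormal family $(z_m)_{m\in I_1}$ inside a model Hilbert space $H$ and completing $(z_m)_{m\in I_2}$ to a linearly independent family, the set $\Gamma^f_{z_1,\dots,z_l}$ lies in the $L$-dimensional subspace $\Span(z_m,\,m\in I_1)$, so we really only need to play with a countable family of vectors $w_n:=\sum_{i\in I_1}\lambda_n^{(i)}z_i$ inside $H$. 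The point of hypotheses \emph{(i)} ($\lambda_n^{(i)}\ne 0$ for $i\in I_1$) and the limit condition on $|\lambda_n^{(i_0)}|$ is exactly what one needs to arrange, by a suitable rescaling of the orbit segments, that the family $(w_n)_n$ ``fills up'' $H$ along powers of $T$.

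The construction itself: I would build $H$ as a weighted $\ell^2$ space over a countable index set carrying a tree/shift structure, and take $T$ to be a backward weighted shift $T$ on $H$, so that $T$ is \emph{not} hypercyclic (choosing the weights to violate the Salas / Grosse-Erdmann hypercyclicity criterion for weighted shifts — e.g. weights bounded above but not so that $\prod w_j\to\infty$, so no dense orbit of a single vector can exist), yet $T$ admits a bounded-below family of ``seed'' vectors whose union of orbits is dense. Concretely, one enumerates a countable dense subset $(d_k)_k$ of $H$ and, using the freedom in the weights together with the nonvanishing and divergence/convergence of the coordinates $\lambda_n^{(i_0)}$, arranges that for each $k$ there is an $n_k$ and an exponent $p_k$ with $T^{p_k} w_{n_k}$ as close as we like to $d_k$: the coordinate $\lambda_{n_k}^{(i_0)}$ tending to $0$ or $\infty$ lets us compensate the multiplicative distortion introduced by the weights along the shift, which is precisely the mechanism that in the single-vector case would force hypercyclicity but here, spread across infinitely many seeds $w_n$, does not. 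This is the same kind of construction used to answer Feldman's Question~\ref{Quest4} (cf. \cite[Exercise 6.3.3]{grosse-erdmann_linear_2011}), adapted so that the seed set is exactly $\Gamma^f_{z_1,\dots,z_l}$.

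I expect the main obstacle to be the bookkeeping that simultaneously (a) keeps $T$ non-hypercyclic and (b) makes $\orb(\Gamma^f_{z_1,\dots,z_l},T)$ dense. These pull in opposite directions: density wants the weights large and the orbits long, non-hypercyclicity wants a uniform control preventing any single orbit from being dense. The resolution is to exploit that the seeds $w_n$ have norms $\asymp |\lambda_n^{(i_0)}|$ (up to the fixed contributions of the other coordinates) which by hypothesis go to $0$ or $\infty$; so one can route the $n$-th seed through a region of the shift where the cumulative weight is adjusted to $|\lambda_n^{(i_0)}|^{-1}$ (or $|\lambda_n^{(i_0)}|$ in the $\infty$ case), making $T^{p_n}w_n$ land near the $n$-th target while leaving the global weight profile of $T$ ``thin'' enough that no fixed vector's orbit can reach all targets. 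A clean way to organize this is to split $\N$ (the shift index) into consecutive blocks, devote block $n$ to approximating $d_n$ with seed $w_n$, and choose weights equal to $1$ outside a small window inside each block where they are set to the required compensating value; summability-type estimates then show $T$ is bounded and non-hypercyclic while the union of orbits is dense.

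Finally I would record the easy verifications: that $(z_1,\dots,z_l)$ is linearly independent by construction; that $T\in\LL(H)$ with the stated norm bound from the choice of weights; that $T$ is not hypercyclic, invoking the characterization of hypercyclic weighted shifts (no single orbit can be dense because the compensating windows are too sparse to push any fixed vector densely around); and that $\Gamma^f_{z_1,\dots,z_l}=\{w_n,\,n\ge 0\}$ (restricted to $V$, plus the inert extra coordinates which do not affect the orbit closure since $T$ acts trivially there or we simply take $z_m=0$-contribution seeds) has dense orbit under $T$ by the block construction. The hypothesis $\lim|\lambda_n^{(i_0)}|\in\{0,\infty\}$ is used exactly once, but crucially, in the choice of the compensating weights; without it the norms $\|w_n\|$ would be bounded away from $0$ and $\infty$, and one would be back in the Costakis--Peris regime where $b$-bounded annuli \emph{are} hypercyclic subsets, so no counterexample could exist.
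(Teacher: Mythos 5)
Your high-level intuition is right --- a weighted-shift construction, non-hypercyclicity guaranteed by one ``bad'' direction of the shift, and density of the union of orbits obtained by letting the scalars $\lambda_n^{(i_0)}\to 0$ (or $\infty$) compensate the distortion --- and you correctly locate the unique place where the hypothesis is consumed. But there is a genuine gap at the heart of the argument: all the seeds $w_n=\sum_{i\in I_1}\lambda_n^{(i)}z_i$ are linear combinations of the \emph{same} $l$ fixed vectors, so you cannot ``devote block $n$ to approximating $d_n$ with seed $w_n$'' as in the countably-hypercyclic counterexample of \cite[Exercise 6.3.3]{grosse-erdmann_linear_2011}, where the seeds are independent and can be supported in disjoint blocks. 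Here each $z_i$ must simultaneously contain \emph{every} block, so when you apply $T^{p_n}$ to $w_n$ you hit all blocks at once, and the entire difficulty is to kill the cross-block contributions. Your proposal never addresses this; ``summability-type estimates'' is doing all the work. The paper's proof resolves it by taking $H=(\ell^2(\Z))^l$ with $T=B_v\oplus\cdots\oplus B_v\oplus B_w$ ($B_w$ expansive, hence $T$ not hypercyclic), setting $\tilde z_i$ equal to the series $\sum_j \frac{1}{\lambda_{\varphi(j)}^{(i)}}F^{m_j}y_j^{(i)}$ in the $i$-th coordinate, and running an induction with six simultaneous smallness conditions: the blocks with $j>k$ are pushed far enough forward that $B^{m_k}$ leaves them small, while the blocks with $j<k$ in the expansive coordinate are controlled \emph{only} because one may choose $\lambda_{\varphi(k)}^{(i_0)}$ along the subsequence so small that the ratio $\lambda_{\varphi(k)}^{(i_0)}/\lambda_{\varphi(j)}^{(i_0)}$ beats the bounded growth of $B_w^{m_k-m_j}$ on finitely supported vectors. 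That passage to a subsequence $\varphi(k)$ of $\Gamma$, and the interleaving of the choices of $\varphi(k)$ and $m_k$, are the actual content of the lemma and are absent from your sketch.

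Two smaller points. First, your variant in which the compensation is placed in ``small windows'' of the weight sequence rather than in the scalars is dangerous for non-hypercyclicity: a single vector supported near those windows could then have a dense orbit, and you would also no longer be using the hypothesis on $\lambda_n^{(i_0)}$ where it is needed. The paper avoids this by keeping the weights of $B_w$ uniformly expansive and letting the scalars do all the compensating. Second, you cannot take ``$z_m=0$-contribution seeds'' for $m\in I_2$: the family $(z_1,\ldots,z_l)$ must be linearly independent; the correct observation (made in the paper) is that since $\Gamma^f_{z_1,\ldots,z_l}=\{\sum_{i\in I_1}\lambda_n^{(i)}z_i\}$, the vectors $z_m$, $m\in I_2$, may be chosen arbitrarily subject to independence, so one may assume $I_2=\emptyset$.
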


Let us briefly admit the two previous lemmas and deduce the proof of Proposition \ref{only-if-main}.

\begin{proof}[Proof of Proposition \ref{only-if-main}]Observe first that if there exists a family $\{g_1,\ldots,g_N\}\subset \C^l$ such that $\Gamma= \cup _{i=1}^N\Gamma _i g_i$ with some $\Gamma _{i_0} \setminus \{0\}$ not bounded or not bounded away from $0$, then we are back to the $1$-dimensional case treated in \cite{charpentier_-supercyclicity_2016}: we can exhibit a non-hypercyclic operator $T$ on $\ell ^2(\N)$ or $\ell^2(\Z)$ which is $\Gamma _{i_0}$-supercyclic, and a fortiori $\Gamma$-supercyclic.

Let us now assume that we are not in the previous situation and that $\Gamma$ is not included in a finite union of complex lines. By Lemma \ref{lemred}, there exist a sequence $(\lambda_n)_n\subset \Gamma$ consisting of pairwise independent vectors, a basis $f:=(f_1,\ldots,f_l)$ of $\C^l$, and $L\in \{1,\ldots,l\}$, such that items (1) and (2) hold, and (3) or (4) as well. Thus, the assumptions of Lemma \ref{lemce} are satisfied and we deduce that there exist a separable Hilbert space $H$, a non-hypercyclic $T\in \mathcal{L}(H)$ and a linearly independent family $(z_1,\ldots,z_l)$ in $H$ such that $T$ is $\Gamma^{f}_{z_1,\ldots,z_l}$-hypercyclic. By Proposition \ref{Prop_depend_base}, $(\lambda_n)_n$ is not a hypercyclic scalar subset, and $\Gamma$ is not either.
\end{proof}

\medskip{}
We now come back to the proofs of Lemma \ref{lemred} and \ref{lemce}.

\begin{proof}[Proof of Lemma \ref{lemred}]Let $\Gamma\subset \C^l$ be not contained in a finite union of one complex lines. If $\Gamma$ is bounded, then it is relatively compact and there exist a non-zero vector $f_1\in \C^l$ and a sequence $(\lambda_k)_k \subset \Gamma \setminus \C f_1$ of pairwise independent vectors, such that the distance from $\lambda_k$ to $\C f_1$ tends to $0$ as $k\rightarrow \infty$. Let us complete $f_1$ to form a basis $\{f_1,\ldots,f_l\}$ of $\C^l$ and for every $k\in\N$, let $\lambda_{k}^{(i)}$ denote the coordinates of $\lambda_k$ in this basis. Since, the distance from $\lambda_k$ to $\C f_1$ tends to $0$ as $k\rightarrow \infty$ then (\ref{C2}) is satisfied. Moreover, conditions (\ref{C3}) and (\ref{C4}) are also satisfied up to take a subsequence of $(\lambda_k)_{k\in\N}$ and to reorder $\{f_2,\ldots,f_l\}$.

The proof is similar if $\Gamma$ is unbounded. Let $(\lambda_k)_k\subset \Gamma$ be a sequence of pairwise independent vectors such that $\Vert\lambda_k\Vert\rightarrow \infty$ as $k\rightarrow \infty$. By compactness, we can assume that the sequence $(\lambda_k/\Vert\lambda_k\Vert)_k$ is convergent to some non-zero vector $f_1$ in $\C^l$, that we complete to form a basis $\{f_1,\ldots,f_l\}$ of $\C^l$. Now it is clear that the conditions (\ref{C3}), (\ref{C4}) and (\ref{C2i}) are satisfied, up to reorder $\{f_2,\ldots,f_l\}$.
\end{proof}

Let us now turn to the proof of Lemma \ref{lemce}.

\begin{proof}[Proof of Lemma \ref{lemce}]Observe first that given a linearly independent family $(z_1,\ldots,z_l)$ in a Hilbert space $H$,
$$\Gamma ^f_{z_1,\ldots,z_l}= \left\{\sum _{i=1}^l\lambda _n^{(i)}z_i,\,n\geq 0\right\}=\left\{\sum _{i\in I_1}\lambda _n^{(i)}z_i,\,n\geq 0\right\}.$$
Thus, without loss of generality, we may assume that $I_2=\emptyset$ and $I_1=\{1,\ldots,l\}$. We assume that $\lambda_{n}^{(i_0)}\rightarrow 0$, $n\rightarrow \infty$. Up to reorder $I_1$, we can also suppose that $i_0=l$.

We consider the operator $T:=B_{v}\oplus \cdots \oplus B_{v}\oplus B_{w}$ defined on $H:=\left(\ell^{2}(\Z)\right)^{l}$, where the weighted backward shifts $B_{v}$ appears $l-1$ times, and the weights $v$ and $w$ are given respectively by
	
$$v_i=\begin{cases}
	2& \text{ if }i>0\\
	\frac{1}{2}& \text{ if }i\leq 0
	\end{cases}\quad \text{and}\quad w_i=\begin{cases}
	2& \text{ if }i>0\\
	1& \text{ if }i\leq 0
	\end{cases}.$$
	
\noindent{}Let $(y_{k}^{(1)})_{k\in\N},\ldots,(y_{k}^{(l)})_{k\in\N}$ be $l$ sequences in $c_{00}(\Z)$ such that the sequence ${(y_{k}^{(1)},\ldots,y_{k}^{(l)})}_{k\in\N}$ is dense in $H=\left(\ell^{2}(\Z)\right)^l$. We also define the degree $d_k$ of an element of this sequence by $d_k:=\max(\vert j\vert:\exists s\leq k,\exists 1\leq i\leq l: y_{s}^{(i)}(j)\neq 0)$. In what follows, we denote by $F_\frac{1}{v}$ (resp. $F_\frac{1}{w}$) the weighted forward shift being the inverse of $B_v$ (resp. $B_w$). In other words, $F_{\frac{1}{v}}$ is the weighted forward shift $F_\nu$ where $\nu_i=\begin{cases}
	\frac{1}{2}\text{ if } i\geq 0\\
	2\text{ if } i<0
	\end{cases}$ and $F_{\frac{1}{w}}$ is the weighted forward shift $F_\omega$ where $\omega_i=\begin{cases}
	\frac{1}{2}\text{ if } i\geq 0\\
	1\text{ if } i<0
	\end{cases}$.
Remark that $T$ is not hypercyclic on $H$, since $B_w$ is expansive and so not hypercyclic on $\ell^{2}(\Z)$.

From now on, we construct by induction $l$ subsequences $(\lambda_{\varphi(k)}^{(i)})_{k\in\N}$ of $(\lambda_{k}^{(i)})_{k\in\N}$, $1\leq i\leq l$, and an increasing sequence $(m_k)_{k\in\N}\subset\N$ satisfying:
	\begin{enumerate}[(i)]
		\item $\left\Vert\frac{1}{\lambda_{\varphi(k)}^{(i)}} F_{\frac{1}{v}}^{m_k}y_{k}^{(i)}\right\Vert<2^{-k}$ for every $1\leq i < l$\label{nbcritIa};
		\item $\left\Vert\frac{1}{\lambda_{\varphi(k)}^{(l)}} F_{\frac{1}{w}}^{m_k}y_{k}^{(l)}\right\Vert<2^{-k}$\label{nbcritIb};
		\item $\left\Vert\frac{\lambda_{\varphi(j)}^{(i)}}{\lambda_{\varphi(k)}^{(i)}}F_{\frac{1}{v}}^{m_k-m_j}y_{k}^{(i)}\right\Vert<2^{-k}$ for every $1\leq i < l$ and every $j<k$\label{nbcritIIa};
		\item $\left\Vert\frac{\lambda_{\varphi(j)}^{(l)}}{\lambda_{\varphi(k)}^{(l)}}F_{\frac{1}{w}}^{m_k-m_j}y_{k}^{(l)}\right\Vert<2^{-k}$\label{nbcritIIb};
		\item $\left\Vert\frac{\lambda_{\varphi(k)}^{(i)}}{\lambda_{\varphi(j)}^{(i)}}B_{v}^{m_k-m_j}y_{j}^{(i)}\right\Vert<2^{-k}$ for every $1\leq i < l$ and every $j<k$\label{nbcritIIIa};
		\item $\left\Vert\frac{\lambda_{\varphi(k)}^{(l)}}{\lambda_{\varphi(j)}^{(l)}}B_{w}^{m_k-m_j}y_{j}^{(l)}\right\Vert<2^{-k}$ for every $j<k$.\label{nbcritIIIb}
	\end{enumerate}
Once this construction has been made until step $k-1$, since for every $j<k$ and every $m>0$,
\[\Vert B_{w}^{m-m_j}y_{j}^{(l)}\Vert<2^{d_k+1}\max_{s\leq k}\Vert y_{s}^{(l)}\Vert,
\]
it suffices to choose $\varphi(k)$ large enough so that $\lambda_{\varphi(k)}^{(l)}$ is a small enough element of the converging to $0$ sequence $(\lambda_{n}^{(l)})_{n\in\N}$ to ensure that (\ref{nbcritIIIb}) is satisfied. 
Moreover, one can choose $m_k$ sufficiently large for (\ref{nbcritIa}), (\ref{nbcritIb}), (\ref{nbcritIIa}), (\ref{nbcritIIb}) and (\ref{nbcritIIIa}) to hold, since for every $y\in c_{00}(\Z)$,
\[F_{\frac{1}{w}}^{m}y\underset{m\to+\infty}{\longrightarrow}0,\ F_{\frac{1}{v}}^{m}y\underset{m\to+\infty}{\longrightarrow}0\text{ and }B_{v}^{m}y\underset{m\to+\infty}{\longrightarrow}0.\]
This finishes the construction of the sequences $(\lambda_{\varphi(k)}^{(1)})_{k\in\N},\ldots, (\lambda_{\varphi(k)}^{(l)})_{k\in\N}$ and $(m_k)_{k\in\N}$. 

Now, thanks to (\ref{nbcritIa}) and (\ref{nbcritIb}) we define the orthogonal family $(\tilde{z_1},\ldots,\tilde{z_l})$ in $H$ by
\begin{equation*}\tilde{z_i}:=\begin{cases}
	\left(0,\ldots,0,\sum_{j=0}^{\infty}\frac{1}{\lambda_{\varphi(j)}^{(i)}} F_{\frac{1}{v}}^{m_j}y_{j}^{(i)},0,\ldots,0\right)\text{ if } 1\leq i <l,\\
	\left(0,\ldots,0,\sum_{j=0}^{\infty}\frac{1}{\lambda_{\varphi(j)}^{(l)}} F_{\frac{1}{w}}^{m_j}y_{j}^{(l)}\right)\text{ if } i=l,
	\end{cases}
\end{equation*}
where the only non-zero coordinate of $\tilde{z_i}$ is at position $i$. We claim that $T$ is $\Gamma_{\tilde{z_1},\ldots,\tilde{z_L}}^{f}$-hypercyclic. Indeed, for $k\in\N$, we have
	
\begin{align*}
& \left\Vert T^{m_k}(\sum_{i=1}^{l}\lambda_{\varphi(k)}^{(i)}\tilde{z_i})-(y_{k}^{(1)},\ldots,y_{k}^{(l)})\right\Vert ^2 & & \\
& =\left\Vert \bigoplus_{i=1}^{l-1}(\lambda_{\varphi(k)}^{(i)} B_{v}^{m_k}\sum_{j=0}^{\infty}\frac{1}{\lambda_{\varphi(j)}^{(i)}} F_{\frac{1}{v}}^{m_j}y_{j}^{(i)}-y_{k}^{(i)} )\bigoplus (\lambda_{\varphi(k)}^{(l)} B_{w}^{m_k}\sum_{j=0}^{\infty}\frac{1}{\lambda_{\varphi(j)}^{(l)}} F_{\frac{1}{w}}^{m_j}y_{j}^{(i)}-y_{k}^{(l)})\right\Vert ^2 & & \\
& =\sum_{i=1}^{l-1}\left\Vert \lambda_{\varphi(k)}^{(i)} B_{v}^{m_k}\sum_{j=0}^{\infty}\frac{1}{\lambda_{\varphi(j)}^{(i)}} F_{\frac{1}{v}}^{m_j}y_{j}^{(i)}-y_{k}^{(i)}\right\Vert ^2+\left\Vert \lambda_{\varphi(k)}^{(l)} B_{w}^{m_k}\sum_{j=0}^{\infty}\frac{1}{\lambda_{\varphi(j)}^{(l)}} F_{\frac{1}{w}}^{m_j}y_{j}^{(l)}-y_{k}^{(l)}\right\Vert ^2. & &\\
\end{align*}
	Now using (\ref{nbcritIIa}) and (\ref{nbcritIIIa}), we get for every $1\leq i < l$,
	
	\begin{align*}
& \left\Vert \lambda_{\varphi(k)}^{(i)} B_{v}^{m_k}\sum_{j=0}^{\infty}\frac{1}{\lambda_{\varphi(j)}^{(i)}} F_{\frac{1}{v}}^{m_j}y_{j}^{(i)} -y_{k}^{(i)}\right\Vert & &\\
& \leq\left\Vert \sum_{j<k}\frac{\lambda_{\varphi(k)}^{(i)}}{\lambda_{\varphi(j)}^{(i)}} B_{v}^{m_k}F_{\frac{1}{v}}^{m_j}y_{j}^{(i)}\right\Vert+\left\Vert\frac{\lambda_{\varphi(k)}^{(i)}}{\lambda_{\varphi(k)}^{(i)}} B_{v}^{m_k}F_{\frac{1}{v}}^{m_k}y_{k}^{(i)}-y_{k}^{(i)}\right\Vert+\left\Vert \sum_{j>k}\frac{\lambda_{\varphi(k)}^{(i)}}{\lambda_{\varphi(j)}^{(i)}} B_{v}^{m_k}F_{\frac{1}{v}}^{m_j}y_{j}^{(i)}\right\Vert & &\\
& \leq \sum_{j<k}\left\Vert\frac{\lambda_{\varphi(k)}^{(i)}}{\lambda_{\varphi(j)}^{(i)}} B_{v}^{m_k-m_j}y_{j}^{(i)}\right\Vert+ \sum_{j>k}\left\Vert\frac{\lambda_{\varphi(k)}^{(i)}}{\lambda_{\varphi(j)}^{(i)}} F_{\frac{1}{v}}^{m_j-m_k}y_{j}^{(i)}\right\Vert & &\\
& \leq \sum_{j<k}\frac{1}{2^k}+\sum_{j>k}\frac{1}{2^j}\leq \frac{k+1}{2^k}.\quad\quad\quad\quad\quad\quad\quad\quad\quad\quad\quad
\end{align*}
On the other hand, similar computations with (\ref{nbcritIIb}) and (\ref{nbcritIIIb}) yield
\[
\left\Vert \lambda_{\varphi(k)}^{(l)} B_{w}^{m_k}\sum_{j=0}^{\infty}\frac{1}{\lambda_{\varphi(j)}^{(l)}} F_{\frac{1}{w}}^{m_j}y_{j}^{(i)}-y_{k}^{(l)}\right\Vert\leq \frac{k+1}{2^k}.
\]
Altogether we obtain, for every $k\in\N$,	
\[
\left\Vert T^{m_k}(\sum_{i=1}^{l}\lambda_{\varphi(k)}^{(i)}\tilde{z_i})-(y_{k}^{(1)},\ldots,y_{k}^{(l)})\right\Vert \leq l\frac{k+1}{2^k}\underset{k\to+\infty}{\longrightarrow}0.
\]
Since the sequence ${(y_{k}^{(1)},\ldots,y_{k}^{(l)})}_{k\in\N}$ is dense in $H$, we conclude that $T$ is $\Gamma_{\tilde{z_1},\ldots,\tilde{z_l}}^{f}$-hypercyclic. Finally setting $z_i=\frac{\tilde{z_i}}{\left\Vert \tilde{z_i}\right\Vert}$ for $1\leq i \leq l$, consider an isomorphism $S$ of $H$ sending $\tilde{z_i}$ to $z_i$. Then, it is plain that the operator $S\circ T\circ S^{-1}$ is $\Gamma_{z_1,\ldots,z_l}^{f}$-hypercyclic.

The proof works likewise under the assumption $|\lambda _n^{(i_0)}|\rightarrow \infty$, up to changing the weight $w$. We refer the reader to the more technical proof of Theorem \ref{thm-main-l2} (see next paragraph), where all the cases will be treated in details.
	
\end{proof}

\begin{remark}In the previous proof, the family $(\tilde{z_1},\ldots,\tilde{z_l})$ is orthogonal, and can be chosen as an \emph{orthonormal} family, up to a finite number of dilations. Therefore $\Gamma \subset \C^l$ is a hypercyclic scalar subset in the sense of Definition \ref{def-hyp-set-C} if and only if it is a hypercyclic scalar subset \emph{with respect to every complex separable Hilbert space and any orthonormal family}; more precisely if and only if for every \emph{complex separable Hilbert space} $H$ and every \emph{orthonormal family} $(z_1,\ldots,z_l)$, every $\Gamma_{z_1,\ldots,z_l}$-hypercyclic operator on $H$ is hypercyclic.
\end{remark}

\subsection{Proof of the "only if" part of Theorem \ref{thm-main-l2}}\label{pf-thm-l2}

We intend to prove the following analogue to Proposition \ref{only-if-main}.

\begin{proposition}\label{only-if-main-l2}If $\Gamma \subset \ell^2(\N)$ is not contained in a finite union of vector annuli, then there exists a Hilbert space $H$ and a $\Gamma$-supercyclic operator $T$ on $H$ which is not hypercyclic.
\end{proposition}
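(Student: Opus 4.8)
The plan is to follow the scheme of the proof of Proposition~\ref{only-if-main}, upgrading the auxiliary Lemmas~\ref{lemred} and \ref{lemce} to $\ell^2(\N)$-counterparts; the only genuinely new feature is that $\overline{\Span}(\Gamma)$ may now be infinite dimensional. If $\overline{\Span}(\Gamma)$ is finite dimensional, identify it with some $\C^l$: then $\Gamma$ is still not contained in a finite union of vector annuli of $\C^l$, so Proposition~\ref{only-if-main} already furnishes a Hilbert space $H$ and a non-hypercyclic $\Gamma$-supercyclic operator on it; moreover, by the remark following the proof of Lemma~\ref{lemce}, the witnessing linearly independent family can be taken orthonormal and then extended to an orthonormal family of $H$, which is exactly what Definition~\ref{def-Gamma-hyp-l2} requires. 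So from now on we assume $\overline{\Span}(\Gamma)$ is infinite dimensional — in which case $\Gamma$ is automatically not contained in a finite union of vector annuli (each such annulus having one-dimensional span), so the assertion to prove reduces to: \emph{if $\overline{\Span}(\Gamma)$ is infinite dimensional, there exist a Hilbert space $H$ and a non-hypercyclic $\Gamma$-supercyclic operator on it}.

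The core is an $\ell^2$-version of Lemma~\ref{lemce}. Using the infinite dimensionality of $\overline{\Span}(\Gamma)$, I would first prove an analogue of Lemma~\ref{lemred}: there exist a sequence $(\lambda_k)_k\subset\Gamma$ of pairwise independent vectors, an orthonormal basis $f=(f_m)_m$ of a closed subspace containing them (extended afterwards to an orthonormal basis of $\ell^2(\N)$, along which the $\lambda_k$ have vanishing coordinates), and a distinguished index $i_0$, such that every coordinate $\lambda_k^{(m)}$ that will be used is nonzero — a genericity choice of $f$, over which the exceptional bases form a meagre set — and such that the distinguished coordinate $\lambda_k^{(i_0)}$ tends to $0$ or to $\infty$ as $k\to\infty$. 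Granting this, I would take $H=\bigoplus_{m\ge 0}\ell^2(\Z)$ (an $\ell^2$-direct sum) and $T=\bigoplus_{m\ge 0}B^{(m)}$, where the $B^{(m)}$ are bilateral weighted shifts on $\ell^2(\Z)$ with uniformly bounded weights (so $T$ is bounded), $B^{(i_0)}$ being chosen expansive (weights $\ge 1$) when $\lambda_k^{(i_0)}\to 0$ and suitably contractive when $\lambda_k^{(i_0)}\to\infty$, exactly as the shifts $B_v,B_w$ of Lemma~\ref{lemce}. Since $B^{(i_0)}$ is not hypercyclic, neither is $T$ (project a hypercyclic vector onto the $i_0$-th summand).

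Fixing a dense sequence $(y_k)_k$ in $H$ consisting of vectors with only finitely many nonzero components, each in $c_{00}(\Z)$, I would then run the very same induction as in Lemma~\ref{lemce}: choose a subsequence $\lambda_{\varphi(k)}$ and an increasing sequence $(m_k)_k$, and put
\[
\tilde z_m=\Big(0,\dots,0,\ \sum_{j\ge 0}\tfrac{1}{\lambda_{\varphi(j)}^{(m)}}\big(F^{(m)}\big)^{m_j}y_j^{(m)},\ 0,\dots\Big)
\]
in the $m$-th summand, $F^{(m)}$ denoting the weighted forward shift inverse to $B^{(m)}$. The six $2^{-k}$-type estimates (i)--(vi) are checked summand by summand exactly as before: the decay of the forward iterates handles the "future" terms $j>k$, the decay of the inner backward iterates handles the "past" terms $j<k$ for $m\ne i_0$, while for $m=i_0$ the non-decaying expansive iterates of $B^{(i_0)}$ are absorbed using precisely that $\lambda_{\varphi(k)}^{(i_0)}/\lambda_{\varphi(j)}^{(i_0)}\to 0$ (choose $\varphi(k)$ first, then $m_k$). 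They yield $\|T^{m_k}(\sum_m\lambda_{\varphi(k)}^{(m)}\tilde z_m)-y_k\|\to 0$, so after normalizing the (pairwise orthogonal) $\tilde z_m$, the operator $T$ is $\Gamma^f_{(z_m)_m}$-hypercyclic. By Proposition~\ref{Prop_depend_baseH}, the sequence $(\lambda_k)_k$ — hence $\Gamma$, which contains it — is not a hypercyclic scalar subset. The one-dimensional sub-case (a single complex line with bad radial part) was already subsumed in the finite-dimensional reduction, via \cite{charpentier_-supercyclicity_2016} transported to $\ell^2(\N)$ or $\ell^2(\Z)$ by conjugation with an isomorphism and taking $(x_n)_n$ to be the canonical basis.

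The main obstacle is the infinite-dimensional analogue of Lemma~\ref{lemred}, i.e. producing the distinguished direction $f_{i_0}$ along which a suitable linearly independent sequence of $\Gamma$ has its coordinate tending to $0$ or $\infty$. In finite dimensions this is immediate from the relative compactness of a bounded set; in $\ell^2(\N)$ a bounded set need not be relatively compact, and one has to argue that if, for \emph{every} unit vector $f$, the scalar set $\{\langle\gamma,f\rangle:\gamma\in\Gamma\}$ were bounded and bounded away from $0$ (apart from the value $0$), then $\Gamma$ would be contained in a finite union of vector annuli — so that the failure of this alternative, which is exactly our standing hypothesis, hands us the required direction. Packaging this together with the genericity choice of $f$ ensuring all the coordinates $\lambda_k^{(m)}$ used in the construction are nonzero, and with keeping the $\tilde z_m$ in distinct summands (hence orthogonal), is the technical heart of the argument; this is where the "more technical proof" announced after Lemma~\ref{lemce}, treating all cases in detail, is carried out.
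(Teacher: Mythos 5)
Your overall architecture coincides with the paper's: reduce to the case where $\overline{\Span}(\Gamma)$ is infinite dimensional, extract a linearly independent sequence, find an adapted orthonormal basis of $\overline{\Span}(\Gamma)$ with a distinguished coordinate tending to $0$ or $\infty$ along a subsequence, and run the weighted-shift construction on an $\ell^2$-direct sum of copies of $\ell^2(\Z)$. However, the step you yourself single out as ``the technical heart'' --- the infinite-dimensional analogue of Lemma \ref{lemred} --- is exactly where the proposal stops being a proof. You assert that if, for every unit vector $f$, the set $\{\langle\gamma,f\rangle:\gamma\in\Gamma\}\setminus\{0\}$ were bounded and bounded away from $0$, then $\Gamma$ would lie in a finite union of vector annuli, and you take the negation of this as handing you the required direction. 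That assertion is essentially equivalent to what has to be proved and is left unproved; in the paper it is the content of Lemma \ref{lem2-inf-case}, whose proof first chooses (by a Baire category argument) a two-dimensional subspace $\Span(f_0,f_1)$ onto which the projections of the $\lambda_n$ remain pairwise independent, and then uses compactness of bounded sets \emph{in that two-dimensional subspace} to produce a limit $w$ and a vector $a\perp w$ with $\langle\lambda_{n_k},a\rangle\to 0$ (or handles the unbounded case). Nothing in your sketch replaces this transfer of finite-dimensional compactness. Likewise, your ``genericity choice of $f$, over which the exceptional bases form a meagre set'' does not deliver what Lemma \ref{lemce-inf-case} needs: one must produce an \emph{orthonormal basis} of $\overline{\Span}(\lambda_n,\,n\geq0)$ --- totality is essential, since otherwise the coordinate representation of the $\lambda_n$ is wrong --- every member of which has nonzero inner product with every $\lambda_n$. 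The paper's Lemma \ref{lem1-inf-case} secures this by an inductive Baire argument in each $G_{m-1}^{\perp}$ combined with the approximation conditions $\text{dist}(x_{\phi(m)},G_m)\leq\varepsilon_m$ that force totality; a bare genericity statement addresses neither the simultaneous nonvanishing of all coordinates nor totality.

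A secondary but real defect lies in the construction itself: your $\tilde z_m$ omit the $e_0$ summand that the paper inserts in each block. Conditions (i)--(ii) only bound the blocks from above, so $\Vert\tilde z_m\Vert$ is not bounded below; consequently the diagonal operator $D$ used to pass from the orthogonal family $(\tilde z_m)_m$ to the orthonormal family $(z_m)_m$ required by Definition \ref{def-Gamma-hyp-l2} may be unbounded, and the density of $\orb(\Gamma^f_{(z_m)_m},T)$ no longer follows from that of $\orb(\Gamma^f_{(\tilde z_m)_m},T)$. Relatedly, checking the estimates ``summand by summand exactly as before'' is not sufficient in the infinite direct sum: one needs the refined bounds $2^{-(k+i)}$ in the $i$-th block, the vanishing conditions $(F^{m_k}y_{k,i})(0)=0$, and the extra requirement $\bigl\Vert(\lambda^{(i)}_{\varphi(k)})_i\bigr\Vert_2\leq 2^{m_k-1}/k$ in order to make the blockwise errors $\ell^2$-summable over $i$ and to absorb the contribution of the $e_0$ terms. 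All of this can be arranged, but these are new ingredients relative to Lemma \ref{lemce}, not consequences of ``the very same induction''.
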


According to Proposition \ref{only-if-main}, we need only to prove the previous proposition for subsets $\Gamma$ of $\ell^2(\N)$ which are not contained in any finite dimensional subspace of $\ell^2(\N)$. Thus we are reduced to prove the following.

\begin{proposition}\label{only-if-main-l2-reduced}Let $\Gamma=(\lambda_n)_{n\geq 0}\subset \ell^2(\N)$ be a linearly independent family. There exist a separable infinite dimensional complex Hilbert space $H$ and a $\Gamma$-supercyclic operator $T$ on $H$ which is not hypercyclic.
\end{proposition}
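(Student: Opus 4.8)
The plan is to reproduce, in the infinite dimensional setting, the construction used to prove Lemma \ref{lemce}, the only new ingredient being a preliminary normalization of the linearly independent sequence $(\lambda_n)_n\subset\ell^2(\N)$. First I would fix the ambient space to be $H:=\left(\ell^2(\Z)\right)^{(\N)}$, an $\ell^2$-direct sum of countably many copies of $\ell^2(\Z)$, and I would take $T$ to be a direct sum of weighted bilateral shifts, exactly one of whose summands is expansive (say $B_w$ with $w_i=2$ for $i>0$ and $w_i=1$ for $i\le 0$) and all of whose other summands are the "hypercyclic-looking" shift $B_v$ with $v_i=2$ for $i>0$, $v_i=1/2$ for $i\le 0$. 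Such a $T$ is not hypercyclic because its restriction to the expansive summand is not hypercyclic (an expansive operator cannot have a dense orbit). The heart of the matter is then to produce an orthonormal family $(z_n)_n$ in $H$ (equivalently, after a global isomorphism, an orthogonal family $(\tilde z_n)_n$) such that $\orb(\Gamma^{f}_{(z_n)_n},T)$ is dense for a suitable orthonormal basis $f$ of $\ell^2(\N)$; by Proposition \ref{Prop_depend_baseH}, this exhibits a $\Gamma$-supercyclic non-hypercyclic operator and finishes the proof.

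The reduction step is where infinite dimensionality forces extra care. I would first observe, as in the proof of Lemma \ref{lemce}, that only the "coordinates along $\Span(\lambda_n,\,n\ge 0)$" matter, so there is no loss of generality in assuming $(\lambda_n)_n$ already spans a dense subspace. The key point to extract is a single "privileged direction": since $(\lambda_n)_n$ is a linearly independent sequence in a Hilbert space, one can find (after passing to a subsequence and choosing an orthonormal basis $f=(f_i)_{i\ge 0}$ adapted to the situation) a unit vector $f_0$ such that, writing $\lambda_n=\sum_i\lambda_n^{(i)}f_i$, the coordinate $\lambda_n^{(0)}$ tends to $0$ (if $(\lambda_n)_n$ is bounded) or $\|\lambda_n\|\to\infty$ with $\lambda_n/\|\lambda_n\|\to f_0$ (if $(\lambda_n)_n$ is unbounded), and moreover $\lambda_n^{(i)}\ne 0$ for all $i$ in the "support" used at stage $n$. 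Here one must be mindful that each $\lambda_n$ has infinitely many nonzero coordinates in general; the trick is to let the expansive summand $B_w$ carry the privileged coordinate (indexed by $0$), while the remaining coordinates are handled by copies of $B_v$, and to arrange the density targets $y_k=(y_k^{(i)})_i$ to live in $c_{00}$ with only finitely many nonzero blocks, so that at each inductive stage only finitely many of the conditions (i)–(vi)-type estimates are active.

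The inductive construction itself is then a direct transcription of the one in Lemma \ref{lemce}: one builds simultaneously a subsequence $(\lambda_{\varphi(k)})_k$, an increasing sequence of "time shifts" $(m_k)_k$, and the vectors $\tilde z_i:=\sum_{j\ge 0}\tfrac{1}{\lambda_{\varphi(j)}^{(i)}}F^{m_j}_{1/v}\,y_j^{(i)}$ (with $F_{1/w}$ in the privileged block), choosing at each step $\varphi(k)$ large enough (using $\lambda^{(0)}_n\to 0$ or $\to\infty$) to control the "past" terms run forward through the expansive block, and $m_k$ large enough (using $F^m y\to 0$ for $y\in c_{00}$) to control everything else; summing the resulting $2^{-k}$-type estimates gives $\|T^{m_k}(\sum_i\lambda_{\varphi(k)}^{(i)}\tilde z_i)-y_k\|\to 0$, hence density of the orbit of $\Gamma^f_{(\tilde z_i)_i}$. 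The main obstacle I anticipate is purely bookkeeping: ensuring the series defining $\tilde z_i$ converges in $\ell^2(\Z)$ simultaneously for all (infinitely many) blocks $i$ and that the family $(\tilde z_i)_i$ is orthogonal and can be rescaled to an orthonormal family without destroying convergence — this is handled by interleaving the block index into the $2^{-k}$ budget (e.g. demanding the stage-$k$ error in block $i$ be $\le 2^{-k-i}$) so that all tails are summable, exactly the device already flagged in the remark following the proof of Lemma \ref{lemce}. Everything else, including the verification that the constructed orthogonal family spans the appropriate subspace and that the global normalizing isomorphism $S$ conjugates $T$ to a $\Gamma^f_{(z_i)_i}$-hypercyclic operator, is routine and parallels the finite dimensional case verbatim.
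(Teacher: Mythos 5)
Your overall strategy is exactly the paper's: the same ambient space $H=\left(\oplus_{i\ge 0}\ell^2(\Z)\right)_{\ell^2}$, the same operator $B_w\oplus\bigoplus_{i\ge 1}B_v$ with one non-hypercyclic summand, the same inductive choice of $\varphi(k)$ and $m_k$ with the $2^{-(k+i)}$ budget, and the same normalization by a diagonal operator commuting with $T$. The inductive construction as you describe it goes through essentially verbatim, with one small repair: the paper places $e_0+\sum_{j}\frac{1}{\lambda^{(i)}_{\varphi(j)}}F^{m_j}_{1/v}y_{j,i}$ in each block, the extra $e_0$ (together with the requirement that $F^{m_k}_{1/v}y_{k,i}$ vanish at position $0$) being there to guarantee $\inf_i\Vert\tilde z_i\Vert\ge 1$, so that the normalizing diagonal operator is bounded; your $2^{-k-i}$ budget controls the norms $\Vert\tilde z_i\Vert$ from above but not from below.

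The genuine gap is in the ``preliminary normalization'', which you present as routine but which is where the new infinite-dimensional ideas live. First, in the unbounded case you propose to extract $\lambda_n/\Vert\lambda_n\Vert\to f_0$; this uses compactness of the unit sphere and fails in $\ell^2(\N)$ (take $\lambda_n=ne_n$). The paper's way around this (Lemma \ref{lem2-inf-case}) is to first find a two-dimensional subspace $\Span(f_0,f_1)$ onto which the orthogonal projections $P(\lambda_n)$ remain pairwise independent --- itself a Baire-category argument --- and then to run the compactness argument on the $P(\lambda_n)$ inside that plane, producing a unit vector $a$ with $\left<\lambda_{n_k},a\right>\to 0$ or $\infty$ and $\left<\lambda_{n_k},a\right>\neq 0$ along a subsequence. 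Second, your construction divides by $\lambda^{(i)}_{\varphi(j)}$ for \emph{every} $i\in I_1$ and every $j>i$ (not just for finitely many indices per stage, since $\tilde z_i$ involves all the $y_{j,i}$ with $j>i$), so you need an orthonormal basis of $\overline{\Span}(\lambda_n,\,n\ge 0)$ in which every coordinate of every $\lambda_n$ is nonzero. Producing such a basis while keeping it total is the content of the paper's Lemma \ref{lem1-inf-case}, another nontrivial Baire-category construction (at each step one must simultaneously avoid countably many proper closed subspaces while staying close enough to prescribed projections to guarantee totality). Neither of these two steps is supplied by, or follows routinely from, the finite-dimensional Lemma \ref{lemred}, whose compactness argument is precisely what breaks down here.
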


The proof will follow the same scheme as that of Proposition \ref{only-if-main} in the finite dimensional setting. For this, we need two lemmas. The first one generalizes Lemma \ref{lemred} to any subset of $\ell^2(\N)$.

\begin{lemma}\label{lemred-inf-case}Let $\Gamma$ be a subset of $\ell^2(\N)$ which is not contained in a finite union of vector annuli. Then, from any linearly independent family in $\Gamma$, we can extract a linearly independent sequence $(\lambda_k)_k$ such that there exist an orthonormal basis $(f_m)_m$ of $\ell^2(\N)$, a partition $\N=I_1\cup I_2$ and some $i_0\in I_1$, such that if we write $\lambda_k=\sum_{m\geq 0}\lambda_{k}^{(m)}f_m$, then:
	\begin{enumerate}
%		\item $\lambda_k=\sum_{i=1}^{n}\lambda_{k}^{(i)}f_i$\label{C1};
		\item for every $k\in\N$ and every $m\in I_1$, $\lambda_{k}^{(m)}\neq 0$;
		\item for every $k\in\N$ and every $m\in I_2$, $\lambda_{k}^{(m)}=0$;
		\item $|\lambda_{k}^{(i_0)}|\rightarrow 0$ or $\infty$ as $k\rightarrow \infty$.
%		\item $|\lambda_{k}^{(i_0)}|\rightarrow \infty$ as $k\rightarrow \infty$ if $(\lambda_k)_k$ is unbounded.
	\end{enumerate}
\end{lemma}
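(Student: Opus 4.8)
\textbf{Proof plan for Lemma \ref{lemred-inf-case}.}

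The plan is to reduce to the two genuinely different situations that can force $\Gamma$ out of every finite union of vector annuli, and then mimic the finite-dimensional argument of Lemma \ref{lemred}. First I would fix a linearly independent family in $\Gamma$ and observe that since $\Gamma$ is not contained in a finite union of vector annuli, either $\Gamma$ is not contained in a finite union of one-dimensional subspaces, or it is so contained but one of the relevant radial components is not bounded and bounded away from $0$. The latter case is exactly the one-dimensional obstruction handled in \cite{charpentier_-supercyclicity_2016} and in that case one extracts a sequence $(\lambda_k)_k$ lying in a single line $\C f_{i_0}$; taking $(f_m)_m$ to be any orthonormal basis extending $f_{i_0}/\|f_{i_0}\|$, setting $I_1=\{i_0\}$ and $I_2=\N\setminus\{i_0\}$, conditions (1)-(3) hold with $|\lambda_k^{(i_0)}|\to 0$ or $\infty$ coming from the failure of boundedness/bounded-awayness. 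So from now on I may assume $\Gamma$ is not inside a finite union of lines.

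Next, given a linearly independent family in $\Gamma$, I want to extract a linearly independent sequence $(\lambda_k)_k$ for which the ``tail'' directions can be controlled. The key step is to produce a vector $f_{i_0}$ (to be the $i_0$-th basis vector) together with a subsequence $(\lambda_k)_k$ whose behaviour along $f_{i_0}$ degenerates, i.e.\ $|\langle \lambda_k, f_{i_0}\rangle|\to 0$ or $|\langle \lambda_k, f_{i_0}\rangle|\to\infty$ after normalizing the rest appropriately. When $\Gamma$ is bounded this is done as in Lemma \ref{lemred}: since $\Gamma$ is not contained in finitely many lines, one finds a unit vector $f_1$ and a linearly independent subsequence $(\lambda_k)_k$ whose distance to $\C f_1$ tends to $0$; projecting onto $f_1^\perp$ one gets a component tending to $0$, which will be $\lambda_k^{(i_0)}$ for a suitable $i_0$. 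When $\Gamma$ is unbounded one takes $\|\lambda_k\|\to\infty$, passes to a subsequence with $\lambda_k/\|\lambda_k\|$ weakly convergent (the closed unit ball of $\ell^2$ is weakly compact) to some $f_1$, and after a further extraction the first coordinate $\lambda_k^{(1)}$ along $f_1$ blows up. In either case, having chosen $f_{i_0}$, I extend it to an orthonormal basis $(f_m)_m$ of $\ell^2(\N)$ — here is where the Hilbert structure, as opposed to a mere Banach structure, is used.

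The remaining bookkeeping is to arrange conditions (1) and (2): after the extraction above, some coordinates $\lambda_k^{(m)}$ may vanish identically in $k$. I would put into $I_2$ precisely those indices $m$ for which $\lambda_k^{(m)}=0$ for all $k$ in the extracted sequence, and into $I_1$ the rest; a diagonal extraction then ensures $\lambda_k^{(m)}\neq 0$ for every $k$ and every $m\in I_1$ (for each $m\in I_1$ infinitely many $\lambda_k^{(m)}$ are nonzero, so a standard diagonal argument over the countably many indices in $I_1$ selects a subsequence on which none vanish). One must check $i_0\in I_1$, which holds by construction since $|\lambda_k^{(i_0)}|$ is either a nonzero sequence tending to a nonzero limit divided by a blow-up (unbounded case) or, in the bounded case, is nonzero after extraction because the $\lambda_k$ were chosen off the line $\C f_1$.

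The main obstacle I anticipate is purely in the unbounded case: passing to a weakly convergent subsequence of $\lambda_k/\|\lambda_k\|$ only gives a weak limit $f_1$, which \emph{a priori} could be zero (loss of mass to infinity along the basis), so one has to argue that this does not happen — or, more robustly, avoid it altogether by instead extracting a \emph{single} index $i_0$ along which $|\lambda_k^{(i_0)}|\to\infty$ directly, using that $\|\lambda_k\|^2=\sum_m |\lambda_k^{(m)}|^2\to\infty$ forces, for a suitable fixed reordering and diagonal extraction, at least one coordinate to be unbounded. Reconciling this with the linear-independence requirement and with condition (1) is the delicate point; once it is settled, everything else is the routine diagonal extraction described above, exactly parallel to the proof of Lemma \ref{lemred}.
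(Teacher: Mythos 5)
There are genuine gaps at precisely the two points where the finite-dimensional argument of Lemma \ref{lemred} does not transfer to $\ell^2(\N)$, and these are the points the paper's proof is built to circumvent. In your bounded case you claim one can find a unit vector $f_1$ and a subsequence whose distance to $\C f_1$ tends to $0$; this rests on compactness of the unit sphere and fails in infinite dimensions. For $\Gamma=\{e_k\}_k$ an orthonormal basis (bounded, linearly independent, not contained in a finite union of vector annuli) one has $\mathrm{dist}(e_k,\C f_1)=\sqrt{1-|\left<e_k,f_1\right>|^2}\to 1$ for every fixed unit vector $f_1$, so no such line exists. Your proposed fix for the unbounded case is also insufficient: $\Vert\lambda_k\Vert\to\infty$ does not force any single coordinate to be unbounded (take $\lambda_k=e_1+\cdots+e_k$). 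The conclusion you are after is still true, but the missing idea is the one in Lemma \ref{lem2-inf-case}: restore compactness by projecting onto a fixed \emph{two-dimensional} subspace $\Span(f_0,f_1)$, chosen by a Baire category argument so that the projections $P(\lambda_k)$ remain pairwise independent and the relevant inner products are nonzero. If $\Vert P(\lambda_k)\Vert$ is unbounded, one of the two coordinates blows up along a subsequence; otherwise $P(\lambda_{k_j})\to w$ inside the plane and one takes $a$ in the plane orthogonal to $w$, giving $\left<\lambda_{k_j},a\right>\to 0$. In particular, in the unbounded case one may legitimately land in the ``$\to 0$'' alternative; there is no need to manufacture a coordinate tending to $\infty$.

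Your scheme for condition (1) --- fix a basis, put into $I_2$ the identically vanishing coordinates, and diagonally extract so that the remaining coordinates never vanish --- also fails for an arbitrary basis. Take $\lambda_k=e_0+e_k$: for any infinite subsequence $(\lambda_{k_j})_j$ the coordinate $m=k_1$ is nonzero for $j=1$ and zero for all $j\geq 2$, so it can be placed in neither $I_1$ nor $I_2$; moreover the set of identically vanishing coordinates changes after each extraction, so the diagonal process is not even well posed. The basis must be built adaptively: the paper's Lemma \ref{lem1-inf-case} constructs, again by Baire category, an orthonormal basis $(f_m)_m$ of $\overline{\Span}(f_0,\lambda_k,\,k\geq 0)$ with $\left<\lambda_k,f_m\right>\neq 0$ for \emph{all} $k,m$ simultaneously, so no further extraction is needed for (1), and $\{f_m:m\in I_1\}$ automatically spans the right subspace, which is what Lemma \ref{lemce-inf-case} later requires. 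Finally, your preliminary case where $\Gamma$ lies in a finite union of lines is vacuous here (the lemma is only invoked for sets containing an infinite linearly independent family), and your treatment of it returns a sequence inside a single line, which cannot be linearly independent as the statement demands.
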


The second lemma extends Lemma \ref{lemce} to subsets of $\ell ^2(\N)$. We will use the notation $\Gamma_{(z_i)_i}^f$ introduced in Proposition \ref{Prop_depend_baseH}.

\begin{lemma}\label{lemce-inf-case}Let $\Gamma=(\lambda _n)_n\subset \ell ^2(\N)$ and let $f:=\{f_m,m\in I_1\}\cup \{f_m,m\in I_2\}$ be an orthonormal basis of $\ell ^2(\N)$ such that $\{f_m,m\in I_1\}$ is an orthonormal basis of $\overline{\Span(\lambda_n,\,n\geq 0)}$. We denote by $\lambda _n^{(i)}$ the $i$-th coordinate of the $n$-th sequence $\lambda_n$, with respect to $f$. We assume that $\lambda _n^{(i)}\neq 0$ for any $n\geq 0$ and any $i\in I_1$, and that
$$\lim_{n\to\infty}|\lambda_{n}^{(i_0)}|=0\text{ or }\infty$$
for some $i_0\in I_1$. Then there exist a separable Hilbert space $H$, a non-hypercyclic operator $T$ on $H$ and an orthonormal family $(z_i)_i$ in $H$ such that $T$ is $\Gamma^{f}_{(z_i)_i}$-hypercyclic, where
$$\Gamma^f_{(z_i)_i}:=\left\{\sum_{i\geq 0}\lambda _n^{(i)} z_i,\,n\geq 0\right\}.$$
\end{lemma}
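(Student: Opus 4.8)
\textbf{Proof strategy for Lemma \ref{lemce-inf-case}.}

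The plan is to mimic the finite-dimensional construction of Lemma \ref{lemce} but with a countable supply of weighted shift summands, so the first reduction is exactly as there: since $\Gamma^f_{(z_i)_i}$ only involves the coordinates indexed by $I_1$, we may assume $I_2=\emptyset$ and $I_1=\N$, and after reindexing we may assume the distinguished index is $i_0=0$. We treat first the case $\lambda_n^{(0)}\to 0$. The ambient space will be $H=\bigoplus_{i\geq 0}\ell^2(\Z)$ (an $\ell^2$-direct sum), and on it we put $T=B_w\oplus B_v\oplus B_v\oplus\cdots$, where $B_v$ is the bilateral weighted backward shift with weights $v_i=2$ for $i>0$ and $v_i=\tfrac12$ for $i\leq 0$, while $B_w$ has weights $w_i=2$ for $i>0$ and $w_i=1$ for $i\leq 0$; the point is that $B_w$ is expansive on $\ell^2(\Z)$ (orbits of nonzero vectors go to infinity under the forward direction), hence not hypercyclic, hence $T$ is not hypercyclic on $H$. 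One must check that $T$ is a bounded operator on the $\ell^2$-direct sum: since $\|B_v\|=\|B_w\|=2$ the direct sum operator is bounded with norm $2$, so this is immediate.

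Next, fix a sequence $(y_k)_{k\in\N}=\bigl((y_k^{(i)})_{i\geq 0}\bigr)_{k\in\N}$ dense in $H$ with each $y_k$ having only finitely many nonzero components, each component lying in $c_{00}(\Z)$; such a sequence exists because $\bigcup_{N}\bigl(c_{00}(\Z)\bigr)^N\times\{0\}\times\cdots$ is dense in $H$. I would then build, by induction on $k$, subsequences $\bigl(\lambda_{\varphi(k)}^{(i)}\bigr)_k$ of $\bigl(\lambda_n^{(i)}\bigr)_n$ for each $i$ and an increasing sequence $(m_k)_k\subset\N$ satisfying the obvious analogues of conditions (i)--(vi) from the proof of Lemma \ref{lemce}, but now only over the finitely many indices $i$ for which $y_j^{(i)}\neq 0$ for some $j\leq k$; the decisive one is the analogue of (vi), namely $\bigl\|\tfrac{\lambda_{\varphi(k)}^{(0)}}{\lambda_{\varphi(j)}^{(0)}}B_w^{m_k-m_j}y_j^{(0)}\bigr\|<2^{-k}$ for $j<k$, which is forced by choosing $\varphi(k)$ large so that $\lambda_{\varphi(k)}^{(0)}$ is tiny (using $\lambda_n^{(0)}\to 0$), exactly as before, since for fixed $j$ the quantity $\|B_w^{m_k-m_j}y_j^{(0)}\|$ is bounded independently of $\varphi(k)$ once $m_k$ has been fixed. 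The remaining conditions are arranged by taking $m_k$ large, using $F_{1/v}^m y\to 0$, $F_{1/w}^m y\to 0$ and $B_v^m y\to 0$ as $m\to\infty$ for $y\in c_{00}(\Z)$. One then defines $\tilde z_i\in H$ to be the vector whose $i$-th component equals $\sum_{j\geq 0}\tfrac{1}{\lambda_{\varphi(j)}^{(i)}}F_{1/w}^{m_j}y_j^{(i)}$ if $i=0$ and $\sum_{j\geq 0}\tfrac{1}{\lambda_{\varphi(j)}^{(i)}}F_{1/v}^{m_j}y_j^{(i)}$ if $i\geq 1$ (with all other components $0$), the series converging in $\ell^2(\Z)$ by conditions (i)--(ii); the $\tilde z_i$ are mutually orthogonal since their supports lie in different summands. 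The telescoping estimate of Lemma \ref{lemce}, applied componentwise and summed over the finitely many active indices, gives $\|T^{m_k}(\sum_{i}\lambda_{\varphi(k)}^{(i)}\tilde z_i)-y_k\|\to 0$, so $T$ is $\Gamma^f_{(\tilde z_i)_i}$-hypercyclic; normalizing via an isomorphism $S$ sending each $\tilde z_i$ to $z_i:=\tilde z_i/\|\tilde z_i\|$ turns $S T S^{-1}$ into a $\Gamma^f_{(z_i)_i}$-hypercyclic, non-hypercyclic operator on $H$ with $(z_i)_i$ orthonormal. The case $|\lambda_n^{(0)}|\to\infty$ is handled symmetrically, replacing $B_w$ by an expansive-in-the-other-direction shift (weights $w_i=1$ for $i>0$, $w_i=\tfrac12$ for $i\leq 0$, say), so that smallness of $\lambda_{\varphi(j)}^{(0)}$ is replaced by largeness; the detailed bookkeeping of all cases is deferred to the proof of Theorem \ref{thm-main-l2} in the next paragraph.

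\textbf{The main obstacle.} The one genuinely new point compared with Lemma \ref{lemce} is that there are now infinitely many summands and infinitely many coordinates to control simultaneously, and one must be sure nothing diverges: convergence of each series defining the $i$-th component of $\tilde z_i$, summability of the errors, and most subtly the fact that at stage $k$ of the induction only finitely many of the conditions (i)--(vi) involve a nonzero $y_k^{(i)}$, so only finitely many constraints are active at each step — this is precisely why the $y_k$ must be chosen with finite support in the direct-sum decomposition. Once that finiteness is in place, each inductive step is just the finite-dimensional argument of Lemma \ref{lemce} applied to the finitely many relevant coordinates, and the global estimate $\|T^{m_k}(\cdot)-y_k\|\leq (\#\{\text{active }i\})\cdot\tfrac{k+1}{2^k}\to 0$ closes the proof; care is only needed to note that the number of active indices, while growing with $k$, grows slowly enough (it is at most $k$ times the maximal number of nonzero components among $y_1,\dots,y_k$) that, after possibly sharpening $2^{-k}$ to $2^{-k}/(\text{that count})$ in conditions (i)--(vi), the bound still tends to $0$.
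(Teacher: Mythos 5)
Your overall architecture (the reduction to $I_1=\N$, $i_0=0$, the space $H=\bigl(\bigoplus_{i\geq 0}\ell^2(\Z)\bigr)_{\ell^2}$ with $T=B_w\oplus\bigoplus_{i\geq 1}B_v$, the finitely supported dense sequence, and the inductive choice of $\varphi$ and $(m_k)$) is exactly the paper's, but the final normalization step has a genuine gap. You define the $i$-th component of $\tilde z_i$ as $\sum_{j\geq 0}\frac{1}{\lambda_{\varphi(j)}^{(i)}}F^{m_j}y_j^{(i)}$ and then pass to $z_i=\tilde z_i/\Vert\tilde z_i\Vert$ ``via an isomorphism $S$''. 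But your conditions force $\bigl\Vert\frac{1}{\lambda_{\varphi(j)}^{(i)}}F^{m_j}y_j^{(i)}\bigr\Vert<2^{-j}$, and for each fixed $i$ the first index $j$ with $y_j^{(i)}\neq 0$ tends to infinity with $i$ (since each $y_j$ has finitely many nonzero components, only finitely many $i$ can be ``activated'' by $y_0,\dots,y_N$). Hence $\Vert\tilde z_i\Vert\leq 2^{1-j_i}\to 0$, the diagonal map sending $\tilde z_i$ to $\tilde z_i/\Vert\tilde z_i\Vert$ has unbounded entries $\Vert\tilde z_i\Vert^{-1}$, and $S$ is not a bounded operator; the conjugacy (or the commuting dense-range operator trick) collapses. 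This is precisely why the paper inserts an extra unit vector $e_0$ into the $i$-th component of each $\tilde z_i$, together with the additional requirements $(F_{1/v}^{m_k}y_{k,i})(0)=0$, $(F_{1/w}^{m_k}y_{k,0})(0)=0$ (so the $e_0$ survives and forces $1\leq\Vert\tilde z_i\Vert\leq 3$) and $\Vert(\lambda_{\varphi(k)}^{(i)})_i\Vert_2\leq 2^{m_k-1}/k$ (so the new error terms $\lambda_{\varphi(k)}^{(i)}B^{m_k}e_0$ are summably small). Without some such modification your construction does not produce an orthonormal family realizing $\Gamma$.

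A secondary, repairable point: your bound $\Vert T^{m_k}(\cdot)-y_k\Vert\leq(\#\{\text{active }i\})\cdot\frac{k+1}{2^k}$ and the proposed fix of dividing $2^{-k}$ by the number of active indices do not quite address the right sum. The total error is an $\ell^2$-sum over \emph{all} $i\geq 1$ of componentwise errors, and every $i$ eventually contributes through the tail terms $j>k$; the clean way to make this $\ell^2$-sum small, as in the paper, is to sharpen the bounds in your conditions (iii) and (v) to $2^{-(k+i)}$ so that the sum over $i$ of the squares is controlled uniformly in $k$. With these two corrections your argument matches the paper's proof.
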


The proofs of those lemmas are postponed in Paragraphs \ref{subsubproof1} and \ref{subsubproof2} respectively. Let us first see how we use them in order to complete the proof of Proposition \ref{only-if-main-l2-reduced}.

\begin{proof}[Proof of Proposition \ref{only-if-main-l2-reduced}]Let $\Gamma=(\lambda_n)_n$ be as in Proposition \ref{only-if-main-l2-reduced}. By Lemma \ref{lemred-inf-case}, there exist an orthonormal basis $f:=(f_m)_m$ of $\ell^2(\N)$, a partition $\N=I_1\cup I_2$ and some $i_0\in I_1$, such that items (1) and (2) hold, and (3) or (4) as well. In particular, the assumptions of Lemma \ref{lemce-inf-case} are satisfied. It follows that there exist a separable Hilbert space $H$, a non-hypercyclic $T\in \mathcal{L}(H)$ and an orthonormal family $(z_i)_i$ in $H$ such that $T$ is $\Gamma^{f}_{(z_i)_i}$-hypercyclic. By Proposition \ref{Prop_depend_baseH} we conclude that $\Gamma$ is not a hypercyclic scalar subset.
\end{proof}

Now, we have to prove Lemmas \ref{lemred-inf-case} and \ref{lemce-inf-case}.

\subsubsection{Proof of Lemma \ref{lemred-inf-case}}\label{subsubproof1}It is based on two geometric sublemmas.

\begin{lemma}\label{lem1-inf-case}Let $(x_n)_{n\geq 0}\subset \ell^2(\N)$ be a linearly independent family. Given any norm $1$ vector $f_0\in \ell^2(\N)$ with $x_n\notin \C f_0$ and $\left<x_n,f_0\right>\neq 0$ for any $n\geq 0$, there exists $f_m$, $m\geq 1$, in $F:=\overline{\Span(f_0;x_n,\,n\geq 0)}$  such that $(f_m)_{m\geq 0}$ is an orthonormal basis of $F$ and
$$\left<x_n,f_m\right>\neq 0$$
for any $n,m\geq 0$.
\end{lemma}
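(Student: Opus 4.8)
The goal is, given a linearly independent family $(x_n)_{n\geq 0}$ in $\ell^2(\N)$ and a unit vector $f_0$ with $x_n\notin\C f_0$ and $\langle x_n,f_0\rangle\neq 0$ for all $n$, to complete $f_0$ to an orthonormal basis $(f_m)_{m\geq 0}$ of $F:=\overline{\Span(f_0;x_n,\,n\geq 0)}$ so that \emph{every} inner product $\langle x_n,f_m\rangle$ is nonzero. The natural approach is a Gram--Schmidt-type induction in which, at each step, we choose the new basis vector not just to be orthonormal to the previous ones, but to avoid countably many ``bad'' hyperplanes, namely those making some $\langle x_n,\cdot\rangle$ vanish. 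A Baire category (or a direct perturbation) argument inside the relevant finite- or infinite-dimensional sphere guarantees that such a choice is always possible.

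\textbf{Key steps.} First I would set up the induction. Enumerate a sequence $(g_k)_{k\geq 1}$ obtained by running Gram--Schmidt on $f_0,x_0,x_1,\ldots$ (discarding redundancies), so that $(f_0,g_1,g_2,\ldots)$ is an orthonormal basis of $F$; this is the ``default'' basis which we will perturb. Suppose $f_0,\ldots,f_{m-1}$ have been constructed, orthonormal, with $\Span(f_0,\ldots,f_{m-1})=\Span(f_0,g_1,\ldots,g_{m-1})$ and with $\langle x_n,f_j\rangle\neq 0$ for all $n\geq 0$ and all $j\leq m-1$. We must produce $f_m$ of norm $1$, orthogonal to $f_0,\ldots,f_{m-1}$, lying in $\Span(f_0,g_1,\ldots,g_m)$ (so the spans still match and we end with a basis of $F$), and with $\langle x_n,f_m\rangle\neq 0$ for every $n$. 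Write $f_m=\cos t\, g_m+\sin t\, e^{i\alpha} g'$ where $g'$ is a fixed unit vector in $\Span(f_0,g_1,\ldots,g_{m-1})\ominus\Span(f_0,\ldots,f_{m-1})$ — wait, that intersection may be trivial, so more robustly parametrize $f_m$ within the $2$-dimensional (or higher) complement: the set of admissible unit vectors $f_m$ is a sphere of some dimension $\geq 1$. For each fixed $n$, the condition $\langle x_n,f_m\rangle=0$ cuts out a proper closed subset (a great subsphere) of this sphere, hence is nowhere dense; since there are only countably many $n$, the Baire category theorem gives an admissible $f_m$ avoiding all of them. One must check the complement sphere is genuinely nontrivial: this follows because $g_m\notin\Span(f_0,\ldots,f_{m-1})$ together with the possibility of rotating into the one extra dimension — and if the ambient complement were truly $1$-dimensional we simply multiply $g_m$ by a unimodular scalar, but then $\langle x_n,f_m\rangle$ has the same modulus as $\langle x_n,g_m\rangle$, which could be zero; this is precisely why we need at least one genuine extra degree of freedom, which we obtain by \emph{not} insisting the partial spans match at every finite stage but only that the total span be $F$ — i.e. allow $f_m\in F$ arbitrary subject to orthonormality. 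With that relaxation the admissible set is an infinite-dimensional sphere and the category argument runs cleanly; at the end, $(f_m)_m$ is an orthonormal sequence in $F$, and a short argument (each $g_k$, hence each $x_n$ and $f_0$, is a finite combination of the $f_m$ up to a controlled tail, or alternatively that the constructed $f_m$ can be taken so that $f_0,g_1,\ldots,g_m\in\Span(f_0,\ldots,f_m)$ by keeping the leading coefficient on $g_m$ nonzero) shows it is a \emph{basis} of $F$.

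\textbf{Main obstacle.} The delicate point is reconciling two demands: we want $(f_m)_m$ to be an orthonormal \emph{basis} of $F$ (not just an orthonormal sequence), which pushes toward a rigid Gram--Schmidt where $f_m$ essentially equals $g_m$ up to lower-order terms, yet we also need enough freedom in choosing $f_m$ to dodge the countably many linear conditions $\langle x_n,f_m\rangle=0$. The resolution is to keep, at step $m$, the coefficient of $g_m$ in $f_m$ bounded away from $0$ (say $\geq 1/2$): then $\Span(f_0,\ldots,f_m)\supseteq\Span(f_0,g_1,\ldots,g_m)$ is preserved by induction, so the limit is all of $F$, while the orthogonal complement of $\Span(f_0,\ldots,f_{m-1})$ inside $F$ is infinite-dimensional, providing an infinite-dimensional sphere of candidates for the ``perturbation part'' of $f_m$ — ample room for Baire category against countably many nowhere-dense obstructions. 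I would also remark that each obstruction set $\{f_m:\langle x_n,f_m\rangle=0\}$ is closed because $x_n\in\ell^2$, and is proper because if $x_n$ were orthogonal to the entire complement it would lie in $\Span(f_0,\ldots,f_{m-1})$, contradicting either linear independence of the $x_n$'s or the hypothesis $\langle x_n,f_0\rangle\neq 0$ for the base case — this properness check is the only place the hypotheses on $f_0$ are really used, and it should be dispatched in a sentence or two.
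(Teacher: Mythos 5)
Your overall strategy coincides with the paper's: build $(f_m)$ inductively, choosing each $f_m$ in the infinite-dimensional orthogonal complement $G_{m-1}^{\perp}$ of $G_{m-1}:=\Span(f_0,\ldots,f_{m-1})$ inside $F$, and use Baire category to dodge the countably many sets $\{y:\langle x_n,y\rangle=0\}$. But two of your steps do not hold as written. The lesser one concerns properness: the set $\{y\in G_{m-1}^{\perp}:\langle x_n,y\rangle=0\}$ is a proper (hence nowhere dense) hyperplane only if $x_n\notin G_{m-1}$, and you claim this follows from linear independence of the $x_n$'s. It does not: for $m\geq 2$ nothing in linear independence prevents some $x_n$ from lying in the $m$-dimensional space $G_{m-1}$ --- e.g.\ if $f_1$ happened to be chosen along $\C P_0(x_0)$ then $x_0\in G_1$ and at the next step the condition $\langle x_0,y\rangle\neq 0$ cuts out the empty set. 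This must be carried as an inductive hypothesis and protected at each step by additionally avoiding the countably many lines $\C P_{m-1}(x_n)$ in $G_{m-1}^{\perp}$ (the paper's condition (ii) and sets $V_n^{m-1}$); the fix lives inside your Baire framework, so this gap is repairable.

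The serious gap is totality, i.e.\ that $(f_m)$ is a basis of $F$ and not merely an orthonormal sequence. Your proposed resolution --- keep $|\langle f_m,g_m\rangle|\geq 1/2$ so that $\Span(f_0,\ldots,f_m)\supseteq\Span(f_0,g_1,\ldots,g_m)$ --- is self-defeating: both spans have dimension $m+1$, so the inclusion is an equality, which forces $f_m\in\Span(f_0,g_1,\ldots,g_m)\ominus\Span(f_0,g_1,\ldots,g_{m-1})=\C g_m$, i.e.\ $f_m$ is a unimodular multiple of $g_m$ and you have lost exactly the freedom the Baire argument requires (as you yourself note two sentences earlier). If instead you drop the span condition and retain only the bound $|\langle f_m,g_m\rangle|\geq 1/2$, totality genuinely fails: taking $y=\sum_m a_mg_m$ with $|a_m|^2=2^{-m}$ and Gram--Schmidting the projections of the $g_m$ onto $\{y\}^{\perp}$ produces a non-total orthonormal sequence with $\langle f_m,g_m\rangle\geq 1/\sqrt{2}$ for every $m$. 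The missing ingredient is the paper's approximation device: fix an enumeration $\phi:\N\to\N$ attaining every value infinitely often and a null sequence $(\varepsilon_m)$, and at step $m$ use the \emph{density} of the residual set in $G_{m-1}^{\perp}$ to choose $\tilde f_m$ with $\Vert\tilde f_m-P_{m-1}(x_{\phi(m)})\Vert\leq\varepsilon_m$; this gives $\mathrm{dist}(x_{\phi(m)},G_m)\leq\varepsilon_m$, and revisiting each index infinitely often forces every $x_n$, hence all of $F$, into $\overline{\Span}(f_m,\,m\geq 0)$. Your ``controlled tail'' alternative points in this direction but never specifies the control or how it is achieved, so as it stands the proof establishes only an orthonormal sequence with the required non-vanishing inner products, not an orthonormal basis.
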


\begin{proof}[Proof of Lemma \ref{lem1-inf-case}]We consider an enumeration $\phi:\N\rightarrow \N$ such that for any integer $k\geq 0$ there exist infinitely many $m\geq 0$ such that $\phi(m)=k$. Let $(\varepsilon_m)_m$ be a sequence of real numbers tending to $0$ with $\varepsilon_1=\text{dist}(x_{\phi(1)},\C f_0)$. $f_0$ being given, we proceed by induction and assume that $f_1,\ldots,f_{m-1}$ have been built in such a way that, if we let $G_{i}:=\Span(f_j,\,0\leq j \leq i)$ for any $0\leq i \leq m-1$, then
\begin{enumerate}[(i)]
	\item $\Vert f_i\Vert=1$ and $\left<f_i,f_j\right>= 0$, $1\leq i\neq j\leq m-1$;\label{condi}
	\item For any $n\geq 0$, $x_n\notin G_{m-1}$;\label{condii}
	\item For any $1\leq i\leq m-1$, $\text{dist}(x_{\phi(i)},G_i) \leq \varepsilon _i$ (where $\text{dist}(x,G)$ stands for the distance from $x$ to $G$).\label{condiii}
\end{enumerate}
Let us now write $F=G_{m-1}\oplus G_{m-1}^{\perp}$ and build $f_m$ in the Hilbert space $G_{m-1}^{\perp}$. We denote by $P_{m-1}$ the orthogonal projection on $G_{m-1}^{\perp}$ and by $V_n^{m-1}:=\C P_{m-1}(x_n)$, $n\geq 0$. Note that $G_{m-1}^{\perp}$ is an infinite dimensional subspace of $F$, since $F$ is infinite dimensional while $G_{m-1}$ is finite dimensional. For any $k\geq 0$, we define
$$A_{m-1}^k:=\left\{y\in G_{m-1}^{\perp}\setminus \cup_{n\geq 0}V_n^{m-1},\,\left<y,x_k\right>\neq 0\right\}.$$
We claim that each set $A_{m-1}^k$ is a $G_{\delta}$-dense subset of the Hilbert space $G_{m-1}^{\perp}$. Indeed observe that
$$A_{m-1}^k=\bigcap_{n\geq 0}(V_n^{m-1})^c\cap \left\{y\in G_{m-1}^{\perp},\,\left<y,x_k\right>\neq 0\right\}.$$
Now, because of Property (\ref{condii}) above, any $V_n^{m-1}$, $n\geq 0$, is a proper and closed subspace of $G_{m-1}^{\perp}$. So its complement $(V_n^{m-1})^c$ is open and dense in $G_{m-1}^{\perp}$, hence by Baire Category Theorem $\cap_{n\geq 0}(V_n^{m-1})^c$ is residual in $G_{m-1}^{\perp}$. Also the set $\left\{y\in G_{m-1}^{\perp},\,\left<y,x_k\right>\neq 0\right\}$, which is the complement of the subspace orthogonal to $\C x_k$ in $G_{m-1}^{\perp}$, is open and dense in $G_{m-1}^{\perp}$. Thus, again by Baire Category Theorem, $A_{m-1}^k$ is residual in $G_{m-1}^{\perp}$. Applying another (and last) time Baire Category Theorem we get that the set
$$\bigcap _{k\geq 0}A_{m-1}^k$$
is residual in $G_{m-1}^{\perp}$, so that we can pick $\tilde{f_m}$ in $\cap _{k\geq 0}A_{m-1}^k$ such that $\Vert \tilde{f_m}-P_{m-1}(x_{\phi(m)})\Vert \leq \varepsilon _m$ and set $f_m:=\tilde{f_m}/\Vert\tilde{f_m}\Vert$.

By construction, we immediately get that $f_m$ satisfies (\ref{condi}). For (\ref{condii}) (which is the needed assumption to do the induction), let assume by contradiction that $x_n\in G_m:=\Span(f_j,\,0\leq j \leq m)$ and decompose $x_n$ in the unique way $x_n=x_n^1+P_{m-1}(x_n)$ where $x_n^1\in G_{m-1}$. Then by uniqueness of this decomposition $f_m$ must belong to $\C P_{m-1}(x_n)=V_n^{m-1}$, but this contradicts the fact that $f_m$ is in $A_{m-1}^k$. Finally, (\ref{condiii}) comes from
\begin{align}\label{eq-lem1-infinite}
\text{dist}(x_{\phi(m)},G_m) & \leq \text{dist}(P_{m-1}(x_{\phi(m)}),G_m)\nonumber\\
&\leq \text{dist}(P_{m-1}(x_{\phi(m)}),\C f_m)\\
& \leq \left\Vert \tilde{f_m} - P_{m-1}(x_{\phi(m)})\right\Vert\leq \varepsilon _m.\nonumber
\end{align}

To conclude that the family $\{f_m,\,m\geq 0\}$ satisfies the conclusion of the lemma, we only need to check that it is total in $F$, or that
$$x_n\in \overline{\Span(f_m,\,m\geq 0)}.$$
But this is straightforward from Property (\ref{condiii}) (which is now satisfied by $G_m$ for any $m\geq 1$). Indeed, let $(m_k)_k$ be an increasing sequence of integers such that $\phi(m_k)=n$ for every $k\geq 0$. As in \eqref{eq-lem1-infinite} we have
\[\text{dist}(x_n,\Span(f_m,\,m\geq 0)) \leq \text{dist}(x_{\phi(m_k)},G_{m_k})\leq \varepsilon _{m_k} \underset{k\to\infty}{\longrightarrow} 0.\]
The proof of the lemma is complete.
\end{proof}

The second lemma is as follows.

\begin{lemma}\label{lem2-inf-case}Let $(x_n)_n$ be a sequence of independent vectors in a separable Hilbert space $H$. There exist a subsequence $(x_{n_k})_k$ of $(x_n)_n$ and a vector $a\in H$ such that for any $k\geq 0$, $x_{n_k} \notin \C a$ and $\left<x_{n_k},a\right>\neq 0$, and  such that
\[
\left<x_{n_k},a\right> \rightarrow 0\text{ or }\infty,\quad \text{as }k\rightarrow \infty.
\]
\end{lemma}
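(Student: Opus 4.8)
\textbf{Proof proposal for Lemma \ref{lem2-inf-case}.}

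The plan is to separate two cases according to whether the sequence of inner products $\left<x_n,a\right>$ can be forced to have a subsequence tending to $0$ for a \emph{fixed} well-chosen $a$, or whether every reasonable choice of $a$ pairs with the $x_n$ in a bounded and bounded-away-from-zero fashion, in which case a rescaling trick produces a divergent subsequence. First I would reduce to the case where $(x_n)_n$ is bounded: if $(x_n)_n$ is unbounded, pass to a subsequence with $\Vert x_n\Vert\to\infty$, and then choose $a$ in the span (indeed one can pick $a$ to be a suitable unit vector so that $\left<x_{n_k},a\right>\to\infty$ — for instance, after normalising $y_k:=x_{n_k}/\Vert x_{n_k}\Vert$, extract a weakly convergent subsequence $y_{n_k}\rightharpoonup y$; if $y\neq 0$ one takes $a=y$ and gets $\left<x_{n_k},a\right>=\Vert x_{n_k}\Vert\left<y_{n_k},a\right>$, whose modulus goes to infinity once $\left<y_{n_k},y\right>$ is bounded below, while the possibility $\left<x_{n_k},a\right>=0$ or $x_{n_k}\in\C a$ is excluded by a further generic perturbation of $a$ as in Lemma \ref{lem1-inf-case}; if instead $y=0$, i.e. $y_{n_k}\rightharpoonup 0$, then for \emph{any} fixed nonzero $a$ one has $\left<y_{n_k},a\right>\to 0$, and one must instead look at $\left<x_{n_k},a\right>$, which need not converge, so one extracts once more to make it converge in $[0,\infty]$ and rules out the finite nonzero limit by the weak convergence of $y_{n_k}$ to $0$ — this gives either the $0$ or the $\infty$ alternative).

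So assume now $(x_n)_n$ is bounded. The key observation is that a bounded linearly independent sequence in a separable Hilbert space cannot be "uniformly non-degenerate" against every direction: if it were true that there existed $\delta>0$ with $|\left<x_n,a\right>|\geq\delta\Vert a\Vert$ for some fixed $a$ and all $n$, and simultaneously $\sup_n|\left<x_n,a\right>|<\infty$, we would still need to produce a convergent subsequence of $\left<x_n,a\right>$; the point is rather to choose $a$ so that $\left<x_n,a\right>\to 0$. To do this I would exploit that the bounded sequence $(x_n)$ has a weakly convergent subsequence $x_{n_k}\rightharpoonup x_\infty$. If $x_\infty=0$, then $\left<x_{n_k},a\right>\to 0$ for every fixed $a$, and choosing $a$ generic (again via the Baire-category argument of Lemma \ref{lem1-inf-case}, applied in the closed span $\overline{\Span}(x_{n_k},k\ge 0)$ which is infinite dimensional because the $x_{n_k}$ are independent) so that $\left<x_{n_k},a\right>\neq 0$ and $x_{n_k}\notin\C a$ for all $k$, we are done with the "$\to 0$" alternative. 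If $x_\infty\neq 0$, I would instead work in the orthogonal complement: since the $x_{n_k}$ are independent, infinitely many of them lie outside $\overline{\Span}(x_\infty)$; more usefully, consider $u_k:=x_{n_k}-x_\infty\rightharpoonup 0$. Passing to a subsequence, the $u_k$ are still eventually independent (independence fails only for finitely many, or one discards the short-lived linear relations); now $\left<u_k,a\right>\to 0$ for every fixed $a$. Pick $a$ in $\overline{\Span}(x_\infty;u_k,k\ge0)$ generic as in Lemma \ref{lem1-inf-case} with the additional normalisation $\left<x_\infty,a\right>=0$ (possible since we only need $a$ to avoid countably many proper closed subspaces and one hyperplane, still a residual condition); then $\left<x_{n_k},a\right>=\left<u_k,a\right>+\left<x_\infty,a\right>=\left<u_k,a\right>\to 0$, and genericity of $a$ ensures $\left<x_{n_k},a\right>\neq 0$ and $x_{n_k}\notin\C a$ for all $k$.

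The main obstacle I anticipate is not the convergence itself — weak sequential compactness of bounded sets in Hilbert space hands that to us almost for free — but rather the \emph{simultaneous} requirement that the limiting $a$ satisfy the three "non-degeneracy" conditions (norm one, $x_{n_k}\notin\C a$, and $\left<x_{n_k},a\right>\neq 0$) for \emph{all} $k$, while also having the prescribed asymptotic behaviour. The resolution is exactly the Baire-category machinery already isolated in Lemma \ref{lem1-inf-case}: the set of admissible directions $a$ (avoiding the countably many proper closed subspaces $\C x_{n_k}$ and the countably many hyperplanes $x_{n_k}^{\perp}$, and, when needed, lying in $x_\infty^{\perp}$) is a dense $G_\delta$ in the appropriate infinite-dimensional closed subspace, hence nonempty; and since all these conditions are stable under the choice of $a$ within that residual set, one can pick $a$ there and then pass to the subsequence extracted from the weak-convergence argument. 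One should take a little care with the order of the two "extract a subsequence" steps (first extract for weak convergence and for the convergence of $\left<x_{n_k},a\right>$ in $[0,\infty]$, \emph{then} fix $a$; but $a$ depends on the weakly convergent subsequence, so in fact: extract for weak convergence, then choose $a$, then no further extraction of $(x_{n_k})$ is needed because $\left<u_k,a\right>\to 0$ automatically) — this bookkeeping is routine but worth stating cleanly. Putting the pieces together yields in every case a subsequence $(x_{n_k})$ and a vector $a$ with $x_{n_k}\notin\C a$, $\left<x_{n_k},a\right>\neq 0$, and $\left<x_{n_k},a\right>\to 0$ or $\infty$, which is the assertion of the lemma.
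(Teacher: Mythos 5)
Your strategy (weak sequential compactness plus the Baire-category selection of $a$) differs from the paper's, which instead projects onto a well-chosen two-dimensional subspace $\Span(f_0,f_1)$ and runs a dichotomy on the \emph{projected} sequence: either $\Vert P(x_n)\Vert$ is unbounded (one coordinate tends to $\infty$), or by norm compactness in the plane $P(x_{n_k})\to w$ and one takes $a$ in the plane orthogonal to $w$. Your bounded case is correct and is essentially the same idea transposed to weak limits: $a\in x_\infty^{\perp}$ generic does the job, and the residual-set bookkeeping you describe goes through.

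The gap is in the unbounded case, in the sub-case where $y_{n_k}:=x_{n_k}/\Vert x_{n_k}\Vert\rightharpoonup 0$. You claim that, for a fixed $a$, after extracting so that $\vert\left<x_{n_k},a\right>\vert$ converges in $[0,\infty]$, the finite nonzero limit is ``ruled out by the weak convergence of $y_{n_k}$ to $0$''. This is false: $\left<x_{n_k},a\right>=\Vert x_{n_k}\Vert\left<y_{n_k},a\right>$ is an indeterminate product of the form $\infty\cdot 0$ and can converge to any finite nonzero value. Concretely, take $x_n=ne_n+e_0$ in $\ell^2(\N)$: these are linearly independent, $\Vert x_n\Vert\to\infty$, $x_n/\Vert x_n\Vert\rightharpoonup 0$, and yet $\left<x_n,e_0\right>=1$ for every $n$, while $a=e_0$ satisfies all of your nondegeneracy conditions. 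The lemma is still true for this sequence (e.g.\ $a$ with coordinates $a_0=1$ and $a_n=-1/n+1/n^2$ gives $\left<x_n,a\right>=1/n\to 0$), but your argument does not produce such an $a$: the Baire machinery you invoke only controls the conditions $\left<x_{n_k},a\right>\neq 0$ and $x_{n_k}\notin\C a$, not the asymptotics. Two repairs are available: (i) the paper's route, where the dichotomy is run on $\Vert P(x_n)\Vert$ rather than on $\Vert x_n\Vert$, so that even when $\Vert x_n\Vert\to\infty$ the projections may be bounded, norm compactness in the plane yields a limit $w$, and $a\perp w$ gives the $0$ alternative; or (ii) an extra Baire argument showing that when $\Vert x_n\Vert\to\infty$ the set of $a$ with $\limsup_n\vert\left<x_n,a\right>\vert=\infty$ is residual (each set $\bigcup_n\{a:\vert\left<x_n,a\right>\vert>M\}$ is open and dense, as one sees by perturbing $a$ by a small multiple of $x_n/\Vert x_n\Vert$ for $n$ large). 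As written, though, this sub-case of your proof is incomplete.
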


\begin{proof}It is based on the following claim.
\begin{claim}Let $(x_n)_n$ be a sequence of pairwise independent vectors in $H$. There exist two orthogonal normed $1$ vectors $f_0$ and $f_1$ in $H$ such that $x_n\notin \C f_i$ and $\left<x_n,f_i\right>\neq 0$, $i=0,1$, and such that, if we denote by $P$ the orthogonal projection onto $\Span(f_0,f_1)$, the vectors $P(x_n)$ are still pairwise independent.
\end{claim}

\begin{proof}[Proof of the claim]By the Baire Category Theorem, we can choose $f_0$ in $H$, with norm $1$, such that $\left<x_n,f_0\right>\neq 0$ and $x_n\notin \C f_0$, $n\geq 0$. We have to show the existence of $f_1$ such as desired. Let us fix $n\neq m$. By assumption $x_n$ and $x_m$ are linearly independent. For $b\in f_0^{\perp}$ nonzero and $P_b$ the orthogonal projection onto $\Span(f_0,b)$, we have
\begin{equation*}P_b(x_n) = \left<x_n,f_0\right>f_0+\left<x_n,b\right>b\quad \text{and}\quad P_b(x_m) = \left<x_m,f_0\right>f_0+\left<x_m,b\right>b.
\end{equation*}
Let us denote by $A_{n,m}$ the subset of those $b$ in $f_0^{\perp}$ such that $P_b(x_n)$ and $P_b(x_m)$ are linearly independent. It corresponds to the set of $b\in f_0^{\perp}$ such that
\begin{equation*}\left<x_n,f_0\right>\left<x_m,b\right>-\left<x_m,f_0\right>\left<x_n,b\right> = \left<\left<x_n,f_0\right>x_m-\left<x_m,f_0\right>x_n,b \right>
\neq 0.
\end{equation*}
Now $A_{n,m}$ is the complement of the orthogonal to $\left<x_n,f_0\right>x_m-\left<x_m,f_0\right>x_n$ in $f_0^{\perp}$, and therefore is an open dense set of $f_0^{\perp}$ whenever $\left<x_n,f_0\right>x_m-\left<x_m,f_0\right>x_n\notin \C f_0$. Since $x_n$ and $x_m$ are linearly independent, $\left<x_n,f_0\right>x_m-\left<x_m,f_0\right>x_n$ is equal to $0$ only if $\left<x_n,f_0\right>=\left<x_m,f_0\right>=0$ which is not the case, by the choice of $f_0$. Moreover,
\[
\left<\left<x_n,f_0\right>x_m-\left<x_m,f_0\right>x_n,f_0\right>=0,
\]
hence $\left<x_n,f_0\right>x_m-\left<x_m,f_0\right>x_n\notin \C f_0$. Then, by the Baire Category Theorem, the set
\[
A:=\bigcap_{n\neq m}A_{n,m}
\]
is a dense $G_{\delta}$-subset of $f_0^{\perp}$.

Now, as we saw in the proof of Lemma \ref{lem1-inf-case}, by the choice of $f_0$ and the Baire Category Theorem, the set $B$ of those vectors $b$ in $H$ such that $b$ is orthogonal to $f_0$ and satisfies $\left<x_n,b\right>\neq 0$ and $x_n\notin \C b$ for any $n\geq 0$ is a dense $G_{\delta}$-subset of $f_0^{\perp}$. It still follows from the Baire Category Theorem that $A\cap B$ is not empty. Finally one can check that any $f_1$ in $A\cap B$ satisfies the desired property, which concludes the proof of the claim.
\end{proof}
Let us turn to the proof of the lemma. Since $(x_n)_n$ is linearly independent, we can apply the claim and pick $f_0$ and $f_1$ such that $x_n\notin \C f_i$ and $\left<x_n,f_i\right>\neq 0$, $i=1,2$, and such that $(P(x_n))_n$ is pairwise independent, where $P$ is the orthogonal projection onto $\Span(f_0,f_1)$. For $n\geq 0$, let us write
\[
x_n=P(x_n)+P_{\perp}(x_n),
\]
where $P_{\perp}$ is the orthogonal projection onto $\Span(f_0,f_1)^{\perp}$. If $\Vert P(x_n) \Vert$ is unbounded, then one may extract a subsequence $(x_{n_k})_k$ of $(x_n)_n$ such that $\left<x_n,f_0\right> \rightarrow \infty$ or $\left<x_n,f_1\right> \rightarrow \infty$, as $k\rightarrow \infty$, and then choose $a=f_0$ or $a=f_1$. If not, by compactness of bounded sets in the $2$-dimensional subspace $\Span(f_0,f_1)$, there exists a subsequence $(n_k)_k$ and $w\in \Span(f_0,f_1)$ such that $P(x_{n_k})\rightarrow w$ as $k\rightarrow \infty$. Let us pick $a$ nonzero in $\Span(f_0,f_1)$, orthogonal to $w$. It follows
\[
\left<x_{n_k},a\right>=\left<P(x_{n_k}),a\right>+\left<P_{\perp}(x_{n_k}),a\right>=\left<P(x_{n_k}),a\right>\rightarrow 0,\quad k\rightarrow \infty.
\]
Now, since $\Span(f_0,f_1)$ is $2$-dimensional, the orthogonal to $a$ in $\Span(f_0,f_1)$ is $1$-dimensional. Thus, because $(P(x_n))_n$ is pairwise linearly independent, we may assume, up to take a subsequence, that for any $k\geq 0$, $x_{n_k}\notin \C a$ and $\left<x_{n_k},a\right>=\left<P(x_{n_k}),a\right>\neq 0$. This finishes the proof of the lemma. 
\end{proof}

We can now turn to the proof of Lemma \ref{lemred-inf-case}.

\begin{proof}[Proof of Lemma \ref{lemred-inf-case}]Let $(\lambda_k)_k$ be a linearly independent sequence in $\Gamma$. By Lemma \ref{lem2-inf-case}, one gets a subsequence that we still denote $(\lambda_k)_k$ and one can pick $f_0:=a$ such that, $\left<\lambda_k,f_0\right>\neq 0$ and $\lambda_k\notin \C f_0$ for any $k\geq 0$, and $\left<\lambda_k,f_0\right>\rightarrow 0$ or $\infty$ as $k\rightarrow \infty$. By Lemma \ref{lem1-inf-case}, we can complete $f_0$ into an orthonormal basis $(f_{2m})_m$ of $\tilde{H}:=\overline{\Span(f_0,\lambda_k,\,k\geq 0)}$ , such that $\left<\lambda_k,f_{2m}\right>\neq 0$ for any $k,m\geq 0$. To finish, we complete $(f_{2m})_m$ into an orthonormal basis $(f_m)_m$ of $\ell^2(\N)$ by choosing an orthonormal basis of $(\tilde{H})^{\perp}$. It then suffices to set $I_1=\{2m,m\in \N\}$, $I_2=\{2m+1,m\in \N\}$ and $i_0=0$. 
\end{proof}

\begin{remark}The proof of Lemma \ref{lemred-inf-case}, based on that of Lemmas \ref{lem1-inf-case} and \ref{lem2-inf-case}, is actually a refinement of that of Lemma \ref{lemred}. The compactness of the finite dimensional unit sphere used in the proof of Lemma \ref{lemred} remains the very key-point in the infinite dimensional setting of Lemma \ref{lem2-inf-case}.
\end{remark}

\subsubsection{Proof of Lemma \ref{lemce-inf-case}}\label{subsubproof2}It consists in a technically involved adaptation of Lemma \ref{lemce}.

\begin{proof}[Proof of Lemma \ref{lemce-inf-case}]Let $(\lambda_{n})_{n\in\N}\in\ell^{2}(\N)$ be as in the statement of the lemma. In order to simplify the notations, we may and shall assume that $0\in I_1$ and that $i_0=0$.
Now, observe that given an orthonormal family $(z_i)_i$ in a Hilbert space $H$,
$$\Gamma ^f_{(z_i)_i}= \left\{\sum _{i\geq 0}\lambda _n^{(i)}z_i,\,n\geq 0\right\}=\left\{\sum _{i\in I_1}\lambda _n^{(i)}z_i,\,n\geq 0\right\}.$$
In other words, in the orthonormal family $(z_i)_{i\in\N}$, the elements $z_i$ with $i\in I_2$ play no role and can be chosen arbitrarily. Then, since the case $I_1$ finite has been treated in Lemma \ref{lemce}, we will also assume that $I_1=\N$.

We assume first that $\lambda_{n}^{(0)}\rightarrow 0$, $n\rightarrow \infty$. 
We consider the operator $T:=B_{w} \bigoplus_{i\geq 1} B_{v}$ defined on the $\ell^2$ direct sum of $\ell^2(\Z)$ spaces $H:=\left(\oplus_{i=0}^{\infty}\ell^{2}(\Z)\right)_{\ell^2}$, where the weighted backward shifts $B_{v}$ and $B_{w}$ are defined by weights $v$ and $w$ given respectively by

$$v_i=\begin{cases}
2& \text{ if }i>0\\
\frac{1}{2}& \text{ if }i\leq 0
\end{cases}\quad \text{and}\quad w_i=\begin{cases}
2& \text{ if }i>0\\
1& \text{ if }i\leq 0
\end{cases}.$$
Let $(y_{n})_{n\in\N}$ be a dense sequence in $H$ satisfying the following notations and properties:
\begin{enumerate}
	\item For every $n\in\N$, $y_n=(y_{n,k})_{k\geq 0}$;
%	\item For every $n,k\in\N$, $y_{n,k}\in\c_{00}(\Z)$;
	\item For every $n\in\N$, every $k\geq n$, $y_{n,k}=0$;\label{c00y}
	\item For every $n,k\in\N$ and every $\vert j\vert \geq n$, $y_{n,k}(j)=0$;
	\item For every $n,k,j\in\N$, $\vert y_{n,k}(j)\vert \leq n$.\label{Coefy}
\end{enumerate}
It should be clear that such a sequence exists in $H$. We also define the degree $d_n$ of an element of this sequence by 
\[d_n:=\max\left(\max(\vert j\vert:\exists l\leq n, \exists k\in\N: y_{l,k}(j)\neq 0),\max(k:\exists l\leq n: y_{l,k}\neq 0)\right).\]
Then, with our assumptions on the sequence $(y_n)_{n\in\N}$ we deduce that $d_n\leq n$ for every $n\in\N$. 
In what follows, we denote by $F_\frac{1}{v}$ (resp. $F_\frac{1}{w}$) the weighted forward shift being the inverse of $B_v$ (resp. $B_w$). In other words, $F_{\frac{1}{v}}$ is the weighted forward shift $F_\nu$ where $\nu_i=\begin{cases}
\frac{1}{2}\text{ if } i\geq 0\\
2\text{ if } i<0
\end{cases}$ and $F_{\frac{1}{w}}$ is the weighted forward shift $F_\omega$ where $\omega_i=\begin{cases}
\frac{1}{2}\text{ if } i\geq 0\\
1\text{ if } i<0
\end{cases}$.
Remark that $T$ cannot be hypercyclic since $B_w$ is expansive and then not hypercyclic on $\ell^{2}(\Z)$.

From now on, we construct by induction an increasing function $\varphi:\N\to\N$ and an increasing sequence $(m_k)_{k\in\N}\subset\N$ satisfying:
\begin{enumerate}[(i)]
	\item $\Vert\frac{1}{\lambda_{\varphi(k)}^{(i)}} F_{\frac{1}{v}}^{m_k}y_{k,i}\Vert<2^{-k}$ for every $i\geq 1$\label{nbcritIa};
	\item $\Vert\frac{1}{\lambda_{\varphi(k)}^{(0)}} F_{\frac{1}{w}}^{m_k}y_{k,0}\Vert<2^{-k}$\label{nbcritIb};
	\item $\Vert\frac{\lambda_{\varphi(j)}^{(i)}}{\lambda_{\varphi(k)}^{(i)}}F_{\frac{1}{v}}^{m_k-m_j}y_{k,i}\Vert<2^{-(k+i)}$ for every $i\geq 1$ and every $j<k$\label{nbcritIIa};
	\item $\Vert\frac{\lambda_{\varphi(j)}^{(0)}}{\lambda_{\varphi(k)}^{(0)}}F_{\frac{1}{w}}^{m_k-m_j}y_{k,0}\Vert<2^{-k}$ for every $j<k$\label{nbcritIIb};
	\item $\Vert\frac{\lambda_{\varphi(k)}^{(i)}}{\lambda_{\varphi(j)}^{(i)}}B_{v}^{m_k-m_j}y_{j,i}\Vert<2^{-(k+i)}$ for every $i\geq 1$ and every $j<k$\label{nbcritIIIa};
	\item $\Vert\frac{\lambda_{\varphi(k)}^{(0)}}{\lambda_{\varphi(j)}^{(0)}}B_{w}^{m_k-m_j}y_{j,0}\Vert<2^{-k}$ for every $j<k$\label{nbcritIIIb};
%	\item For every $i\geq2$, $\Vert \lambda_{\varphi(k)}^{(i)}B_{v}^{m_k}(e_0)\Vert\leq 2^{-k}$\label{CritsupIa};
%	\item $\Vert \lambda_{\varphi(k)}^{(1)}B_{w}^{m_k}(e_0)\Vert\leq 2^{-k}$\label{CritsupIb};
	\item $\left(F_{\frac{1}{v}}^{m_k}y_{k,i}\right)(0)=0$ for every $i\geq 1$\label{CritsupIIa};
	\item $\left(F_{\frac{1}{w}}^{m_k}y_{k,0}\right)(0)=0$.\label{CritsupIIb}
	\item $\left\Vert\left(\lambda_{\varphi(k)}^{(i)} \right)_i\right\Vert_2\leq \frac{2^{m_k-1}}{k}$.\label{Derargb}
\end{enumerate}
Once this construction has been made until step $k-1$, since for every $j<k$ and every $m>0$
\[\Vert B_{w}^{m-m_j}y_{j,0}\Vert<2^{d_k+1}\max_{l\leq k}\Vert y_{l,0}\Vert
,\]
then it suffices to choose $\varphi(k)$ so that $\lambda_{\varphi(k)}^{(0)}$ is a small enough element of the converging to $0$ sequence $(\lambda_{n}^{(0)})_{n\in\N}$ to ensure that (\ref{nbcritIIIb}) is satisfied. 
Moreover, one can choose $m_k$ sufficiently large for (\ref{nbcritIb}) and (\ref{nbcritIIb}) and (\ref{Derargb}) to hold, since for every $y\in c_{00}(\Z)$,
\[F_{\frac{1}{w}}^{m}y\underset{m\to+\infty}{\longrightarrow}0.\]
Furthermore, thanks to property (\ref{c00y}) of the sequence $(y_n)_{n\in\N}$ we can take $m_k$ even larger in order to satisfy (\ref{nbcritIa}), (\ref{nbcritIIa}) and (\ref{nbcritIIIa}) since for every $y\in c_{00}(\Z)$,
\[\ F_{\frac{1}{v}}^{m}y\underset{m\to+\infty}{\longrightarrow}0\text{ and }B_{v}^{m}y\underset{m\to+\infty}{\longrightarrow}0.\]
Finally, up to take $m_k$ bigger again, (\ref{CritsupIIa}) and  (\ref{CritsupIIb}) can be satisfied because $d_k\leq k$.  
This finishes the construction of the sequences $(\lambda_{\varphi(k)}^{(i)})_{k\in\N}$ for $i\geq 0$  and $(m_k)_{k\in\N}$. 

Now, thanks to (\ref{nbcritIa}) and (\ref{nbcritIb}) we define the orthogonal family $\{\tilde{z_i}\}_{i\in\N}$ in $H$ by
\[\tilde{z_0}:=\left(e_0+\sum_{j=0}^{\infty}\frac{1}{\lambda_{\varphi(j)}^{(0)}} F_{\frac{1}{w}}^{m_j}y_{j,0},0,\ldots\right)
\]
and for every $i\geq 1$
\[\tilde{z_i}:=\left(\underbrace{0,\ldots,0}_{i\text{ times}},e_0+\sum_{j=0}^{\infty}\frac{1}{\lambda_{\varphi(j)}^{(i)}} F_{\frac{1}{v}}^{m_j}y_{j,i},0,\ldots\right).
\]
We deduce from (\ref{nbcritIa}), (\ref{nbcritIb}), (\ref{CritsupIIa}) and (\ref{CritsupIIb}) that for every $i\in _N$, $1\leq\Vert \tilde{z_i}\Vert\leq 1+ \sum_{j=0}^{\infty}2^{-j}= 3$.  
Now, we normalize this family $z_i:=\frac{\tilde{z_i}}{\Vert\tilde{z_i}\Vert}$ to obtain an orthonormal family $\{z_i\}_{i\in\N}$ in $H$. 
Observe now that the diagonal operator $D:H\to H$ defined by $D\left((x_i)_{i\in\N}\right)=\left(\Vert \tilde{z_i}\Vert^{-1} x_i\right)$ is well-defined since for every $i\in\N$, $\Vert \tilde{z_i}\Vert \geq 1$, has dense range and commutes with $T$.

We claim that $T$ is $\Gamma_{(z_i)_i}^f$-hypercyclic. Thus, it suffices to prove that the orbit of $\left\{\sum_{i=0}^{\infty}\lambda_{\varphi(k)}^{(i)}z_i\right\}_{k\in\N}$ under $T$ is dense in $H$. Furthermore, from the properties of $D$, we remark that for every $m\in\N$,
\[T^m\left(\sum_{i=0}^{\infty}\lambda_{\varphi(k)}^{(i)}z_i\right)=D\circ T^m\left(\sum_{i=0}^{\infty}\lambda_{\varphi(k)}^{(i)}\tilde{z_i}\right).
\]
Then, by density of the range of $D$, we deduce that the orbit of $\left\{\sum_{i=0}^{\infty}\lambda_{\varphi(k)}^{(i)}z_i\right\}_{k\in\N}$ under $T$ is dense in $H$ whenever the orbit of $\left\{\sum_{i=0}^{\infty}\lambda_{\varphi(k)}^{(i)}\tilde{z_i}\right\}_{k\in\N}$ under $T$ is dense in $H$.
Thus, it suffices to prove that $T$ is $\Gamma_{(\tilde{z_i})_i}^f$-hypercyclic.
Let $k\in\N$,

\begin{align*}
&\left\Vert T^{m_k}\left(\sum_{i=0}^{\infty}\lambda_{\varphi(k)}^{(i)}\tilde{z_i}\right)-y_{k}\right\Vert^2\\
=&\left\Vert \left(\lambda_{\varphi(k)}^{(0)} B_{w}^{m_k}\left(e_0+\sum_{j=0}^{\infty}\frac{1}{\lambda_{\varphi(j)}^{(0)}} F_{\frac{1}{w}}^{m_j}y_{j,0}\right)-y_{k,0} \right)\bigoplus_{i=1}^{\infty} \left(\lambda_{\varphi(k)}^{(i)} B_{v}^{m_k}\left(e_0+\sum_{j=0}^{\infty}\frac{1}{\lambda_{\varphi(j)}^{(i)}} F_{\frac{1}{v}}^{m_j}y_{j,i}\right)-y_{k,i}\right)\right\Vert ^2 \\
=&\left\Vert \lambda_{\varphi(k)}^{(0)} B_{w}^{m_k}\left(e_0+\sum_{j=0}^{\infty}\frac{1}{\lambda_{\varphi(j)}^{(0)}} F_{\frac{1}{w}}^{m_j}y_{j,0}\right)-y_{k,0}\right\Vert ^2+\sum_{i=1}^{\infty}\left\Vert \lambda_{\varphi(k)}^{(i)} B_{v}^{m_k}\left(e_0+\sum_{j=0}^{\infty}\frac{1}{\lambda_{\varphi(j)}^{(i)}} F_{\frac{1}{v}}^{m_j}y_{j,i}\right)-y_{k,i}\right\Vert ^2.
\end{align*}

Now using (\ref{nbcritIIa}) and (\ref{nbcritIIIa}), we get for every $i\geq 1$,

\begin{align*}
&\left\Vert \lambda_{\varphi(k)}^{(i)} B_{v}^{m_k}\left(e_0+\sum_{j=0}^{\infty}\frac{1}{\lambda_{\varphi(j)}^{(i)}} F_{\frac{1}{v}}^{m_j}y_{j,i}\right)-y_{k,i}\right\Vert \\
\leq&\left\Vert\lambda_{\varphi(k)}^{(i)} B_{v}^{m_k}(e_0)\right\Vert+
\left\Vert \sum_{j<k}\frac{\lambda_{\varphi(k)}^{(i)}}{\lambda_{\varphi(j)}^{(i)}} B_{v}^{m_k}F_{\frac{1}{v}}^{m_j}y_{j,i}\right\Vert+\left\Vert\frac{\lambda_{\varphi(k)}^{(i)}}{\lambda_{\varphi(k)}^{(i)}} B_{v}^{m_k}F_{\frac{1}{v}}^{m_k}y_{k,i}-y_{k,i}\right\Vert+\left\Vert \sum_{j>k}\frac{\lambda_{\varphi(k)}^{(i)}}{\lambda_{\varphi(j)}^{(i)}} B_{v}^{m_k}F_{\frac{1}{v}}^{m_j}y_{j,i}\right\Vert\\
\leq& \left\Vert\lambda_{\varphi(k)}^{(i)} B_{v}^{m_k}(e_0)\right\Vert+\sum_{j<k}\left\Vert\frac{\lambda_{\varphi(k)}^{(i)}}{\lambda_{\varphi(j)}^{(i)}} B_{v}^{m_k-m_j}y_{j,i}\right\Vert+ \sum_{j>k}\left\Vert\frac{\lambda_{\varphi(k)}^{(i)}}{\lambda_{\varphi(j)}^{(i)}} F_{\frac{1}{v}}^{m_j-m_k}y_{j,i}\right\Vert\\
\leq& \left\vert\lambda_{\varphi(k)}^{(i)} \frac{1}{2^{m_k-1}}\right\vert+\sum_{j<k}\frac{1}{2^{k+i}}+\sum_{j>k}\frac{1}{2^{j+i}}\\
\leq& \left\vert\lambda_{\varphi(k)}^{(i)} \frac{1}{2^{m_k-1}}\right\vert+\frac{k+1}{2^k}\times\frac{1}{2^i}.
\end{align*}
On the other hand, similar computations with (\ref{nbcritIIb}) and (\ref{nbcritIIIb}) yield,

\[\left\Vert \lambda_{\varphi(k)}^{(0)} B_{w}^{m_k}\left(e_0+\sum_{j=0}^{\infty}\frac{1}{\lambda_{\varphi(j)}^{(0)}} F_{\frac{1}{w}}^{m_j}y_{j,0}\right)-y_{k,0}\right\Vert\leq 2\vert \lambda_{\varphi(k)}^{(0)}\vert+\frac{k+1}{2^k}.\]
Altogether we obtain, for every $k\in\N$,

\begin{align*}
\left\Vert T^{m_k}\left(\sum_{i=0}^{\infty}\lambda_{\varphi(k)}^{(i)}\tilde{z_i}\right)-y_{k}\right\Vert &\leq \sqrt{\left(2\vert \lambda_{\varphi(k)}^{(0)}\vert+\frac{k+1}{2^k}\right)^2+\left\Vert\left( \frac{k+1}{2^k}\times\frac{1}{2^i}+\vert\lambda_{\varphi(k)}^{(i)}\vert \frac{1}{2^{m_k-1}}\right)_{i\geq 1}\right\Vert_2^2}\\
&\leq 2\vert \lambda_{\varphi(k)}^{(1)}\vert+\frac{k+1}{2^k}+\left\Vert\left( \frac{k+1}{2^k}\times\frac{1}{2^i}+\vert\lambda_{\varphi(k)}^{(i)}\vert \frac{1}{2^{m_k-1}}\right)_{i\geq 1}\right\Vert_2\\
&\leq 2\vert \lambda_{\varphi(k)}^{(0)}\vert+ \frac{7}{3}\frac{k+1}{2^k}+\left\Vert\left(\lambda_{\varphi(k)}^{(i)}\right)_{i\geq 1}\right\Vert_2 \frac{1}{2^{m_k-1}}.
\end{align*}
Now we use the fact that $\lambda_{\varphi(k)}^{(0)}$ tends to 0 and (\ref{Derargb}) to remark that the preceding expression tends to 0 as $k$ tends to infinity. 
Since the sequence $(y_{k})_{k\in\N}$ is dense in $X$, we conclude that $T$ is $\Gamma_{(\tilde{z_i})_i}^f$-hypercyclic. 
\medskip{}

Let us now deal with the second case that is $\vert\lambda_{n}^{(0)}\vert\rightarrow \infty$, $n\rightarrow \infty$. The proof resembles the previous one, yet we prefer to include all the details. As before we consider the operator $T=B_{w} \bigoplus_{i\geq 1} B_{v}$ defined on the $\ell^2$ direct sum of $\ell^2(\Z)$ spaces $H=\left(\oplus_{i=0}^{\infty}\ell^{2}(\Z)\right)_{\ell^2}$, where the weight $v$ is the same as before (\emph{i.e.} $v_i=2$ if $i>0$; $v_i=1/2$ if $i\leq 0$), but $w$ is now given by
$$w_i=\begin{cases}
1& \text{ if }i>0\\
\frac{1}{2}& \text{ if }i\leq 0
\end{cases}.$$
We also define $(y_{n})_{n\in\N}$ and $(d_n)_{n\in\N}$ as in the previous case and still denote by $F_\frac{1}{v}$ (resp. $F_\frac{1}{w}$) the inverse of $B_v$ (resp. $B_w$), \emph{i.e} the weighted forward shift $F_\nu$ (resp. $F_\omega$) with
$$\nu_i=\begin{cases}
\frac{1}{2}\text{ if } i\geq 0\\
2\text{ if } i<0
\end{cases}\quad \text{(resp. }\omega_i=\begin{cases}
1\text{ if } i\geq 0\\
2\text{ if } i<0
\end{cases}\text{)}.$$
$T$ is not hypercyclic since $\Vert B_w\Vert\leq 1$. Now, as in the previous case, we construct an increasing function $\varphi:\N\to\N$ and an increasing sequence $(m_k)_{k\in\N}\subset\N$ satisfying:
\begin{enumerate}[(i)]
	\item $\Vert\frac{1}{\lambda_{\varphi(k)}^{(i)}} F_{\frac{1}{v}}^{m_k}y_{k,i}\Vert<2^{-k}$ for every $i\geq 1$\label{nbcritIa2};
	\item $\Vert\frac{1}{\lambda_{\varphi(k)}^{(0)}} F_{\frac{1}{w}}^{m_k}y_{k,0}\Vert<2^{-k}$\label{nbcritIb2};
	\item $\Vert\frac{\lambda_{\varphi(j)}^{(i)}}{\lambda_{\varphi(k)}^{(i)}}F_{\frac{1}{v}}^{m_k-m_j}y_{k,i}\Vert<2^{-(k+i)}$ for every $i\geq 1$ and every $j<k$\label{nbcritIIa2};
	\item $\Vert\frac{\lambda_{\varphi(j)}^{(0)}}{\lambda_{\varphi(k)}^{(0)}}F_{\frac{1}{w}}^{m_k-m_j}y_{k,0}\Vert<2^{-k}$ for every $j<k$\label{nbcritIIb2};
	\item $\Vert\frac{\lambda_{\varphi(k)}^{(i)}}{\lambda_{\varphi(j)}^{(i)}}B_{v}^{m_k-m_j}y_{j,i}\Vert<2^{-(k+i)}$ for every $i\geq 1$ and every $j<k$\label{nbcritIIIa2};
	\item $\Vert\frac{\lambda_{\varphi(k)}^{(0)}}{\lambda_{\varphi(j)}^{(0)}}B_{w}^{m_k-m_j}y_{j,0}\Vert<2^{-k}$ for every $j<k$\label{nbcritIIIb2};
	\item $\left(F_{\frac{1}{v}}^{m_k}y_{k,i}\right)(0)=0$ for every $i\geq 1$\label{CritsupIIa2};
	\item $\left(F_{\frac{1}{w}}^{m_k}y_{k,0}\right)(0)=0$;\label{CritsupIIb2}
	\item $\left\vert\lambda_{\varphi(k)}^{(0)}\right\vert\leq \frac{2^{m_k-1}}{k}$;\label{Derarga2}
	\item $\left\Vert\left(\lambda_{\varphi(k)}^{(i)} \right)_i\right\Vert_2\leq \frac{2^{m_k-1}}{k}$.\label{Derargb2}
\end{enumerate}
Once this construction has been made until step $k-1$, since for every $j<k$ and every $m>0$
\[\Vert F_{\frac{1}{w}}^{m-m_j}y_{j,0}\Vert<2^{d_k}\max_{l\leq k}\Vert y_{l,0}\Vert,
\]
then it suffices to choose $\varphi(k)$ such that $\lambda_{\varphi(k)}^{(0)}$ is a large enough element of the converging to $\infty$ sequence $(\lambda_{n}^{(0)})_{n\in\N}$ to ensure that (\ref{nbcritIb2}) and (\ref{nbcritIIb2}) are satisfied. 
Moreover, one can choose $m_k$ sufficiently large for (\ref{Derarga2}) and (\ref{Derargb2}) to hold and also (\ref{nbcritIIIb2}), since for every $y\in c_{00}(\Z)$,
\[B_{w}^{m}y\underset{m\to+\infty}{\longrightarrow}0.\]
Furthermore, thanks to property (\ref{c00y}) of the sequence $(y_n)_{n\in\N}$ we can take $m_k$ even larger in order to satisfy (\ref{nbcritIa2}), (\ref{nbcritIIa2}) and (\ref{nbcritIIIa2}) for every $y\in c_{00}(\Z)$,
\[\ F_{\frac{1}{v}}^{m}y\underset{m\to+\infty}{\longrightarrow}0\text{ and }B_{v}^{m}y\underset{m\to+\infty}{\longrightarrow}0.\]
Finally, up to take $m_k$ bigger again, (\ref{CritsupIIa2}) and  (\ref{CritsupIIb2}) can be satisfied because $d_k\leq k$.  
This finishes the construction of the sequences $(\lambda_{\varphi(k)}^{(i)})_{k\in\N}$ for $i\geq 0$  and $(m_k)_{k\in\N}$. 

Now, thanks to (\ref{nbcritIa2}) and (\ref{nbcritIb2}) we define the orthogonal family $\{\tilde{z_i}\}_{i\in\N}$ in $H$ by
\[\tilde{z_0}:=\left(e_0+\sum_{j=0}^{\infty}\frac{1}{\lambda_{\varphi(j)}^{(0)}} F_{\frac{1}{w}}^{m_j}y_{j,0};0;\ldots\right)\]
and for every $i\geq 1$,
\[\tilde{z_i}:=\left(\underbrace{0;\ldots;0}_{i\text{ times}};e_0+\sum_{j=0}^{\infty}\frac{1}{\lambda_{\varphi(j)}^{(i)}} F_{\frac{1}{v}}^{m_j}y_{j,i};0;\ldots\right).
\]
We deduce from (\ref{nbcritIa2}), (\ref{nbcritIb2}), (\ref{CritsupIIa2}) and (\ref{CritsupIIb2}) that $1\leq\Vert \tilde{z_i}\Vert\leq 1+ \sum_{j=0}^{\infty}2^{-j}= 3$ for every $i\in\N$.  
Now, we normalize this family $z_i:=\frac{\tilde{z_i}}{\Vert\tilde{z_i}\Vert}$ to obtain an orthonormal family $\{z_i\}_{i\in\N}$ in $H$. 
Observe now that the diagonal operator $D:H\to H$ defined by $D\left((x_i)_{i\in\N}\right)=\left(\Vert \tilde{z_i}\Vert^{-1} x_i\right)$ is well-defined since for every $i\in\N$, $\Vert \tilde{z_i}\Vert \geq 0$, has dense range and commutes with $T$. As in the previous case, the properties of $D$ ensure that it suffices to prove that $T$ is $\Gamma_{(\tilde{z_i})_i}^f$-hypercyclic. For $k\in\N$,

\begin{align*}
&\left\Vert T^{m_k}\left(\sum_{i=0}^{\infty}\lambda_{\varphi(k)}^{(i)}\tilde{z_i}\right)-y_{k}\right\Vert^2\\
%=&\left\Vert \left(\lambda_{\varphi(k)}^{(1)} B_{w}^{m_k}\left(e_0+\sum_{j=0}^{\infty}\frac{1}{\lambda_{\varphi(j)}^{(1)}} F_{\frac{1}{w}}^{m_j}y_{j,1}\right)-y_{k,1} \right)\bigoplus_{i=2}^{\infty} \left(\lambda_{\varphi(k)}^{(i)} B_{v}^{m_k}\left(e_0+\sum_{j=0}^{\infty}\frac{1}{\lambda_{\varphi(j)}^{(i)}} F_{\frac{1}{v}}^{m_j}y_{j,i}\right)-y_{k,i}\right)\right\Vert ^2 \\
=&\left\Vert \lambda_{\varphi(k)}^{(0)} B_{w}^{m_k}\left(e_0+\sum_{j=0}^{\infty}\frac{1}{\lambda_{\varphi(j)}^{(0)}} F_{\frac{1}{w}}^{m_j}y_{j,0}\right)-y_{k,0}\right\Vert ^2+\sum_{i=1}^{\infty}\left\Vert \lambda_{\varphi(k)}^{(i)} B_{v}^{m_k}\left(e_0+\sum_{j=0}^{\infty}\frac{1}{\lambda_{\varphi(j)}^{(i)}} F_{\frac{1}{v}}^{m_j}y_{j,i}\right)-y_{k,i}\right\Vert ^2.
%&\leq N\frac{k+1}{2^k}\underset{k\to+\infty}{\longrightarrow}0.&
\end{align*}

Now using (\ref{nbcritIIa2}) and (\ref{nbcritIIIa2}), we get for every $i\geq 1$,

\begin{align*}
& \left\Vert \lambda_{\varphi(k)}^{(i)} B_{v}^{m_k}\left(e_0+\sum_{j=0}^{\infty}\frac{1}{\lambda_{\varphi(j)}^{(i)}} F_{\frac{1}{v}}^{m_j}y_{j,i}\right)-y_{k,i}\right\Vert \\
\leq & \left\Vert\lambda_{\varphi(k)}^{(i)} B_{v}^{m_k}(e_0)\right\Vert+
\left\Vert \sum_{j<k}\frac{\lambda_{\varphi(k)}^{(i)}}{\lambda_{\varphi(j)}^{(i)}} B_{v}^{m_k}F_{\frac{1}{v}}^{m_j}y_{j,i}\right\Vert+\left\Vert\frac{\lambda_{\varphi(k)}^{(i)}}{\lambda_{\varphi(k)}^{(i)}} B_{v}^{m_k}F_{\frac{1}{v}}^{m_k}y_{k,i}-y_{k,i}\right\Vert+\left\Vert \sum_{j>k}\frac{\lambda_{\varphi(k)}^{(i)}}{\lambda_{\varphi(j)}^{(i)}} B_{v}^{m_k}F_{\frac{1}{v}}^{m_j}y_{j,i}\right\Vert\\
\leq & \left\Vert\lambda_{\varphi(k)}^{(i)} B_{v}^{m_k}(e_0)\right\Vert+\sum_{j<k}\left\Vert\frac{\lambda_{\varphi(k)}^{(i)}}{\lambda_{\varphi(j)}^{(i)}} B_{v}^{m_k-m_j}y_{j,i}\right\Vert+ \sum_{j>k}\left\Vert\frac{\lambda_{\varphi(k)}^{(i)}}{\lambda_{\varphi(j)}^{(i)}} F_{\frac{1}{v}}^{m_j-m_k}y_{j,i}\right\Vert\\
\leq & \left\vert\lambda_{\varphi(k)}^{(i)} \frac{1}{2^{m_k-1}}\right\vert+\sum_{j<k}\frac{1}{2^{k+i}}+\sum_{j>k}\frac{1}{2^{j+i}}\\
\leq & \left\vert\lambda_{\varphi(k)}^{(i)} \frac{1}{2^{m_k-1}}\right\vert+\frac{k+1}{2^k}\times\frac{1}{2^i}.
\end{align*}
On the other hand, similar computations with (\ref{nbcritIIb2}) and (\ref{nbcritIIIb2}) give,

\[\left\Vert \lambda_{\varphi(k)}^{(0)} B_{w}^{m_k}\left(e_0+\sum_{j=0}^{\infty}\frac{1}{\lambda_{\varphi(j)}^{(0)}} F_{\frac{1}{w}}^{m_j}y_{j,0}\right)-y_{k,0}\right\Vert\leq \frac{\vert \lambda_{\varphi(k)}^{(0)}\vert}{2^{m_k-1}}+\frac{k+1}{2^k}.\]
Altogether we get, for every $k\in\N$,

\begin{align*}
\left\Vert T^{m_k}(\sum_{i=0}^{\infty}\lambda_{\varphi(k)}^{(i)}\tilde{z_i})-y_{k}\right\Vert &\leq \sqrt{\left(\frac{\vert \lambda_{\varphi(k)}^{(0)}\vert}{2^{m_k-1}}+\frac{k+1}{2^k}\right)^2+\left\Vert\left( \frac{k+1}{2^k}\times\frac{1}{2^i}+\vert\lambda_{\varphi(k)}^{(i)}\vert \frac{1}{2^{m_k-1}}\right)_{i\geq 1}\right\Vert_2^2}\\
&\leq \frac{\vert \lambda_{\varphi(k)}^{(0)}\vert}{2^{m_k-1}}+\frac{k+1}{2^k}+\left\Vert\left( \frac{k+1}{2^k}\times\frac{1}{2^i}+\vert\lambda_{\varphi(k)}^{(i)}\vert \frac{1}{2^{m_k-1}}\right)_{i\geq 1}\right\Vert_2\\
&\leq \frac{\vert \lambda_{\varphi(k)}^{(0)}\vert}{2^{m_k-1}}+ \frac{7}{3}\frac{k+1}{2^k}+\left\Vert\left(\lambda_{\varphi(k)}^{(i)}\right)_{i\geq 1}\right\Vert_2 \frac{1}{2^{m_k-1}}.% \underset{k\to+\infty}{\longrightarrow}0
\end{align*}
Now we use (\ref{Derarga2}) and (\ref{Derargb2}) to remark that the preceding expression tends to 0 as $k$ tends to infinity. 
Since the sequence $(y_{k})_{k\in\N}$ is dense in $X$, we conclude that $T$ is $\Gamma_{(\tilde{z_i})_i}^f$-hypercyclic.

\end{proof}

\begin{remark}\label{rem-codim}Given $\Gamma$ not contained in a finite union of vector annuli, the previous proof provides us explicitly with a Hilbert space $H$, a non-hypercyclic operator $T\in \LL(H)$ and an orthonormal family $(z_n)_n$ in $H$ such that $T$ is $\Gamma _{(z_n)_n}$-hypercyclic. But it is quite transparent that the construction imposes that $\overline{\Span}(z_n,\,n\geq 0)$ has \emph{infinite} codimension. This is the technical obstruction which won't allow us to obtain a complete characterization of hypercyclic subsets (see Section \ref{pf-thmA} and the proof of Theorem~A).
\end{remark}

\subsection{Bourdon-Feldman scalar subsets}

Let $X$ be a separable complex Banach space and $T \in \LL(X)$. We recall that the Bourdon-Feldman Theorem \cite{bourdon_somewhere_2003} asserts that any somewhere dense orbit of a single vector $x\in X$ under the action of $T$ is actually dense. In view of this important result, we introduce the following definitions.

\begin{definition}Let $l\geq 1$. We say that $\Gamma \subset \C^l$ (resp. $\Gamma \subset \ell^2(\N)$) is a \emph{Bourdon-Feldman scalar subset} if for every separable Banach space $X$ (resp. separable Hilbert space $H$), for every $T\in \LL(X)$ (resp. $T\in \LL(H)$) and every linearly independent family $(x_1,\ldots ,x_l)$ in $X$ (resp. any orthonormal family $(x_n)_n$ in $H$),
\[\orb(\Gamma _{x_1,\ldots,x_l},T)\text{ somewhere dense in }X \implies \aorb(x_i,T)=X\text{ for some }i\in \{1,\ldots,l\}
\]
(resp.
\[\orb(\Gamma _{(x_n)_n},T)\text{ somewhere dense in }H \implies \aorb(x_i,T)=H\text{ for some }i \geq 0\text{)}.
\]
\end{definition}

The Bourdon-Feldman Theorem says that any non-zero scalar $\lambda \in \C$ is a Bourdon-Feldman scalar subset. This result was improved in \cite{charpentier_-supercyclicity_2016}, where Bourdon-Feldman scalar subsets of $\C$ were characterized. The statement is as follows.

\begin{theorem}\label{thm-BF-cem}A non-empty subset $\Gamma\neq\{0\}$ of $\C$ is a Bourdon-Feldman scalar subset if and only if $\Gamma\T$ is a nowhere dense hypercyclic scalar subset, \emph{i.e.} there exist $0< a\leq b <\infty$ such that $\Gamma \setminus \{0\} \subset [a,b]\T$ and $\text{Int}(\overline{\Gamma\T})=\emptyset$.
\end{theorem}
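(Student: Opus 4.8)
The statement is, up to two elementary reformulations, exactly \cite[Theorem B]{charpentier_-supercyclicity_2016}, so the plan is to deduce it from \cite[Theorems A and B]{charpentier_-supercyclicity_2016}. I would first identify the right-hand side of Theorem~\ref{thm-BF-cem} with the condition ``$\Gamma$ satisfies \textbf{(P)} and $\Gamma\T$ is nowhere dense in $\C$''. The clause $\text{Int}(\overline{\Gamma\T})=\emptyset$ is literally nowhere density of $\Gamma\T$. For the other clause, note that $|\gamma\lambda|=|\gamma|$ whenever $|\lambda|=1$, so $(\Gamma\T)\setminus\{0\}=(\Gamma\setminus\{0\})\T$ has the same set of moduli as $\Gamma\setminus\{0\}$; hence $\Gamma\setminus\{0\}$ is non-empty, bounded and bounded away from $0$ (i.e. $\Gamma\setminus\{0\}\subset[a,b]\T$ for some $0<a\le b<\infty$) if and only if $\Gamma\T$ is, and by \cite[Theorem A]{charpentier_-supercyclicity_2016} this is precisely the statement that $\Gamma\T$ (equivalently $\Gamma$) is a hypercyclic scalar subset, that is, that $\Gamma$ satisfies \textbf{(P)}.

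Secondly, I would check that for $\Gamma\subset\C$ with $\Gamma\setminus\{0\}\neq\emptyset$, being a Bourdon-Feldman scalar subset is equivalent to satisfying \textbf{(P')}. The forward implication of \textbf{(P')} is precisely the defining requirement of a Bourdon-Feldman scalar subset; the only formal difference is that the definition quantifies over \emph{separable} Banach spaces, but this is harmless, since if $\orb(\Gamma x,T)$ is somewhere dense then, as $\orb(\Gamma x,T)\subset\Span(\orb(x,T))$, the closed subspace $\overline{\Span}(\orb(x,T))$ has non-empty interior and hence equals $X$, so that $X$ is the closed linear span of the countable set $\orb(x,T)$ and is therefore separable. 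The reverse implication of \textbf{(P')} is immediate: fixing any $\gamma_0\in\Gamma\setminus\{0\}$, the map $z\mapsto\gamma_0 z$ is a homeomorphism of $X$, so if $\orb(x,T)$ is dense then $\gamma_0\,\orb(x,T)\subset\orb(\Gamma x,T)$ is dense, hence somewhere dense. Combining these two reformulations with \cite[Theorem B]{charpentier_-supercyclicity_2016} yields Theorem~\ref{thm-BF-cem}, and along this route there is essentially nothing further to prove; the only mild care needed is the above remark on separability and the triviality of the reverse part of \textbf{(P')}.

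If one instead wanted a self-contained argument, the ``only if'' part would split into two cases. When $\Gamma\setminus\{0\}$ is unbounded or not bounded away from $0$, $\Gamma$ fails to be even a hypercyclic scalar subset, so the one-dimensional constructions recalled at the beginning of Paragraph~\ref{pf-thm-Cl-finite} produce a non-hypercyclic operator $T$ on $\ell^2(\N)$ or $\ell^2(\Z)$ together with a non-zero vector $x$ such that $\orb(\Gamma x,T)$ is dense (a fortiori somewhere dense) while $x$, being a vector of a non-hypercyclic operator, is not hypercyclic. When $\overline{\Gamma\T}$ has non-empty interior, one produces a counterexample of weighted-shift type---a direct sum with an expansive or contractive bilateral shift, in the spirit of Lemma~\ref{lemce}---whose $\Gamma$-orbit fills an open set but not the whole space. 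The ``if'' direction is the substantive one: assuming $\Gamma\setminus\{0\}\subset[a,b]\T$ and $\orb(\Gamma x,T)$ somewhere dense, one would first upgrade this to somewhere density of $\orb([a,b]\T x,T)=[a,b]\T\,\aorb(x,T)$ (using compactness of $[a,b]\T$, as in the proof of Lemma~\ref{lem2}), and then run the Bourdon-Feldman machinery, the nowhere density of $\Gamma\T$ being exactly what prevents the accumulation set from being a genuine ``annulus times a proper closed subspace'' rather than all of $X$. Reproving that Bourdon-Feldman step is the genuine obstacle were one to avoid citing \cite{charpentier_-supercyclicity_2016}; the reduction above sidesteps it entirely, which is the route I would follow.
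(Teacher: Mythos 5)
Your proposal is correct and follows the same route as the paper, which states Theorem \ref{thm-BF-cem} as a reformulation of \cite[Theorem B]{charpentier_-supercyclicity_2016} without further proof; your two translations (matching the right-hand side with ``\textbf{(P)} plus nowhere density of $\Gamma\T$'' via the equality of moduli of $\Gamma\setminus\{0\}$ and $(\Gamma\T)\setminus\{0\}$, and identifying the Bourdon--Feldman scalar subset property with \textbf{(P')} via the automatic separability of $X$ and the triviality of the reverse implication) are exactly the routine checks the paper leaves implicit. Nothing further is needed.
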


The only known example of a multidimensional Bourdon-Feldman scalar subset is a finite union of sets of the form $\T f$, $f\in \C^l$ (or $\ell^2(\N)$) non-zero. It can be deduced from \cite[Theorem 3.11]{bayart_dynamics_2009}. Theorems \ref{thm-main} and \ref{thm-main-l2}, together with Theorem \ref{thm-BF-cem}, allow us to obtain a complete characterization of Bourdon-Feldman scalar subsets in $\C^l$ and in $\ell^2(\N)$.

\begin{theorem}\label{thm-BF-scalar}A non-empty subset $\Gamma$ of $\C^l$, $l\geq 1$, (resp. $\ell^2(\N)$) is a Bourdon-Feldman scalar subset if and only if $\Gamma\setminus \{0\}\neq \emptyset$ and there exist $N\in \N$, $g_1,\ldots,g_N$ in $\C^l$ (resp. $\ell^2(\N)$), $\Gamma _1,\ldots \Gamma_N\subset \C$ with $\text{Int}(\overline{\Gamma_i\T})=\emptyset$ for any $1\leq i \leq N$, and $0< a\leq b <\infty$, such that
\[
\Gamma \setminus \{0\} \subset \bigcup _{i=1}^N \Gamma_i g_i.
\]
\end{theorem}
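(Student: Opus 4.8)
The plan is to combine the description of hypercyclic scalar subsets (Theorems~\ref{thm-main} and~\ref{thm-main-l2}) with the one-dimensional Bourdon--Feldman characterization (Theorem~\ref{thm-BF-cem}), using the Baire category theorem to extract, from a somewhere dense ``union orbit'', a single somewhere dense orbit. I treat $\C^l$ and $\ell^2(\N)$ uniformly: write $E$ for either, $(e_m)_m$ for its canonical basis, and call a family $(x_m)_m$ in $X$ (resp. in a Hilbert space $H$) \emph{admissible} if it is linearly independent (resp. orthonormal); for such a family let $F$ be the canonical (isometric, in the Hilbert case) isomorphism from $E$ onto $\Span(x_1,\dots,x_l)$ (resp. onto $\overline{\Span}(x_m,\,m\geq0)$) with $F(e_m)=x_m$, so that $\Gamma_{(x_m)_m}=F(\Gamma)$. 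Throughout $\Gamma\setminus\{0\}\neq\emptyset$, the case $\Gamma\subset\{0\}$ being trivial.

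For the sufficiency, suppose $\Gamma\setminus\{0\}\subset\bigcup_{i=1}^N\Gamma_ig_i$ with each $\Gamma_i\setminus\{0\}\subset[a,b]\T$ and $\text{Int}(\overline{\Gamma_i\T})=\emptyset$. Discarding useless indices I may assume $g_i\neq0$ and $\Gamma_i\neq\{0\}$ for all $i$, and since a somewhere dense orbit of a subset is a somewhere dense orbit of the whole set, the Bourdon--Feldman property descends to subsets, so I may assume $\Gamma=\bigcup_{i=1}^N\Gamma_ig_i$. Given any admissible family $(x_m)_m$ in a space $X$ and any $T\in\LL(X)$ with $\orb(\Gamma_{(x_m)_m},T)$ somewhere dense, set $y_i:=F(g_i)\neq0$; then $\Gamma_{(x_m)_m}=\bigcup_{i=1}^N\Gamma_iy_i$, so $\bigcup_{i=1}^N\aorb(\Gamma_iy_i,T)$ has nonempty interior, and by Baire some $\aorb(\Gamma_{i_0}y_{i_0},T)$ has nonempty interior. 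As $\Gamma_{i_0}\setminus\{0\}$ is bounded and bounded away from $0$ and $\Gamma_{i_0}\T$ is nowhere dense, Theorem~\ref{thm-BF-cem} says $\Gamma_{i_0}$ is a one-dimensional Bourdon--Feldman scalar subset; applied to the nonzero vector $y_{i_0}$ it gives $\aorb(y_{i_0},T)=X$, hence $T$ is hypercyclic, and for $\gamma_0\in\Gamma_{i_0}\setminus\{0\}$ the hypercyclic point $\gamma_0y_{i_0}$ lies in $\Gamma_{(x_m)_m}$. This yields the conclusion required of a Bourdon--Feldman scalar subset.

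The necessity is where the real work lies. First, a dense orbit being somewhere dense, every Bourdon--Feldman scalar subset is a fortiori a hypercyclic scalar subset, so Theorem~\ref{thm-main} (resp.~\ref{thm-main-l2}) gives pairwise linearly independent $g_1,\dots,g_N\in E$ and $0<a\leq b<\infty$ with $\Gamma\setminus\{0\}\subset\bigcup_{i=1}^N[a,b]\T g_i$; setting $\Gamma_i:=\{\gamma\in\C:\gamma g_i\in\Gamma\}$ yields $\Gamma\setminus\{0\}\subset\bigcup_i\Gamma_ig_i$ with $\Gamma_i\setminus\{0\}\subset[a,b]\T$. It remains to show $\text{Int}(\overline{\Gamma_i\T})=\emptyset$. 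If some $\Gamma_{i_0}\T$ were somewhere dense in $\C$, then $\Gamma_{i_0}$ would be a one-dimensional hypercyclic scalar subset which, by Theorem~\ref{thm-BF-cem}, is not a one-dimensional Bourdon--Feldman scalar subset; the construction underlying the ``only if'' part of that theorem in \cite{charpentier_-supercyclicity_2016} then provides a separable Hilbert space $H$, a non-hypercyclic $T\in\LL(H)$ and $z\in H\setminus\{0\}$ with $\orb(\Gamma_{i_0}z,T)$ somewhere dense. Rescaling $z$ (harmless for somewhere density) so that $\|z\|=\|g_{i_0}\|$ in the $\ell^2$ case, I choose an admissible family with $F(g_{i_0})=z$ --- in the $\ell^2$ case an isometric embedding $F:\ell^2(\N)\to H$ with $F(g_{i_0})=z$, in the $\C^l$ case any linear isomorphism onto an $l$-dimensional subspace of $H$ sending $g_{i_0}$ to $z$ (complete $g_{i_0}$ to a basis of $\C^l$ and its image to a linearly independent family starting with $z$). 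Then $\orb(\Gamma_{(x_m)_m},T)=\orb(F(\Gamma),T)\supset\orb(F(\Gamma_{i_0}g_{i_0}),T)=\orb(\Gamma_{i_0}z,T)$ is somewhere dense, while $T$, being non-hypercyclic, has no hypercyclic vector at all; this contradicts $\Gamma$ being a Bourdon--Feldman scalar subset. Hence every $\Gamma_i\T$ is nowhere dense.

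The main obstacle is this last embedding step: one must transport the one-dimensional counterexample of \cite{charpentier_-supercyclicity_2016} into the multidimensional formalism while honouring the rigid admissibility constraint --- orthonormality for $\ell^2(\N)$, which is exactly why the normalization $\|z\|=\|g_{i_0}\|$ is needed, or linear independence for $\C^l$. The spurious coordinate directions $g_i$, $i\neq i_0$, require no attention, precisely because $T$ is chosen non-hypercyclic, so each $F(g_i)$ is automatically non-hypercyclic; all that matters is realizing the prescribed value $F(g_{i_0})=z$ by an admissible family and checking that the displayed inclusion of orbits is genuine.
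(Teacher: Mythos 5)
Your proof is correct and follows essentially the same route as the paper: sufficiency by splitting the finite union and applying the one-dimensional Theorem \ref{thm-BF-cem} to a somewhere dense piece (a finite union of nowhere dense orbits being nowhere dense), and necessity by combining the hypercyclic-scalar-subset characterization of Theorems \ref{thm-main}/\ref{thm-main-l2} with a counterexample transported into the multidimensional formalism when some $\Gamma_{i_0}\T$ is somewhere dense. The only divergence is at that counterexample: you import it as a black box from \cite{charpentier_-supercyclicity_2016}, asserting it lives on a separable \emph{Hilbert} space with a \emph{non-hypercyclic operator}, whereas the bare statement of Theorem \ref{thm-BF-cem} only yields some Banach space and some non-hypercyclic \emph{vector}; the paper therefore rebuilds the example explicitly as $T=e^{\imath\theta}\oplus B_w$ on $\C\oplus\ell^2(\Z)$, using the universality of $e^{\imath\theta}\times B_w$ on $\T\times\ell^2(\Z)$, precisely to secure those two properties, which your embedding step genuinely needs.
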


\begin{proof}We first deal with the "if part". Let $\Gamma_1,\ldots ,\Gamma_N$ be as in the theorem. It is enough to check that for any $x_1,\ldots,x_N \in X$, if the orbit of the set $\bigcup _{i=1}^N\Gamma _ix_i$ is somewhere dense in $X$, then $\orb(x_{i_0},T)$ is dense in $X$ for some $i_0\in \{1,\ldots,N\}$. Now, by \cite[Theorem B]{charpentier_-supercyclicity_2016}, if none of the $\orb(x_{i},T)$ is dense in $X$, $1\leq i\leq N$, then each of the orbit $\orb(\Gamma_i x_{i},T)$, $1\leq i\leq N$, is nowhere dense in $X$, hence $\bigcup _{i=1}^N\Gamma _ix_i$ as well.

Let us now turn to the "only if" part. We only give the proof for $\Gamma\subset \ell^2(\N)$, that for $\Gamma \subset \C^l$ being similar and a bit simpler. Let then $\Gamma$ be a Bourdon-Feldman scalar subset in $\ell^2(\N)$. Since $\Gamma$ is in particular a hypercyclic scalar subset, there exist pairwise distinct elements $g_1,\ldots,g_N$ in $\ell^2(\N)$ and $\Gamma _1,\ldots,\Gamma _N$ non-empty subsets of $\C$, with $\Gamma$ bounded and bounded away from $0$ for any $i=1,\ldots,N$, such that
\[\Gamma \setminus \{0\}= \bigcup _{i=1}^N\Gamma _ig_i.\]
Remark first that \cite[Theorem B]{charpentier_-supercyclicity_2016} contains the case $N=1$, thus we may assume that $N\geq 2$. Assume by contradiction that there exist $1\leq i_0\leq N$ so that $\Gamma _{i_0}\T$ is somewhere dense in $\C$ for some $i_0$ and set without loss of generality $i_0=1$.
Recall that we are looking for a Hilbert space $H$, an operator $T$ on $H$ and an orthonormal sequence $(x_n)_n$ in $H$ so that $\orb(\Gamma_{x_1,\ldots,x_N},T)$ is somewhere dense but $T$ is not hypercyclic.

Let $H=\C\oplus\ell^{2}(\Z)$ and $T=e^{\imath\theta}\oplus B_w$, where $e^{\imath\theta}$ is the rotation operator on $\C$ with $\theta\in\R\setminus{\pi\Q}$, and $w$ is a weight defined by \[w_i=\begin{cases}
	2& \text{ if }i>0\\
	\frac{1}{2}& \text{ if }i\leq 0.
\end{cases}\]
$T$ is not hypercyclic, but $B_w$ satisfies the Hypercyclicity Criterion on $\ell^2(\Z)$. Moreover since the rotation $e^{\imath\theta}$ is universal on $\T\subseteq\C$, it is well-known that $e^{\imath\theta}\times B_w$ acting on $\T\times \ell^2(\Z)$ is universal with universal vector $(1,x)$, where $x$ denotes a hypercyclic vector for $B_w$ (see the proof of \cite[Proposition 4.2]{charpentier_-supercyclicity_2016} for example). We can assume that $\Vert x \Vert > \Vert g_1\Vert$. Then, for some $\lambda \in \R_{+}^{*}$, let $F:\ell^2(\N)\to H$ be an isometry such that $F(g_1)=(\lambda, x)$, and set $(x_n)_n:=(F(e_n))_n$.
Observe that $\Gamma_{(x_n)_n}=F(\Gamma)=\Gamma_1 (\lambda,x)\cup_{i=2}^{N}\Gamma_i F(g_i)$ so that
\[\orb(\Gamma_1 (\lambda,x),T)\subseteq\orb(\Gamma_{(x_n)_n},T).\]
Consider the open set $U:= \lambda V\oplus \ell^2(\Z)$ where $V$ is an open set contained in $\overline{\Gamma_1\T}$ and take $(a,y)\in U$.
Then, by definition of $U$ and by compactness of $\T$, there exist a sequence $(\gamma_k)_k$ in $\Gamma_1$ and $\mu\in[0,2\pi[$ such that $\gamma_k\to \frac{\vert a\vert}{\lambda} e^{\imath\mu}$.
Then by universality of $e^{\imath\theta}\times B_w$, there exists a sequence $(n_k)_k$ such that $e^{\imath\theta n_k}\to \frac{a}{\vert a\vert}e^{-\imath\mu}$ and $B_{w}^{n_k}(x)\to y$ as $k\rightarrow \infty$.
This yields 
\[\gamma_k T^{n_k}(\lambda,x)=(\lambda \gamma_k e^{\imath\theta n_k},\gamma_kB_{w}^{n_k}(x))\underset{k\to+\infty}{\longrightarrow} (a,y)\]
and proves that $\orb(\Gamma_1 (\lambda,x),T)$ is somewhere dense in $H$, finishing the proof.
\end{proof}

\section{Applications: Hypercyclic subsets and Bourdon-Feldman subsets}\label{pf-thmA}

We recall that $T\in\LL(X)$ and $\tilde{T}\in\LL(\tilde{X})$ are conjugate to each other if there exists an isomorphism $S:X\rightarrow \tilde{X}$ such that $\tilde{T}\circ S=S\circ T$. The following proposition can be easily checked.
\begin{proposition}\label{conj}Let $X$ be a Banach space, $C$ a non-empty subset of $X$, and $T\in\LL(X)$ and $\tilde{T}\in\LL(\tilde{X})$ be conjugate to each other. Then $T$ is $C$-hypercyclic if and only if $\tilde{T}$ is $S(C)$-hypercyclic, where $S\in \LL(X,\tilde{X})$ is such that $\tilde{T}\circ S=S\circ T$.
\end{proposition}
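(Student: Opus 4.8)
The plan is to reduce the statement to two elementary facts: that conjugacy intertwines \emph{all} the iterates of the two operators, and that an isomorphism of Banach spaces is in particular a homeomorphism and hence transports dense sets to dense sets in both directions.

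First I would record that, since $\tilde{T}\circ S=S\circ T$, a straightforward induction on $n$ gives $\tilde{T}^n\circ S=S\circ T^n$ for every $n\geq 0$. Applying this to the elements of $C$ yields
\[
\orb(S(C),\tilde{T})=\{\tilde{T}^n(S(x)):n\geq 0,\ x\in C\}=\{S(T^nx):n\geq 0,\ x\in C\}=S\big(\orb(C,T)\big).
\]
Next, because $S\in\LL(X,\tilde{X})$ is an isomorphism, it is a homeomorphism from $X$ onto $\tilde{X}$; in particular, for any subset $A\subset X$ one has $\overline{S(A)}=S(\overline{A})$, so $S(A)$ is dense in $\tilde{X}$ if and only if $A$ is dense in $X$. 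Taking $A=\orb(C,T)$ and combining with the displayed identity, we conclude that $\orb(S(C),\tilde{T})$ is dense in $\tilde{X}$ if and only if $\orb(C,T)$ is dense in $X$, i.e.\ $\tilde{T}$ is $S(C)$-hypercyclic if and only if $T$ is $C$-hypercyclic.

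There is essentially no obstacle here; the only point worth emphasising is that one uses the full strength of $S$ being a continuous linear bijection with continuous inverse, which is precisely what allows one to transport not only density but also somewhere density along conjugacies. This is exactly why the same elementary argument will be invoked repeatedly in the sequel (and for quasi-conjugacies, via maps with dense range) to transfer the properties of being a hypercyclic subset or a Bourdon-Feldman subset between conjugate operators.
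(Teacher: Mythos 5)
Your proof is correct and is exactly the standard argument the paper has in mind (the paper itself omits the proof, stating only that the proposition "can be easily checked"): the induction $\tilde{T}^n\circ S=S\circ T^n$ gives $\orb(S(C),\tilde{T})=S(\orb(C,T))$, and the fact that $S$ is an isomorphism, hence a homeomorphism, transports density in both directions.
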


The next proposition gives an equivalent, but apparently stronger, definition of a hypercyclic subset in $X$.

\begin{proposition}Let $X$ be a separable Banach space and $C$ a non-empty subset of $X$. $C$ is a hypercyclic subset (in the sense of Definition \ref{def-hyp-set}) if and only if for any $T\in \LL(X)$,
$$T\text{ is hypercyclic iff }T\text{ is conjugate to a }C\text{-hypercyclic operator}.$$
\end{proposition}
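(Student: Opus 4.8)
The plan is to prove the two implications separately, the key input being the remark that follows Definition~\ref{def-C-hyp}: as soon as $C\setminus\{0\}$ is non-empty, every hypercyclic operator on $X$ is conjugate to a $C$-hypercyclic one (conjugate $T$ by an isomorphism carrying a hypercyclic vector of $T$ onto a fixed point of $C\setminus\{0\}$, exactly as in the proof of Proposition~\ref{prop-hyp-Gammahyp}). Denote by $(\star)$ the property ``for every $T\in\LL(X)$, $T$ is hypercyclic if and only if $T$ is conjugate to a $C$-hypercyclic operator''.

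First I would show that a hypercyclic subset $C$ satisfies $(\star)$. By Definition~\ref{def-hyp-set}, $C\setminus\{0\}\neq\emptyset$ and every $C$-hypercyclic operator is hypercyclic. If $T$ is hypercyclic, the remark above produces an operator conjugate to $T$ which is $C$-hypercyclic. Conversely, if $T$ is conjugate to a $C$-hypercyclic operator $\tilde{T}$, say $\tilde{T}\circ S=S\circ T$ with $S$ an isomorphism, then $\tilde{T}$ is hypercyclic because $C$ is a hypercyclic subset, and a hypercyclic vector $x$ for $\tilde{T}$ gives the hypercyclic vector $S^{-1}x$ for $T$ since $T^n(S^{-1}x)=S^{-1}\tilde{T}^nx$ and $S^{-1}$ is a homeomorphism. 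Hence $(\star)$ holds.

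Conversely, I would assume $(\star)$ and recover Definition~\ref{def-hyp-set}. To see that $C\setminus\{0\}\neq\emptyset$: if $C\subseteq\{0\}$ then $\orb(C,T)\subseteq\{0\}$ is never dense (as $X\neq\{0\}$), so no $C$-hypercyclic operator exists, and $(\star)$ would force $X$ to carry no hypercyclic operator at all, which is impossible for a separable infinite-dimensional Banach space. Once $C\setminus\{0\}\neq\emptyset$ is known, any $C$-hypercyclic operator $T$ is conjugate to itself via the identity, hence conjugate to a $C$-hypercyclic operator, so $(\star)$ gives that $T$ is hypercyclic; thus $C$ is a hypercyclic subset.

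The argument is essentially bookkeeping around conjugacy, and there is no real obstacle: the single non-formal ingredient is the remark after Definition~\ref{def-C-hyp} (turning a hypercyclic vector into a point of $C$ by an isomorphism), which is already available, together with the elementary fact that conjugacy transports dense orbits to dense orbits. The only point demanding a word of care is the exclusion of the degenerate case $C\subseteq\{0\}$ in the ``only if'' direction, which is precisely why one invokes the existence of a hypercyclic operator on the separable infinite-dimensional space $X$.
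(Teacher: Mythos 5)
Your proof is correct and takes essentially the same route as the paper: for one implication you move a hypercyclic vector onto a point of $C\setminus\{0\}$ by an isomorphism and use that conjugacy preserves hypercyclicity, and for the converse you observe that any $C$-hypercyclic operator is conjugate to itself. Your extra verification that $(\star)$ forces $C\setminus\{0\}\neq\emptyset$ (via the existence of a hypercyclic operator on any infinite-dimensional separable Banach space) is a detail the paper's proof passes over silently, and it is a welcome addition.
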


\begin{proof}
Suppose first that $C$ is a hypercyclic subset.
Then if $T$ is hypercyclic, the argument is the same as in the proof of Proposition \ref{prop-hyp-Gammahyp}. Let $x$ be a hypercyclic vector for $X$ and $z\in C$. We consider a topological isomorphism $S$ of $X$ which maps $z$ to $x$ and define $\tilde{T}:=S^{-1}\circ T\circ S$. By Proposition \ref{conj} again, $\tilde{T}$ is hypercyclic with $z=S^{-1}(x)$ as a hypercyclic vector. Since $z\in C$, $\tilde{T}$ is also $C$-hypercyclic.
For the other way round, we need only use that hypercyclicity is preserved by conjugacy.

For the sufficiency, we need only remark that any operator is conjugate to itself.  
\end{proof}

We now re-state and prove Theorem~A.

\begin{theoA}Let $C$ be a subset of a separable Hilbert space $H$.
\begin{enumerate}
\item We assume that $C$ is contained in a finite dimensional subspace of $H$. Then $C$ is a hypercyclic subset if and only if $C\setminus \{0\}$ is non-empty and contained in a finite union of vector annuli.
\item If $C$ contains a sequence $(x_n)_n$ of linearly independent vectors satisfying
\begin{equation}\label{eq-codim-main-thm1}
\text{Codim}(\overline{\text{Span}}(x_n,\, n\geq 0))=\infty,
\end{equation}
then $C$ is not a hypercyclic subset.
\end{enumerate}
\end{theoA}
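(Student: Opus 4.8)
The plan is to reduce both parts of Theorem~A to the constructions of Section~\ref{Sec1} and to transport the counterexamples obtained there onto the fixed space $H$ by conjugation. We assume throughout that $H$ is infinite dimensional (if $\dim H<\infty$ there are no hypercyclic operators and the assertion is of a different, elementary nature). Two facts are used repeatedly: conjugacy preserves hypercyclicity and, by Proposition~\ref{conj}, $C$-hypercyclicity; and any linear isometry between two closed subspaces of infinite codimension in separable infinite dimensional Hilbert spaces extends to a unitary of the whole spaces, since the orthogonal complements are again separable and infinite dimensional.

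The ``if'' part of~(1) is essentially Theorem~\ref{thm2}. If $C\setminus\{0\}$ is non-empty and contained in $\bigcup_{i=1}^N[a_i,b_i]\T x_i$ with $x_i\neq 0$, I would rewrite $[a_i,b_i]\T x_i=[1,b_i/a_i]\T(a_ix_i)$ and put $b:=\max_i b_i/a_i\geq 1$, so that $C\setminus\{0\}\subset\bigcup_{i=1}^N[1,b]\T(a_ix_i)$; for a $C$-hypercyclic $T$ the set $\orb(\bigcup_{i=1}^N[1,b]\T(a_ix_i),T)$ contains the dense set $\orb(C\setminus\{0\},T)$, so Theorem~\ref{thm2} gives that some $a_ix_i$ is hypercyclic for $T$, whence $T$ is hypercyclic. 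For the ``only if'' part of~(1), suppose $C$ is a hypercyclic subset lying in a finite dimensional subspace of $H$ but, for contradiction, not contained in a finite union of vector annuli. Then $l:=\dim\Span(C)\geq 1$; fixing a basis $x_1,\ldots,x_l$ of $\Span(C)$ one writes $C=\Gamma_{x_1,\ldots,x_l}$ with $\Gamma\subset\C^l$ not contained in a finite union of vector annuli. By Proposition~\ref{only-if-main} there are a separable infinite dimensional Hilbert space $H''$, a non-hypercyclic $T''\in\LL(H'')$ and a linearly independent family $z_1,\ldots,z_l$ in $H''$ with $T''$ being $\Gamma_{z_1,\ldots,z_l}$-hypercyclic. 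Choosing a unitary $V\colon H''\to H$ and then, by the extension fact, an isomorphism $\Phi$ of $H$ with $\Phi(Vz_i)=x_i$ (possible since $\Span(Vz_1,\ldots,Vz_l)$ and $\Span(x_1,\ldots,x_l)$ both have infinite codimension in $H$), one gets $\Phi(\Gamma_{Vz_1,\ldots,Vz_l})=C$, so by Proposition~\ref{conj} the operator $\Phi V T''V^{-1}\Phi^{-1}$ is $C$-hypercyclic; being conjugate to $T''$ it is not hypercyclic, contradicting that $C$ is a hypercyclic subset.

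Part~(2), the main point, runs the same scheme with the $\ell^2(\N)$-version of the construction. Let $(x_n)_n\subset C$ be linearly independent with $\Codim_H\overline{\Span}(x_n,\,n\geq 0)=\infty$. I would fix an orthonormal basis $(y_m)_m$ of $\overline{\Span}(x_n,\,n\geq 0)$, let $U$ be the induced unitary from $\ell^2(\N)$ onto $\overline{\Span}(x_n,\,n\geq 0)$, and set $\mu_n:=U^{-1}(x_n)$, so that $(\mu_n)_n$ is a linearly independent family in $\ell^2(\N)$, in particular not contained in a finite union of vector annuli. Running on $\Gamma:=\{\mu_n\}$ the construction underlying Proposition~\ref{only-if-main-l2-reduced} (Lemma~\ref{lemred-inf-case} followed by Lemma~\ref{lemce-inf-case}; the first lemma passes to a subsequence of $(\mu_n)_n$, which we relabel and which corresponds to a subsequence of $(x_n)_n$ still contained in $C$) produces a separable Hilbert space $H'$, a non-hypercyclic $T'\in\LL(H')$ and an orthonormal family $(z_i)_i$ in $H'$ such that $T'$ is $\{v_n:n\geq 0\}$-hypercyclic, where $v_n:=\sum_i\mu_n^{(i)}z_i$ in the coordinates supplied by Lemma~\ref{lemred-inf-case}. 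The crucial point, flagged in Remark~\ref{rem-codim}, is that this construction forces $\overline{\Span}(z_i,\,i\geq 0)$, hence also $\overline{\Span}(v_n,\,n\geq 0)$, to have infinite codimension in $H'$. Since $\overline{\Span}(x_n,\,n\geq 0)$ has infinite codimension in $H$ by hypothesis, and the assignment $v_n\mapsto x_n$ extends to a unitary of $\overline{\Span}(v_n,\,n\geq 0)$ onto $\overline{\Span}(x_n,\,n\geq 0)$ (it is the composition of the unitary $v_n\mapsto\mu_n$ with $U$), the extension fact gives a unitary $\Phi\colon H'\to H$ with $\Phi(v_n)=x_n$ for all $n$. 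Then $R:=\Phi T'\Phi^{-1}$ is conjugate to $T'$, hence not hypercyclic, and is $\{x_n\}$-hypercyclic by Proposition~\ref{conj}, hence $C$-hypercyclic since $\{x_n\}\subset C$; so $C$ is not a hypercyclic subset.

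The only genuine obstacle is this codimension bookkeeping in Part~(2): the orthonormal family produced by Lemma~\ref{lemce-inf-case} unavoidably spans a subspace of \emph{infinite} codimension, so the transfer of $T'$ to $H$ is possible only when the target sequence in $C$ spans a subspace of infinite codimension as well, which is exactly the hypothesis of~(2) and the reason the method leaves open the almost overcomplete case. The remaining points — compatibility of the subsequence extracted in Lemma~\ref{lemred-inf-case} with the hypotheses of Lemma~\ref{lemce-inf-case}, the identification of $\overline{\Span}(v_n,\,n\geq 0)$ with a finite-codimension subspace of $\overline{\Span}(z_i,\,i\geq 0)$, and the harmless role of the point $0$ throughout — are routine.
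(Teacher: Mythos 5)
Your proposal is correct and follows essentially the same route as the paper: both parts reduce to the scalar-subset constructions of Section~\ref{Sec1} (Theorem~\ref{thm-main} for the finite dimensional case, Theorem~\ref{thm-main-l2} together with the infinite-codimension observation of Remark~\ref{rem-codim} for Part~(2)) and then transport the non-hypercyclic counterexample onto $H$ by an isomorphism fixing the relevant family, via Proposition~\ref{conj}. Your extra bookkeeping (the rescaling to a common $b$, the subsequence extracted by Lemma~\ref{lemred-inf-case} remaining inside $C$, and the extension of the partial unitary using the infinite codimension on both sides) matches what the paper does implicitly.
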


\begin{proof}(1) The if part is just Theorem \ref{thm2} and then has already been proven. The proof of the only if part is quite simple now and will be done by \emph{conjugacy}. We start by fixing a separable Hilbert space $H$ and a finite dimensional subset $C$ in $H$ which is not contained in a finite union of vector annuli.
Moreover, we suppose without loss of generality that $0\notin C$. We denote by $l$ the dimension of $\Span(C)$, choose a basis $(x_1,\ldots,x_l)$ of $\Span(C)$, and we denote by $F$ the isomorphism from $\Span(C)$ onto $\C^l$ which maps $x_i$ to $e_i$, $1\leq i\leq l$ (here again $e_1,\ldots,e_l)$ stands for the canonical basis of $\C^l$). We now define $\Gamma =F(C)\subset \C^l$ and observe that obviously $C=\Gamma _{x_1,\ldots,x_l}$.
Then $\Gamma$ is not included in a finite union of vector annuli in $\C^l$ (we already used such implication before the statement of Theorem \ref{thm2}) so that by Theorem \ref{thm-main}, there exist a separable Hilbert space $\tilde{H}$, an operator $\tilde{T}\in\LL(\tilde{H})$ and a linearly independent family $(z_1,\ldots,z_l)$ in $\tilde{H}$ such that $\tilde{T}$ is $\Gamma_{z_1,\ldots,z_l}$-supercyclic but not hypercyclic.
Since $(x_1,\ldots,x_l)$ and $(z_1,\ldots,z_l)$ are finite, there is a topological isomorphism $S$ from $\tilde{H}$ onto $H$ which maps $z_i$ to $x_i$ for every $1\leq i\leq l$. In particular we have
$$C=\Gamma _{x_1,\ldots,x_l}=S(\Gamma _{z_1,\ldots,z_l}).$$
To finish we define $T:=S\circ \tilde{T}\circ S^{-1}$ in $\LL({H})$, observe that $T$ and $\tilde{T}$ are conjugate to each other, and use Proposition \ref{conj} to get that $T$ is $C$-hypercyclic but not hypercyclic. This concludes the proof of (1).

(2) We need only prove that an infinite linearly independent sequence $(x_n)_n$ such that $\Codim(\overline{\Span}(x_n,\,n\geq 0))=\infty$ is not a hypercyclic subset. We fix such a sequence $(x_n)_n$ in some separable Hilbert space $H$. By assumption, there exists an orthonormal basis $(f_m)_{m\in \N}$ of $H$ such that $(f_{2m})_{m\in \N}$ is an orthonormal basis of $\overline{\Span}((x_n,\,n\geq 0))$. We then denote by $F$ the isomorphism from $\overline{\Span}((x_n,\,n\geq 0))$ onto $\ell^2(\N)$ which maps $f_{2m}$ to $e_m$, $m\in \N$ (here again $(e_m)_m$ stands for the canonical basis of $\ell^2(\N)$). We now define $\Gamma =F((x_n)_n)\subset \ell^2(\N)$ and observe that obviously $(x_n)_n=\Gamma _{(f_{2m})_{m}}$.

Since $(x_n)_n$ is linearly independent in $H$, $\Gamma$ is not contained in a finite union of vector annuli and, by Theorem \ref{thm-main-l2}, $\Gamma$ is not a hypercyclic scalar subset. So there exist a separable Hilbert space $\tilde{H}$, a non-hypercyclic operator $\tilde{T}\in \LL(H)$ and an orthonormal sequence $(\tilde{z_m})_{m\in \N} \subset \tilde{H}$ such that
\[
\aorb(\Gamma _{(\tilde{z_m})_m},T)=\tilde{H}.
\]
As mentioned in Remark \ref{rem-codim},
\[\Codim(\overline{\Span}(\tilde{z_m},\,m\geq 0)))=\Codim(\overline{\Span}(f_{2m},\,m\geq 0)))=\infty.
\]
Thus there exists a topological isomorphism $S$ from $\tilde{H}$ onto $H$ which maps $\tilde{z_m}$ to $f_{2m}$, $m\in \N$. In particular we have
\[(x_n)_n=\Gamma _{(f_{2m})_{m}}=S(\Gamma _{(\tilde{z_m})_{m}}).
\]
Finally we define $T:=S\circ \tilde{T}\circ S^{-1}$ in $\LL(H)$, observe that $T$ and $\tilde{T}$ are conjugate to each other, and use Proposition \ref{conj} to infer that $T$ is $(x_n)_n$-hypercyclic but not hypercyclic. This concludes the proof of (2).
\end{proof}

\begin{remark}The previous proof does not allow to extend Theorem~A to any given separable Banach space. The reason is simply that two given separable Hilbert spaces are always topologically isomorphic, what may not be the case for arbitrary separable Banach spaces.
\end{remark}

In the Hilbert setting, the only case that Theorem~A does not cover is that of those linearly independent sequences whose any subsequence spans a subspace with \emph{finite codimensional} closure. This kind of pathological objects are known as \emph{almost overcomplete} sequences.

\begin{definition}
A sequence in a Banach space is called \emph{overcomplete} (resp. \emph{almost overcomplete}) if the closed linear span of each of its subsequences has codimension $0$ (resp. has finite codimension).
\end{definition}

Such sequences have been for instance studied by Klee \cite{Klee1958}, and more recently by Fonf and Zanco, see \cite{Fonf2016,Fonf2014} and the references therein; see also \cite{Baranov2017,Chalendar2006}. We mention that Klee proved that every separable Banach space contains an overcomplete sequence. The second part of Theorem~A can be equivalently restated in terms of almost overcomplete sequences as follows.

\begin{corollary}\label{thmA-aocomplete}Let $C$ be a subset of a separable Hilbert space $H$. If $C$ contains a sequence $(x_n)_n$ of linearly independent vectors which is not an almost overcomplete sequence, then $C$ is not a hypercyclic subset.
\end{corollary}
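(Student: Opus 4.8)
The plan is to deduce the corollary directly from Theorem~A, Part~2, by unwinding the definition of an almost overcomplete sequence. First I would record the elementary observation that a sequence $(x_n)_n$ in a Banach space fails to be almost overcomplete precisely when there is a subsequence $(x_{n_k})_k$ whose closed linear span $\overline{\Span}(x_{n_k},\,k\geq 0)$ does \emph{not} have finite codimension; and since the codimension of a closed subspace of a separable Hilbert space takes values in $\N\cup\{\infty\}$, "not of finite codimension" is the same as $\Codim(\overline{\Span}(x_{n_k},\,k\geq 0))=\infty$.

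Next, suppose $C\subset H$ contains a linearly independent sequence $(x_n)_n$ that is not almost overcomplete, and extract a subsequence $(x_{n_k})_k$ with $\Codim(\overline{\Span}(x_{n_k},\,k\geq 0))=\infty$ as above. Being a subsequence of a linearly independent sequence, $(x_{n_k})_k$ is itself linearly independent, and clearly $(x_{n_k})_k\subset C$. Thus $C$ contains a linearly independent sequence satisfying \eqref{eq-codim-main-thm1}, and Theorem~A, Part~2, applies verbatim to conclude that $C$ is not a hypercyclic subset.

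Finally, for the asserted equivalence with Theorem~A, Part~2, the converse implication is immediate: if $C$ contains a linearly independent sequence $(x_n)_n$ with $\Codim(\overline{\Span}(x_n,\,n\geq 0))=\infty$, then $(x_n)_n$ is (trivially) a subsequence of itself witnessing that $(x_n)_n$ is not almost overcomplete, so the hypothesis of the corollary is satisfied. Hence the two statements carry exactly the same content. There is no real obstacle here; the only points needing a moment's care are the identification of "not of finite codimension" with "of infinite codimension" for closed subspaces and the fact that a subsequence of a linearly independent sequence is again linearly independent.
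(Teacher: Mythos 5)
Your proof is correct and is exactly the deduction the paper intends: the corollary is presented there as an immediate restatement of Theorem~A, Part~2, via the observation that a linearly independent sequence fails to be almost overcomplete precisely when some subsequence spans a closed subspace of infinite codimension. The two points you flag (finite/infinite dichotomy for the codimension of a closed subspace, and linear independence passing to subsequences) are indeed the only details to check, and you handle them correctly.
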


The first part of Theorem~A provides with various new examples of natural sets which are not hypercyclic subsets.

\begin{corollary}\label{coro-ex-thmA}Let $H$ be a separable Hilbert space. The following subsets of $H$ are not hypercyclic subsets.
\begin{itemize}\item A segment containing $0$;
\item A segment joining two linearly independent vectors;
\item More generally, any sets containing a finite dimensional continuous curve joining two linearly independent vectors;
\item Open sets or spheres with positive radius, as examples of sets of the previous type.
\end{itemize}
\end{corollary}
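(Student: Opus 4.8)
The plan is to deduce all four items from part~(1) of Theorem~A, using the single elementary observation that a finite union of vector annuli $\bigcup_{i=1}^N[a_i,b_i]\T z_i$ lies inside the union of the finitely many complex lines $\C z_i$ and, after discarding the terms with $z_i=0$ (which contribute only $0$), has all of its nonzero points at norm $\geq\min a_i\Vert z_i\Vert>0$; so the nonzero part of such a union is bounded away from $0$. The first and main step is the following key lemma: \emph{if $\gamma\colon[0,1]\to H$ is continuous, $\Gamma:=\gamma([0,1])$ is contained in a finite dimensional subspace, and $\gamma(0),\gamma(1)$ are linearly independent, then $\Gamma$ is not contained in a finite union of vector annuli}. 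Granting it, $\Gamma\setminus\{0\}\ni\gamma(0)\neq 0$ and Theorem~A(1) force $\Gamma$ not to be a hypercyclic subset. I would prove the lemma by a dichotomy. If $0\in\Gamma$, then $t\mapsto\Vert\gamma(t)\Vert$ is continuous and takes the values $0$ and $\Vert\gamma(0)\Vert>0$, so by the intermediate value theorem $\Gamma\setminus\{0\}$ contains points of arbitrarily small positive norm, contradicting the opening remark. If $0\notin\Gamma$ and $\Gamma\subseteq\bigcup_{m=1}^M\ell_m$ with $\ell_1,\dots,\ell_M$ the distinct complex lines among the $\C z_i$, then since distinct lines meet only at $0$ and each is closed, every $\ell_m\setminus\{0\}$ is relatively clopen in $\bigl(\bigcup_m\ell_m\bigr)\setminus\{0\}$; as $\Gamma$ is connected and avoids $0$ it lies in a single $\ell_m$, making $\gamma(0)$ and $\gamma(1)$ colinear --- a contradiction.

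The second step passes from $\Gamma$ to an arbitrary superset $C\supseteq\Gamma$. Since $\Gamma$ is not a hypercyclic subset while $\Gamma\setminus\{0\}\neq\emptyset$, Definition~\ref{def-hyp-set} yields some $T\in\LL(H)$ with $\orb(\Gamma,T)$ dense in $H$ but $T$ not hypercyclic; then $\orb(C,T)\supseteq\orb(\Gamma,T)$ is dense and $C\setminus\{0\}\supseteq\Gamma\setminus\{0\}\neq\emptyset$, so $T$ shows $C$ is not a hypercyclic subset. This settles the third item, and the second item is the special case where $\gamma$ is the affine parametrization of the segment, whose span is two dimensional.

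It remains to present open sets and spheres as instances of the third item, and to handle a segment through $0$ directly; these are routine as soon as $\dim H\geq 2$ (the only case with content, since no operator on a finite dimensional space is hypercyclic). For a nonempty open $U\subseteq H$, choose $p\in U\setminus\{0\}$ and a unit vector $h\perp p$; for small $\varepsilon>0$ the segment from $p$ to $p+\varepsilon h$ lies in $U$, and its endpoints are linearly independent, for $p+\varepsilon h=\mu p$ would give $\varepsilon=\varepsilon\langle h,h\rangle=(\mu-1)\langle p,h\rangle=0$. For a sphere $S$ of radius $\rho>0$ centered at $c$, I would use an arc of a planar circle of $S$: if $c=0$, the arc $\theta\mapsto\rho(\cos\theta\,e_1+\sin\theta\,e_2)$, $\theta\in[0,\pi/2]$, with $e_1,e_2$ orthonormal, joining the independent vectors $\rho e_1$ and $\rho e_2$; if $c\neq 0$, the arc $\theta\mapsto c+\rho e^{\imath\theta}v$, $\theta\in[0,\pi]$, with $v$ a unit vector outside $\C c$, joining $c+\rho v$ and $c-\rho v$, which are independent because $\{c,v\}$ is. Each such arc lies in $S$ and in a finite dimensional subspace, so the third item applies. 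Finally, a segment $C$ containing $0$ and not reduced to $\{0\}$ lies in a finite dimensional subspace, has $C\setminus\{0\}\neq\emptyset$, and contains points of arbitrarily small positive norm; by the opening remark it is not contained in a finite union of vector annuli, so Theorem~A(1) applies to it directly.

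The only point that is not simply a matter of exhibiting an explicit configuration is the clopen/connectedness dichotomy in the $0\notin\Gamma$ case of the key lemma, and even there the argument is elementary; I do not expect any genuine obstacle.
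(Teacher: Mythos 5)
Your proof is correct and follows exactly the route the paper intends: the corollary is stated there as an immediate consequence of Theorem~A(1), with no written proof, and your argument supplies the routine verifications (the IVT/connectedness dichotomy showing a curve joining independent vectors cannot sit in a finite union of vector annuli, monotonicity of ``not a hypercyclic subset'' under passing to supersets, and the explicit segments and arcs inside open sets and spheres). The only cosmetic point is that your dismissal of the low-dimensional case leans on the paper's implicit standing assumption that $H$ is infinite dimensional, which is also what Theorem~A itself requires.
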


\begin{remark}As far as we now, even the first example in Corollary \ref{coro-ex-thmA} is new. Indeed, \cite[Theorem A]{charpentier_-supercyclicity_2016} tells that a segment of the form $[a,b]x$ is a hypercyclic subset if $0\notin [a,b]$ and $x\neq 0$, and that $[a,b]$ is not a hypercyclic \emph{scalar} subset if $0\in [a,b]$, but we do not know a reference stating that given \emph{any} Hilbert space $H$, \emph{any} $x\in H$ non-zero, the set $[a,b]x$ is not a hypercyclic subset whenever it contains $0$.
\end{remark}

The second part of Theorem~A also gives rather nice examples of non hypercyclic subsets which are not covered by the first part. Among them, the most natural ones are probably infinite orthonormal families.

\begin{corollary}In a separable Hilbert space, an infinite orthonormal family is never a hypercyclic subset.
\end{corollary}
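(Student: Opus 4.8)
The plan is to derive this statement directly from the second part of Theorem~A (equivalently, from Corollary~\ref{thmA-aocomplete}), so essentially no new work is required. Let $(x_n)_{n\in\N}$ be an infinite orthonormal family in a separable Hilbert space $H$; since the $x_n$ are pairwise orthogonal and of norm one, they are in particular linearly independent, so $C:=\{x_n:n\in\N\}$ is a set containing an infinite linearly independent sequence. By Theorem~A(2) it therefore suffices to produce \emph{some} linearly independent subsequence of $(x_n)_n$ whose closed linear span has infinite codimension in $H$.

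Here one cannot simply invoke the whole family $(x_n)_n$: if it happens to be an orthonormal \emph{basis} of $H$, then $\overline{\Span}(x_n,\,n\ge 0)=H$ has codimension $0$, and Theorem~A does not apply to $C$ through the full sequence. The remedy is to pass to every other vector. First I would set $M:=\overline{\Span}(x_{2n},\,n\ge 0)$. Since each $x_{2m+1}$ is orthogonal to every $x_{2n}$, the vector $x_{2m+1}$ is orthogonal to $M$, so $M^{\perp}$ contains the infinite orthonormal family $(x_{2m+1})_{m\in\N}$ and is thus infinite dimensional; that is, $\Codim(M)=\dim(M^{\perp})=\infty$. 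Equivalently, the subsequence $(x_{2n})_n$ is linearly independent but not an almost overcomplete sequence.

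Having carried this out, the conclusion is immediate: $C$ contains the linearly independent sequence $(x_{2n})_n$ satisfying $\Codim(\overline{\Span}(x_{2n},\,n\ge 0))=\infty$, so Theorem~A, Part~2 (or, in the language of overcompleteness, Corollary~\ref{thmA-aocomplete}) yields that $C$ is not a hypercyclic subset. There is essentially no obstacle in this argument; the only point worth highlighting is precisely the reduction to a ``sparse'' subfamily, which is needed because an orthonormal basis spans the ambient space and hence has zero codimension, whereas discarding half of the vectors instantly produces the infinite codimension demanded by Theorem~A.
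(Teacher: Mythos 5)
Your argument is correct and is exactly the route the paper intends: the corollary is stated as an immediate consequence of Theorem~A, Part~2 (equivalently of the almost-overcomplete formulation), and the paper omits the details you supply. Your observation that one must pass to a sparse subsequence such as $(x_{2n})_n$ --- since an orthonormal \emph{basis} itself spans a subspace of codimension zero --- is precisely the small point that makes the deduction work, and your verification that $\Codim(\overline{\Span}(x_{2n},\,n\ge 0))=\infty$ via the orthogonal vectors $x_{2m+1}$ is sound.
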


This corollary is a satisfying answer to Feldman's question about countable hypercyclicity (see Question \ref{Quest4} in the introduction). We shall mention that the counterexample given in \cite[Exercise 6.3.3]{grosse-erdmann_linear_2011} is not an orthonormal family. Yet, in fact, Theorem~A gives a positive answer to Question \ref{Quest5}, that is a \emph{completely} positive answer to Question \ref{Quest4}. The reason is that almost overcomplete bounded sequences enjoy a very strong property, as shown by the following.

\begin{theorem}[Theorem 2.1 of \cite{Fonf2016}; see also Theorem 3.2 of \cite{Chalendar2006}]Each almost overcomplete bounded sequence in a separable Banach space is relatively norm-compact.
\end{theorem}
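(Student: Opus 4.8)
The plan is to proceed by contradiction. Assume $(x_n)_n$ is bounded and almost overcomplete but not relatively norm-compact; then some subsequence is separated, so after relabelling we may assume that $\|x_n-x_m\|\geq\delta>0$ whenever $n\neq m$, while $(x_n)_n$ --- and all its subsequences --- is still almost overcomplete. Everything rests on one elementary observation, which I would state first as a lemma: \emph{an almost overcomplete sequence has no basic subsequence}. Indeed, suppose $(x_{n_j})_j$ were basic and set $Z:=\overline{\Span}(x_{n_j}:j)$. On the one hand $Z$ has finite codimension in $X$, because $(x_{n_j})_j$ is a subsequence of the almost overcomplete sequence $(x_n)_n$. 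On the other hand, $(x_{n_j})_j$ being a Schauder basis of $Z$, the sub-subsequence $(x_{n_{2j}})_j$ satisfies $\overline{\Span}(x_{n_{2j}}:j)=\bigcap_{j\geq 1}\ker x_{n_{2j-1}}^{*}$, where the $x_{n_i}^{*}\in Z^{*}$ are the coordinate functionals; this is the common kernel of infinitely many linearly independent functionals, hence a subspace of infinite codimension in $Z$, and therefore of infinite codimension in $X$. But $(x_{n_{2j}})_j$ is again a subsequence of $(x_n)_n$, so $\overline{\Span}(x_{n_{2j}}:j)$ must have finite codimension in $X$ --- a contradiction, which proves the lemma.

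Next I would show that $(x_n)_n$ is relatively weakly compact. By Rosenthal's $\ell^{1}$-theorem, every subsequence of $(x_n)_n$ has either a subsequence equivalent to the unit vector basis of $\ell^{1}$, or a weakly Cauchy subsequence. The first alternative cannot occur, since an $\ell^{1}$-basis is basic, contradicting the lemma. Furthermore, any weakly Cauchy subsequence of $(x_n)_n$ must in fact be weakly convergent: otherwise it is a non-trivial weakly Cauchy sequence, hence (being non-relatively-compact) has a separated subsequence which, by a standard selection argument going back to Rosenthal, contains a basic subsequence --- again impossible by the lemma. Consequently every subsequence of $(x_n)_n$ admits a weakly convergent subsequence, and the Eberlein--\v{S}mulian theorem yields that $\{x_n:n\}$ is relatively weakly compact.

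To conclude, apply this to our separated sequence $(x_n)_n$: some subsequence $(y_k)_k$ converges weakly to a vector $\bar x\in X$. For every infinite $S\subset\N$ the subsequence $(y_k)_{k\in S}$ lies in the closed --- hence weakly closed --- subspace it spans, so $\bar x\in\overline{\Span}(y_k:k\in S)$; since translating a subsequence by an element of its own closed span alters codimension by at most $1$, the sequence $(y_k-\bar x)_k$ is again almost overcomplete. It is also bounded and weakly null, and $\inf_k\|y_k-\bar x\|>0$ (otherwise a subsequence of the separated sequence $(y_k)_k$ would converge to $\bar x$ in norm). By the Bessaga--Pe\l czy\'nski selection principle, $(y_k-\bar x)_k$ therefore has a basic subsequence, contradicting the lemma. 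Hence $(x_n)_n$ is relatively norm-compact.

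The genuinely load-bearing ingredient is the codimension dichotomy of the lemma; the delicate point in the execution is the passage through the case of weakly Cauchy but not weakly convergent subsequences in the second step, where one must invoke the (standard, but not completely elementary) fact that non-trivial weakly Cauchy sequences contain basic subsequences. One should also take care that almost overcompleteness really is preserved both under passing to a subsequence and under the translation by $\bar x$ used in the last step.
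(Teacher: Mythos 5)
The paper does not prove this statement at all --- it is imported as an external result from Fonf--Zanco (Theorem 2.1 of \cite{Fonf2016}) and Chalendar et al.\ --- so there is no internal proof to measure yours against. Your argument is, as far as I can check, a correct and essentially self-contained proof, and it follows the same basic-sequence strategy as the cited sources: the codimension dichotomy in your lemma (an almost overcomplete sequence admits no basic subsequence, because the closed span of the even-indexed terms of a basic sequence is the common kernel of infinitely many biorthogonal functionals and hence has infinite codimension) is exactly the load-bearing ingredient, and the reduction via Rosenthal's $\ell^1$-theorem, weak Cauchy subsequences, and the Bessaga--Pe\l czy\'nski selection principle applied to $(y_k-\bar x)_k$ is the standard route. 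Two remarks. First, one parenthetical justification in your second step is wrong as stated: a bounded separated sequence need \emph{not} contain a basic subsequence (in $\ell^2$ the sequence $(e_1+e_n)_{n\geq 2}$ is bounded and separated, yet no subsequence is basic, since $e_1$ lies in the weakly closed span of every subsequence but admits no expansion along it), so the chain ``not relatively compact $\Rightarrow$ separated subsequence $\Rightarrow$ basic subsequence'' is broken. What saves the step is the fact you correctly name at the end: a \emph{non-trivial} weakly Cauchy sequence has a basic subsequence because its weak* limit lies in $X^{**}\setminus X$ (this is the standard selection theorem, e.g.\ Albiac--Kalton, Theorem 1.5.6); you should invoke that directly rather than separatedness. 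Second, the detour through relative weak compactness and Eberlein--\v{S}mulian is unnecessary: for the final step you only need \emph{one} weakly convergent subsequence of the separated sequence, and Rosenthal's theorem together with the exclusion of non-trivial weak Cauchy subsequences already supplies it. Neither point affects the correctness of the argument.
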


This theorem implies that a bounded infinite separated sequence in a Banach space cannot be almost overcomplete. Moreover, by compactness, it cannot be contained in any finite dimensional subspace of $H$. Thus it needs to contain a linearly independent sequence which is not an almost overcomplete sequence. Therefore, by Corollary \ref{thmA-aocomplete}, we get:

\begin{corollary}A bounded separated sequence in a separable Hilbert space is a hypercyclic subset if and only if it is finite and not reduced to $\{0\}$.
\end{corollary}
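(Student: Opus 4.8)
The plan is to establish the two implications separately; throughout I write $C=\{x_n\}_n$ for the set of terms of the sequence, \emph{separated} meaning that there is $\delta>0$ with $\|x_n-x_m\|\geq\delta$ for all $n\neq m$.

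For the implication that $C$ is a hypercyclic subset whenever $C$ is finite and $C\setminus\{0\}\neq\emptyset$, I would simply invoke Part~(1) of Theorem~A: a finite $C$ is contained in the finite dimensional subspace $\Span(C)$, and each nonzero $x\in C$ belongs to the vector annulus $[1,1]\T x=\T x$, so $C\setminus\{0\}$ is (nonempty and) contained in a finite union of vector annuli. (Equivalently, it follows directly from Theorem~\ref{thm2} with $b=1$, since density of $\orb(C,T)$ forces density of $\orb\big(\bigcup_{x\in C\setminus\{0\}}[1,1]\T x,T\big)$, whence some $x\in C$ is hypercyclic for $T$.)

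For the reverse implication I would argue the contrapositive: an \emph{infinite} bounded $\delta$-separated set $C=\{x_n\}_n$ is never a hypercyclic subset. First, any subsequence of $(x_n)_n$ is again $\delta$-separated, hence has no convergent subsequence, so $C$ is not relatively norm-compact. By the Fonf-Zanco compactness theorem quoted above (Theorem~2.1 of \cite{Fonf2016}) — a bounded almost overcomplete sequence in a separable Banach space is relatively norm-compact — the sequence $(x_n)_n$ is therefore not almost overcomplete, so there is a subsequence $(x_{n_k})_k$ with $\Codim\big(\overline{\Span}(x_{n_k},\,k)\big)=\infty$. That subsequence is itself bounded and $\delta$-separated, hence not relatively compact, hence not contained in any finite dimensional subspace of $H$; consequently its linear span is infinite dimensional, and selecting a maximal linearly independent subfamily $(y_j)_j$ of $(x_{n_k})_k$ produces an \emph{infinite} linearly independent sequence contained in $C$ whose closed linear span coincides with $\overline{\Span}(x_{n_k},\,k)$ and thus has infinite codimension. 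In particular $(y_j)_j$ is linearly independent and not almost overcomplete, so Corollary~\ref{thmA-aocomplete} applies and shows that $C$ is not a hypercyclic subset.

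I expect the only genuinely nonelementary ingredient to be the Fonf-Zanco compactness theorem: it is what converts the soft metric hypothesis that $C$ is separated into the structural statement that $C$ is not almost overcomplete, which is precisely the hypothesis on which Corollary~\ref{thmA-aocomplete} (and, behind it, Theorem~A) operates. Everything else — stability of separation under passing to subsequences, ruling out finite dimensional linear spans by non-compactness, and extracting a maximal linearly independent subfamily — is routine bookkeeping.
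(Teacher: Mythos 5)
Your proposal is correct and follows essentially the same route as the paper: the forward direction is Theorem~A(1) applied to a finite set, and the reverse direction combines the Fonf--Zanco compactness theorem (an infinite bounded separated sequence is not relatively compact, hence not almost overcomplete) with Corollary~\ref{thmA-aocomplete}. Your extra bookkeeping --- extracting the subsequence with infinite-codimensional closed span and then a maximal linearly independent subfamily --- just makes explicit the step the paper compresses into ``it needs to contain a linearly independent sequence which is not almost overcomplete.''
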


\medskip{}
Now, a natural question is whether any hypercyclic subset contains hypercyclic vectors. Theorem~A together with Theorem \ref{thm2} gives an answer in the finite dimensional case.
\begin{corollary}Let $C$ be a subset of a separable Hilbert space $H$. We assume that $C$ is finite dimensional. If $C$ is a hypercyclic subset, then any $C$-hypercyclic operator admits a hypercyclic vector in $C$.
\end{corollary}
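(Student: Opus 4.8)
The plan is to combine the structural description of finite dimensional hypercyclic subsets provided by the first part of Theorem~A with Theorem~\ref{thm2} and with the elementary observation that the set of hypercyclic vectors of a linear operator is stable under multiplication by non-zero scalars. In a nutshell, Theorem~\ref{thm2} already produces a hypercyclic vector among the ``generators'' of the finitely many vector annuli covering $C$, and the scalar invariance then upgrades this into a hypercyclic vector lying in $C$ itself.

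Concretely, first I would invoke the first part of Theorem~A: since $C$ is a finite dimensional hypercyclic subset, $C\setminus\{0\}$ is non-empty and there are $x_1,\ldots,x_N\in H$ and reals $0<a_i\le b_i<\infty$ such that $C\setminus\{0\}\subset\bigcup_{i=1}^N[a_i,b_i]\T x_i$. Let $T$ be a $C$-hypercyclic operator and set $C_i:=C\cap[a_i,b_i]\T x_i$. Discarding the indices with $C_i=\emptyset$ and relabelling, we may assume $C_i\neq\emptyset$ for $1\le i\le M$, so that $C\setminus\{0\}=\bigcup_{i=1}^M C_i$. Since $\{0\}$ is not open in $H$, $\text{Orb}(C\setminus\{0\},T)$ is still dense (it differs from the dense set $\text{Orb}(C,T)$ by at most the point $0$). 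Next I would normalize exactly as in the sufficiency part of the vector annulus results: put $y_i:=a_ix_i$ and $b:=\max_{1\le i\le M}b_i/a_i\ge 1$, so that $C_i\subset[1,b]\T y_i$ and hence
\[
\text{Orb}\!\left(\bigcup_{i=1}^M[1,b]\T y_i,\,T\right)\ \supset\ \text{Orb}\!\left(\bigcup_{i=1}^M C_i,\,T\right)=\text{Orb}(C\setminus\{0\},T)
\]
is dense in $H$. By Theorem~\ref{thm2}, some $y_{i_0}$ with $1\le i_0\le M$ is hypercyclic for $T$, hence so is $x_{i_0}=a_{i_0}^{-1}y_{i_0}$.

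To conclude, I would use that if $v$ is hypercyclic for $T$ and $\lambda\in\C\setminus\{0\}$, then $\lambda v$ is hypercyclic for $T$: indeed $\text{Orb}(\lambda v,T)=\lambda\,\text{Orb}(v,T)$ by linearity of $T$, and $w\mapsto\lambda w$ is a homeomorphism of $H$. Applied to $v=x_{i_0}$, this shows that every element of $[a_{i_0},b_{i_0}]\T x_{i_0}$, being a non-zero scalar multiple of $x_{i_0}$, is a hypercyclic vector for $T$; in particular every element of $C_{i_0}\subset[a_{i_0},b_{i_0}]\T x_{i_0}$ is. Since $C_{i_0}$ is a non-empty subset of $C$, this yields a hypercyclic vector for $T$ inside $C$. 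The argument has no serious obstacle: the content is entirely carried by Theorem~\ref{thm2}, and the only points requiring a little care are the bookkeeping — discarding the empty pieces $C_i$ before applying Theorem~\ref{thm2}, so that the resulting index $i_0$ satisfies $C_{i_0}\neq\emptyset$ — and the harmless passage from $[a_i,b_i]\T x_i$ to $[1,b]\T y_i$.
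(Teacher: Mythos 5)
Your proposal is correct and follows essentially the same route as the paper: cover $C\setminus\{0\}$ by finitely many vector annuli via Theorem~A, rescale to apply Theorem~\ref{thm2} and obtain a hypercyclic generator $x_{i_0}$, then use invariance of hypercyclic vectors under non-zero scalar multiplication to land back inside $C$. The only cosmetic difference is that the paper arranges a single common annulus $[a,b]\T$ and a minimality assumption on the covering where you instead discard the annuli meeting $C$ trivially; both devices serve the same purpose of guaranteeing $C_{i_0}\neq\emptyset$.
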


\begin{proof}Since $C$ is a hypercyclic subset, Theorem~A gives $N\geq 1$, $0<a\leq b<\infty$ and $x_1,\ldots,x_N$ in $H$ such that
$$C\subset \bigcup_{i=1}^N[a,b]\T x_i.$$
We can assume that
\begin{equation}\label{eq-coro-2}C\not\subset \bigcup_{\substack{i=1\\i\neq j}}^N[a,b]\T x_i
\end{equation}
for any $1\leq j\leq N$. By Theorem \ref{thm2}, $\lambda x_i$ is hypercyclic for $T$ for some $1\leq i \leq N$ and any $\lambda \neq 0$. Now by \eqref{eq-coro-2} there is some $\lambda \neq 0$ such that $\lambda x_i \in C$.
\end{proof}

\medskip{}
We finish this paragraph by telling that, as in the proof of Theorem~A, and without any extra difficulties, a conjugacy argument can be combined to Theorem \ref{thm-BF-scalar} in order to get a characterization of finite dimensional Bourdon-Feldman subsets. Moreover, since Bourdon-Feldman subsets are also hypercyclic subsets, the second point is clear by Theorem~A. We leave the details to the reader and re-state Theorem~B.

\begin{theoB}Let $C$ be a subset of a separable Hilbert space $H$.
	\begin{enumerate}
		\item If $C$ is finite dimensional, $C$ is a Bourdon-Feldman subset if and only if $C\setminus\{0\}$ is non-empty and if there exists $x_1,\ldots,x_N$ in $X$ and $\Gamma _1,\ldots,\Gamma _N$ subsets of $\C$, with $\Gamma _i\setminus \{0\}$ bounded and bounded away from $0$ and $\Gamma _i\T$ nowhere dense in $\C$ for every $i\in\{1,\ldots,N\}$, such that
		\[
		C\subset \bigcup _{i=1}^N\Gamma _ix_i.
		\]
		\item If $C$ is infinite dimensional and if $C$ contains a sequence $(x_n)_n$ of linearly independent elements satisfying \[\Codim(\overline{\Span}(x_n,\,n\geq 0))=\infty,\]
		then $C$ is not a Bourdon-Feldman subset.
	\end{enumerate}
\end{theoB}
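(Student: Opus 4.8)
The plan is to deduce Part~(2) from Theorem~A and to obtain Part~(1) by feeding Theorem~\ref{thm-BF-scalar} into the same conjugacy machinery used for Theorem~A, Part~(1). Part~(2) is immediate: since density implies somewhere density, every Bourdon-Feldman subset is in particular a hypercyclic subset --- if $\orb(C,T)$ is dense it is a fortiori somewhere dense, so the defining property of a Bourdon-Feldman subset makes $T$ hypercyclic. Consequently, if $C$ contains a linearly independent sequence $(x_n)_n$ with $\Codim(\overline{\Span}(x_n,\,n\geq0))=\infty$, then Theorem~A, Part~(2), says $C$ is not a hypercyclic subset, hence not a Bourdon-Feldman subset. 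For the ``if'' direction of Part~(1) I would simply rerun the ``if'' part of Theorem~\ref{thm-BF-scalar} over the fixed space $H$: if $C\setminus\{0\}\neq\emptyset$ and $C\subset\bigcup_{i=1}^N\Gamma_ix_i$ with each $\Gamma_i\setminus\{0\}$ bounded and bounded away from $0$ and each $\Gamma_i\T$ nowhere dense, and if $\orb(C,T)$ is somewhere dense in $H$, then the larger set $\orb\bigl(\bigcup_{i=1}^N\Gamma_ix_i,T\bigr)=\bigcup_{i=1}^N\orb(\Gamma_ix_i,T)$ is somewhere dense, so one of the finitely many closed sets $\aorb(\Gamma_{i_0}x_{i_0},T)$ has non-empty interior; by \cite[Theorem B]{charpentier_-supercyclicity_2016} (which characterises the Bourdon-Feldman scalar subsets of $\C$) this forces $\aorb(x_{i_0},T)=H$, i.e.\ $T$ is hypercyclic. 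Thus $C$ is a Bourdon-Feldman subset.

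The substance is the ``only if'' direction of Part~(1), which I would prove by contraposition, in exact analogy with the proof of Theorem~A, Part~(1). Suppose $C$ is finite dimensional and not of the stated form; if $C\setminus\{0\}=\emptyset$ there is nothing to prove, so assume $C\setminus\{0\}\neq\emptyset$ and, since discarding $0$ changes neither the hypothesis nor the conclusion, that $0\notin C$. Put $l:=\dim\Span(C)\geq1$, fix a basis $(x_1,\ldots,x_l)$ of $\Span(C)$ and let $F\colon\Span(C)\to\C^l$ be the isomorphism with $F(x_i)=e_i$; then $C=\Gamma_{x_1,\ldots,x_l}$ with $\Gamma:=F(C)\subset\C^l$. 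Since pulling back by $F^{-1}$ any covering $\Gamma\setminus\{0\}\subset\bigcup_i\Gamma_ig_i$, $g_i\in\C^l$, would yield a covering of $C$ of the forbidden shape, $\Gamma$ is not contained in any such finite union with $\Gamma_i\setminus\{0\}$ bounded and bounded away from $0$ and $\Gamma_i\T$ nowhere dense; hence, by Theorem~\ref{thm-BF-scalar}, $\Gamma$ is not a Bourdon-Feldman scalar subset. Reading off the constructions in the proof of Theorem~\ref{thm-BF-scalar} (and, through it, of Proposition~\ref{only-if-main}), one obtains a separable Hilbert space $\tilde H$, a \emph{non-hypercyclic} operator $\tilde T\in\LL(\tilde H)$ and a linearly independent family $(z_1,\ldots,z_l)$ in $\tilde H$ such that $\orb(\Gamma_{z_1,\ldots,z_l},\tilde T)$ is somewhere dense in $\tilde H$. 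As $\tilde H$ and $H$ are isomorphic separable Hilbert spaces, there is a topological isomorphism $S\colon\tilde H\to H$ with $S(z_i)=x_i$ for $1\leq i\leq l$; setting $T:=S\tilde TS^{-1}$, the operator $T$ is conjugate to $\tilde T$, hence non-hypercyclic, while $\orb(C,T)=\orb(S(\Gamma_{z_1,\ldots,z_l}),T)=S\bigl(\orb(\Gamma_{z_1,\ldots,z_l},\tilde T)\bigr)$ is somewhere dense in $H$ because $S$ is a homeomorphism. Therefore $C$ is not a Bourdon-Feldman subset, which completes the contrapositive.

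The delicate point --- the one I would spell out carefully --- is the extraction of a \emph{non-hypercyclic Hilbert-space} witness, equipped with a linearly independent family of size exactly $l=\dim\Span(C)$, from the mere failure of $\Gamma$ to be a Bourdon-Feldman scalar subset. The bare definition only supplies \emph{some} Banach space, operator and family for which somewhere density does not propagate, which is too weak for the conjugacy transfer. One must go back into the proof of Theorem~\ref{thm-BF-scalar}: when $\Gamma\setminus\{0\}$ is a finite union $\bigcup\Gamma_ig_i$ with some $\Gamma_{i_0}\T$ somewhere dense the relevant operator is $e^{\imath\theta}\oplus B_w$ on $\C\oplus\ell^2(\Z)$, and otherwise it is one of the weighted-shift operators of Proposition~\ref{only-if-main}; in either case the operator is non-hypercyclic, acts on a separable Hilbert space, and is accompanied by an orthonormal (in particular linearly independent) family of exactly $l$ vectors --- precisely the data needed to build $S$ above. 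Everything else is routine bookkeeping with conjugacy, exactly as in the proof of Theorem~A.
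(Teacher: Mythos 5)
Your proposal is correct and follows essentially the same route as the paper, which itself only sketches this proof: Part (2) via the observation that a Bourdon-Feldman subset is in particular a hypercyclic subset combined with Theorem~A, and Part (1) by running the conjugacy argument of Theorem~A with Theorem \ref{thm-BF-scalar} in place of Theorems \ref{thm-main}/\ref{thm-main-l2}. You also correctly flag and resolve the one point the paper leaves implicit, namely that the witness to the failure of the scalar-subset property must be read off from the \emph{proof} of Theorem \ref{thm-BF-scalar} (a non-hypercyclic operator on a separable Hilbert space with a linearly independent family of the right size) rather than from its bare statement.
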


This theorem provides with examples of hypercyclic subsets which are not Bourdon-Feldman subsets (for \emph{e.g.} the sets of the form $[a,b]\T x$ with $0<a < b < \infty$ and $x\in H\setminus \{0\}$).

\section{Open questions}

Regarding to Theorem~A, an answer to the following question would provide with a complete description of hypercyclic subsets in separable Hilbert spaces.

\begin{question}Are almost overcomplete sequences hypercyclic subsets?
\end{question}

The authors think that this question has a negative answer but the technical obstruction mentioned in Remark \ref{rem-codim} makes our construction probably inefficient.

\medskip{}
A description of hypercyclic subsets in Banach or Fr\'echet spaces still remains unknown.

\begin{question}\label{question-general}
Is there a characterization of hypercyclic subsets in some/any Banach or Fr\'echet spaces?
\end{question}

We recall that Feldman \cite{feldman_countably_2003} proved that if $T$ is a unilateral weighted shift then $T$ is hypercyclic if and only if there exists a bounded set having dense $T$-orbit. This is a partial answer to Question \ref{question-general} for $T$ in some specific class.

\medskip{}
Related to $\Gamma$-supercyclicity with $\Gamma\subset \C^l$ is the notion of $n$-supercyclicity introduced by Feldman \cite{feldman_$n$-supercyclic_2002} in 2002. We recall that an operator $T\in \LL(X)$ is said to be $n$-supercyclic, $n\geq 1$, if there exists an $n$-dimensional subspace $E$ of $X$ such that $T$ is $E$-hypercyclic. In particular any $\Gamma$-supercyclic operator, $\Gamma \subset \C^l$, is $l$-supercyclic. So a general problem is the following.

\begin{question}\label{question-l-super}
Given $n,l$ two positive integers, is it possible to describe those subsets $\Gamma\subset \C^l$ for which an operator is $n$-supercyclic if and only if it is $\Gamma$-supercyclic?
\end{question}

Since Feldman proved in \cite{feldman_$n$-supercyclic_2002} that $n$-supercyclicity and $(n-1)$-supercyclicity differ, the above problem need only be considered for $n\leq l$. Actually this question has already been attacked for $n=l=1$, in which situation no complete solution has been formulated, see \cite{charpentier_-supercyclicity_2016} and the references therein.

\medskip{}
In connection with Theorem B, the following question makes also sense.

\begin{question}Is it possible to characterize those subsets $C$ of a separable Banach space $X$ such that the somewhere density in $X$ of $\orb(C,T)$ implies its everywhere density?
\end{question}

\medskip{}
The interactions between linear dynamics and ergodic theory is of great interest in linear dynamics. This is represented by the notions of frequent hypercyclicity and $\mathcal{U}$-frequent hypercyclicity introduced by Bayart and Grivaux \cite{bayart_frequently_2006} and Shkarin \cite{shkarin_spectrum_2009}, respectively.

\begin{question}What can be said about frequent or $\mathcal{U}$-frequent hypercyclicity?
\end{question}

\section*{Acknowledgements}The authors gratefully acknowledge the referee for his careful reading of the manuscript. We would like to thank Richard Aron, Anton Baranov, Manuel Maestre and Vassili Nestoridis for helpful discussions and suggestions. The first author would also like to thank the Laboratoire de Math\'ematiques Pures et Appliqu\'ees de l'Universit\'e du Littoral C\^ote d'Opale for its kind invitation and its financial support. This project was partly supported by the grant ANR-17-CE40-0021 of the French National Research Agency ANR (project Front).

\bibliographystyle{plain}
\bibliography{biblio-jabref}

\end{document}